\def\@cite#1#2{{\m@th\upshape\bfseries%
[{#1\if@tempswa{\m@th\upshape\mdseries, #2}\fi}]}}
\theoremstyle{plain}
\newtheorem{thm}{Theorem}[section]
\newtheorem{cor}[thm]{Corollary}
\newtheorem{lem}[thm]{Lemma}
\theoremstyle{definition}
\newtheorem{defn}[thm]{Definition}
\newtheorem{exercise}[thm]{Exercise}
\theoremstyle{remark}
\newtheorem{rem}[thm]{Remark}
\numberwithin{equation}{subsection}
\renewcommand{\bold}[1]{\medskip \noindent {\bf #1 }\nopagebreak}
\newcommand{\nc}{\newcommand}
\newcommand{\rnc}{\renewcommand}
\newcommand{\e}{\varepsilon}
\nc\bA{\mathbb{A}}
\nc\bB{\mathbb{B}}
\nc\bC{\mathbb{C}}
\nc\bD{\mathbb{D}}
\nc\bE{\mathbb{E}}
\nc\bF{\mathbb{F}}
\nc\bG{\mathbb{G}}
\nc\bH{\mathbb{H}}
\nc\bI{\mathbb{I}}
\nc{\bJ}{\mathbb{J}} 
\nc\bK{\mathbb{K}}
\nc\bL{\mathbb{L}}
\nc\bM{\mathbb{M}}
\nc\bN{\mathbb{N}}
\nc\bO{\mathbb{O}}
\nc\bP{\mathbb{P}}
\nc\bQ{\mathbb{Q}}
\nc\bR{\mathbb{R}}
\nc\bS{\mathbb{S}}
\nc\bT{\mathbb{T}}
\nc\bU{\mathbb{U}}
\nc\bV{\mathbb{V}}
\nc\bW{\mathbb{W}}
\nc\bY{\mathbb{Y}}
\nc\bX{\mathbb{X}}
\nc\bZ{\mathbb{Z}}
\nc\cA{\mathcal{A}}
\nc\cB{\mathcal{B}}
\nc\cC{\mathcal{C}}
\rnc\cD{\mathcal{D}}
\nc\cE{\mathcal{E}}
\nc\cF{\mathcal{F}}
\nc\cG{\mathcal{G}}
\rnc\cH{\mathcal{H}}
\nc\cI{\mathcal{I}}
\nc{\cJ}{\mathcal{J}} 
\nc\cK{\mathcal{K}}
\rnc\cL{\mathcal{L}}
\nc\cM{\mathcal{M}}
\nc\cN{\mathcal{N}}
\nc\cO{\mathcal{O}}
\nc\cP{\mathcal{P}}
\nc\cQ{\mathcal{Q}}
\rnc\cR{\mathcal{R}}
\nc\cS{\mathcal{S}}
\nc\cT{\mathcal{T}}
\nc\cU{\mathcal{U}}
\nc\cV{\mathcal{V}}
\nc\cW{\mathcal{W}}
\nc\cY{\mathcal{Y}}
\nc\cX{\mathcal{X}}
\nc\cZ{\mathcal{Z}}
\nc{\dmo}{\DeclareMathOperator}
\rnc{\Re}{\operatorname{Re}}
\rnc{\Im}{\operatorname{Im}}
\rnc{\span}{\operatorname{span}}
\dmo{\rank}{rank}
\dmo{\End}{End}
\dmo{\Hom}{Hom}
\dmo{\Jac}{Jac}
\dmo{\Id}{Id}
\dmo{\Ann}{Ann}
\dmo{\Area}{Area}
\dmo{\CP}{\bC P^1}
\dmo{\vol}{Vol}
\dmo{\ML}{\mathcal{ML}}
\dmo{\MF}{\mathcal{MF}}
\nc{\Shear}{\operatorname{Shear}}
\title{Mirzakhani's work on earthquake flow}
\author[Wright]{Alex~Wright}
\begin{document}
\maketitle
\thispagestyle{empty}

\begin{abstract}
The Teichm\"uller unipotent flow can be defined concretely on certain moduli spaces of singular flat surfaces   by shearing polygonal presentations of the surfaces. Thurston's earthquake flow on moduli spaces of hyperbolic surfaces is  more mysterious. Both flows have deep and important connections to other areas of mathematics. 

In this expository survey we give a geometric account of the main ideas behind Mirzakhani's  theorem relating these two flows. Our presentation avoids some technical prerequisites that featured in the original more analytic presentation.
\end{abstract}

\setcounter{tocdepth}{1} 
\tableofcontents



\section{Introduction}

\bold{Flat geometry.} One entry point to Mirzakhani's work is the $SL(2,\bR)$ action on moduli spaces of Abelian or quadratic differentials. Due to its close connection to Teichm\"uller theory, applications to topics such as rational billiards, and deep analogy to Lie group actions on homogeneous spaces, this action has been extensively studied; see for example the author's short survey \cite{FBTMS} for an introduction. 

Part of the appeal of this action is that it is easily defined. Abelian and quadratic differentials can be presented via polygons in $\bR^2$, with edges identified in pairs to define a surface with a singular flat metric. The $SL(2,\bR)$ action arises quite simply from the usual $SL(2,\bR)$ action on $\bR^2$, which linearly transforms one polygonal presentation into another.

\bold{Hyperbolic geometry.} The other entry point is the earthquake flow defined on the bundle of measured laminations over a moduli space of hyperbolic surfaces. This flow is only easy to visualize when the measured lamination is a closed curve, in which case the result of the earthquake is obtained by cutting the geodesic representative of the curve and re-gluing with a twist. More generally, the measured lamination represents the ``fault lines" along which a ``sliding" or ``shearing" deformation occurs. When the lamination has a fractal structure, infinitesimal shearing along each individual ``fault line" can combine to a definite change to the surface. 

Although the definition of earthquake flow is less elementary, its applications and connections are far reaching. Early in its history it was used prominently by Kerckhoff to resolve the longstanding Nielsen Realization Problem about  mapping class groups \cite{K}. More recently, Mirzakhani and others used it to prove equidistribution results in the moduli space of hyperbolic surfaces \cite{RHS, AR0}, which are a crucial tool for example in one of Mirzakhani's results on counting geodesics on individual surfaces  \cite{mirzakhani2016counting}.

\bold{Mirzakhani's bridge between hyperbolic and flat geometry.} Although defined on different spaces and featuring different notions of geometry, the two flows have similarities. Both involve some notion of shearing, and both have analogies to unipotent flows on homogeneous spaces. Both have strong non-divergence properties, established simultaneously in a single paper by  Minsky and Weiss  \cite{MW2}. As discussed in Section \ref{S:Ham}, both are Hamiltonian with respect to natural functions and natural symplectic forms. 

The purpose of this survey is to give an expository introduction to Mirzakhani's discovery that, in an abstract measurable sense, both flows are the actually the same. This is the main result of \cite{M}.

\begin{thm}\label{T:main}
There is a measurable mapping class group equivariant conjugacy $F$ between the earthquake flow $(\lambda, X)\mapsto (\lambda, E_{t\lambda}(X))$ on the bundle $\ML\times \cT_g$ of measured laminations over Teichm\"uller space $\cT_g$ and 
the Teichm\"uller unipotent flow action of 
$$u_t=\left(\begin{array}{cc} 1& t\\ 0 & 1\end{array}\right)$$
on the bundle $QD$ of nonzero quadratic differentials  over Teichm\"uller space.
\end{thm}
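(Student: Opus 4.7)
The plan is to construct $F$ explicitly via Thurston's horocyclic measured foliations and to reduce equivariance to two local computations: how earthquakes act on horocyclic foliations, and how $u_t$ acts on the pair of transverse measured foliations associated to a quadratic differential.

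On a conull Borel subset of $\ML$, choose measurably a completion of the support of each $\mu$ to a maximal geodesic lamination $L=L(\mu)$. For $X\in\cT_g$, let $F_{L}(X)\in\MF$ denote the associated horocyclic measured foliation: in each ideal triangle complementary to $L$, inscribe pieces of horocycles based at the three ideal vertices and glue along the leaves of $L$ to obtain a measured foliation of $X$ transverse to $L$. The pair $(\mu,F_L(X))$ consists of transverse measured foliations that fill, so by Hubbard--Masur and Gardiner--Masur there is a unique quadratic differential $q\in QD$ having $\mu$ and $F_L(X)$ as its horizontal and vertical measured foliations. Define $F(\mu,X):=q$.

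The core geometric input is Thurston's computation that earthquake along $\mu$ shifts the horocyclic foliation by adding $t\mu$ to its transverse measure:
$$F_{L}\bigl(E_{t\mu}(X)\bigr) \;=\; F_L(X) + t\mu.$$
This comes from the fact that the earthquake preserves $L$ and the complementary ideal triangles, simply shearing them relative to one another by the transverse measure of $\mu$, so the horocyclic transverse measure of any arc picks up exactly $t$ times its $\mu$-measure. On the quadratic differential side, writing $q$ in its flat coordinate $z=x+iy$, the horizontal foliation (transverse measure $|dy|$) is invariant under $u_t$, while the vertical transverse measure becomes $|dx+t\,dy|$, which as a measured foliation equals $v+th$. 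Translating back through Hubbard--Masur, $u_t\cdot q$ corresponds to the pair $(\mu, F_L(X)+t\mu)$, and hence $F(\mu,E_{t\mu}X)=u_t\cdot F(\mu,X)$.

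Each ingredient in the construction (the measurable choice $L(\mu)$, the horocyclic construction, and Hubbard--Masur) is Borel on a conull set, so $F$ is measurable. Injectivity on a conull set follows from Thurston's shearing-coordinate parametrization of $\cT_g$ by transverse cocycles on $L$, which reconstructs $X$ from $F_L(X)$ given $L$; essential surjectivity follows because a conull set of quadratic differentials has uniquely ergodic horizontal foliation of maximal support. A comparison of Thurston measure times Weil--Petersson measure on $\ML\times\cT_g$ with the Masur--Veech class on $QD$ then upgrades $F$ to a measurable conjugacy. The main obstacle is the horocyclic equivariance coupled with the measurable choice of completion $L(\mu)$: one must control the horocyclic foliation near the isolated leaves that must be added when $\operatorname{supp}(\mu)$ is not itself maximal, and verify that the shift $F_L(E_{t\mu}X)=F_L(X)+t\mu$ persists when the earthquake is along only a sub-lamination of $L$.
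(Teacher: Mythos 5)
Your construction is essentially the one in the paper: define $F(\mu,X)$ to be the quadratic differential with horizontal foliation $\mu$ and vertical foliation the Thurston horocyclic foliation, and reduce equivariance to the fact that earthquaking in $\mu$ shifts the horocyclic data by $t\mu$. Your identity $F_L(E_{t\mu}X)=F_L(X)+t\mu$ is the global, transverse-cocycle form of what the paper calls the Fundamental Lemma ($\Shear_{E_{t\lambda}(X)}(T_1,T_2)=\Shear_X(T_1,T_2)+t\lambda(A)$), which the paper then converts into the statement $\dot x=y$, $\dot y=0$ for period coordinates of L-shaped paths between singularities; the two formulations carry the same content, though if you state the additive version you should say explicitly that the sum is taken in the linear structure on $\MF(L)$ given by transverse cocycles. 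The one genuine divergence is your measurable completion $L(\mu)$ of a non-maximal support, which is where your own ``main obstacle'' lives: different completions give genuinely different horocyclic foliations (the map does not even extend continuously past the maximal locus), and with a non-maximal $\mu$ the assignment $X\mapsto q(\mu,F_L(X))$ is no longer onto the quadratic differentials with horizontal foliation $\mu$. The paper avoids all of this by defining $F$ only on $\ML_0\times\cT_g$, where $\ML_0$ is the (conull for Thurston measure) set of maximal measured laminations, matched with the conull set $QD_0$ of differentials with simple zeros and no horizontal saddle connections; since your conull sets ultimately reduce to the same thing (maximality is the relevant condition, not unique ergodicity), you should simply restrict to maximal $\mu$ from the outset and drop the completion, at which point your flagged obstacle disappears and the argument closes.
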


That $F$ is a conjugacy means that the following diagram commutes. 
\[
\begin{tikzcd}
\ML\times \cT_g \arrow{r}{E_t} \arrow[swap]{d}{F} & \ML\times \cT_g \arrow{d}{F} \\
QD  \arrow{r}{u_t} & QD
\end{tikzcd}
\]
We will later discuss the natural Lebesgue class measure on $\ML\times \cT_g$ and see that it is the pull back of Masur-Veech measure, but for the moment it suffices to understand that $F$ is Borel-measurable but {not} continuous.

\bold{Significance and applications.} Theorem \ref{T:main}  builds a surprising bridge between the more mysterious world of earthquake flow and the comparatively well understood Teichm\"uller unipotent flow. Perhaps the most important consequence is the following. 

\begin{cor}
Earthquake flow is ergodic. 
\end{cor}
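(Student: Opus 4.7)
The strategy is to transport ergodicity across the conjugacy of Theorem~\ref{T:main} from the well-studied horocycle setting to the earthquake setting. First, I would check that both flows in the commutative diagram are $\mathrm{Mod}_g$-equivariant: the earthquake flow $E_t$ commutes with the diagonal mapping class group action on $\ML\times\cT_g$, and $u_t$ commutes with $\mathrm{Mod}_g$ acting on $QD$ by pullback. Because the conjugacy $F$ is constructed from geometrically natural data (shear coordinates for laminations, and the Thurston-type identification of a hyperbolic surface plus lamination with horizontal/vertical foliation data of a quadratic differential), $F$ is itself $\mathrm{Mod}_g$-equivariant, and so descends to a measurable conjugacy $\bar F$ between the earthquake flow on $(\ML\times\cT_g)/\mathrm{Mod}_g$ and the unipotent flow $u_t$ on the moduli space of quadratic differentials $Q\cM_g := QD/\mathrm{Mod}_g$.

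Second, I would invoke the known ergodicity of the Teichm\"uller horocycle flow $u_t$ on $Q\cM_g$ with respect to Masur--Veech measure. This is a consequence of the Masur--Veech theorem that the Teichm\"uller geodesic flow is ergodic (in fact mixing) on each connected component of a stratum, together with the Mautner phenomenon applied to the upper-triangular subgroup of $SL(2,\bR)$, which propagates ergodicity from the full $SL(2,\bR)$ action down to the unipotent subgroup $\{u_t\}$.

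Third, since the measure on $\ML\times\cT_g$ is (by the remark following Theorem~\ref{T:main}) the pullback of Masur--Veech measure under $F$, the map $\bar F$ is a genuine measure-theoretic isomorphism of measure-preserving flows. Ergodicity is a measure-theoretic invariant of such isomorphisms---invariant sets pull back to invariant sets of the same measure---so ergodicity of $(u_t, Q\cM_g, \mu_{MV})$ transfers directly to ergodicity of the earthquake flow on $(\ML\times\cT_g)/\mathrm{Mod}_g$.

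The only real obstacle lies not in the logical structure above, which is entirely formal once Theorem~\ref{T:main} is in hand, but in the background fact that $F_*(\text{Masur--Veech}) $ coincides with the natural Lebesgue-class (Mirzakhani) measure on $\ML\times\cT_g$, and in the $\mathrm{Mod}_g$-equivariance of $F$; both are claims about the construction of $F$ itself rather than about the dynamics, and the plan is to extract them from the exposition of Mirzakhani's construction rather than to reprove them here.
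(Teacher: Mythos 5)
Your proposal is correct and follows essentially the same route as the paper: transfer ergodicity through the measurable conjugacy $F$ of Theorem \ref{T:main}, reducing to the known ergodicity of the Teichm\"uller unipotent flow, which the paper likewise deduces from ergodicity of the Teichm\"uller geodesic flow together with a vanishing-of-matrix-coefficients argument (Howe--Moore in the paper, the Mautner phenomenon in your write-up --- interchangeable here). You simply make explicit the equivariance and measure-pushforward points that the paper leaves implicit in the remark following Theorem \ref{T:main}.
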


\begin{proof}
It is well known that Teichm\"uller unipotent flow is ergodic: This follows from from the Howe-Moore Theorem and the ergodicity of Teichm\"uller geodesic flow. (See for example the textbook \cite[Section III]{BM} for an introduction to the Howe-Moore Theorem, which applies here because both the Teichm\"uller geodesic and unipotent flows are part of the $SL(2, \bR)$ action. See for example the survey \cite[Section 4]{FM} for the ergodicity of Teichm\"uller geodesic flow, which was originally proved independently by Masur and Veech.) 
\end{proof}

This ergodicity is a key ingredient in the equidistribution applications mentioned above. 

As McMullen describes in his laudation for Mirzakhani's field medal, Theorem \ref{T:main} can be viewed not only as a bridge between flat and hyperbolic geometry, but also a bridge across the ``holomorphic/symplectic" divide \cite{Laudation}. Indeed, quadratic differentials live primarily in the holomorphic world, and since the only concise definition of earthquake flow is as a Hamiltonian flow it is natural to think of earthquakes as living more in the symplectic world. 
 
\bold{Origin and intended audience.} This survey is aimed at the many mathematicians working in dynamics or low dimensional topology who may have heard the statement of Theorem \ref{T:main} but are completely unfamiliar with its proof. We include a limited introduction to some of the necessary background material, in hopes that this will make our exposition readable to a junior graduate student who is able to get some assistance from their advisor or from other sources. The reader should have some familiarity with quadratic differentials, but we have included enough exposition on earthquake flow that the reader who is willing to take some things on faith should be able to gain a meaningful level of understanding of the proof even with no previous exposure to earthquake flow. 


  Our goal is to present the main proof in the most  elementary and geometric way possible. Only after accomplishing that will we proceed to discuss the more sophisticated results that give additional understanding and perspective.  Most of the background material we present is due to people other than Mirzakhani, especially  Thurston and Bonahon. However all the material is chosen to allow us to prove and appreciate Mirzakhani's result. 
  
  Our presentation of the proof of Theorem \ref{T:main} is not exactly the same as Mirzakhani's, in that here we do not make use of transverse cocycles or the symplectomorphism of Bonahon-Sozen \cite{BS}. See Remark \ref{R:differences}.

These notes were originally written to  accompany lectures at the 2018  summer school on Teichm\"uller dynamics, mapping class groups and applications at Grenoble, as well as lectures at the 2018 summer school on Teichm\"uller Theory and its Connections to Geometry, Topology and Dynamics at the Fields Institute in Toronto. 

\bold{Updates.} Since this survey was written in 2018, the author also wrote a much comprehensive survey on Mirzakhani's work aimed at a broader audience \cite{Tour}. Additionally, this survey has motivated further work, and the questions and conjectures proposed in Remarks \ref{R:continuous} and \ref{R:strat} have now been completely resolved \cite{NoConj,calderon2021shear}.  

\bold{Acknowledgments.} I am happy to thank Francisco Arana-Herrera, Francis Bonahon, Aaron Calderon, Steve Kerckhoff, Jeremy Kahn, and Kasra Rafi for helpful conversations. I am also grateful to thank Francisco Arana-Herrera, Dat Nguyen, Weston Ungemach, and Adva Wolf for attending and offering very helpful feedback on a test run of the lectures at Stanford the week before Grenoble, and to Yueqiao Wu for pointing out some corrections. 

Some of the figures were created using  \cite{Pi}. I thank 
Yen Duong,
Aaron Fenyes,
Subhojoy Gupta,
Bruno Martelli,
Athanase Papadopoulos,
Guillaume Th\'eret, and
Mike Wolf
for permission to reproduce figures from other sources.

\section{Preliminaries}

In this section we offer a sketchy introduction to the main objects in Theorem \ref{T:main}. It is included mainly for students coming from flat geometry, who may have seen measured foliations but may benefit from an overview of their relation to laminations and an introduction to earthquakes. Experts should certainly skip this section. 

There are many sources for this material, although we have not found one that presents all the material we need in a way which is both rigorous and quickly understood. In addition to the references cited below, learners might be interested in consulting books like \cite{Cal, FarbM, FLP, PennHar, ThurstonNotes}.  

\bold{Foliations and laminations.} In these notes we consider only closed surfaces of genus $g$ at least 2. A measured foliation is a foliation with finitely many prong type singularities, with a transverse measure. This measure assigns a non-negative number to each transverse arc in such a way that arcs isotopic through transverse arcs with endpoints on the same leaves have the same measure.  A saddle connection of a measured foliation is an arc of the foliation joining two singularities.

Measured foliations are typically considered to be equivalent if they differ via isotopy and Whitehead moves, which are moves that collapse saddle connections to split a higher order prong singularity into lower order singularities joined by a saddle connection, as in Figure \ref{F:Whitehead}. 

\begin{figure}[h!]
\includegraphics[width=.6\linewidth]{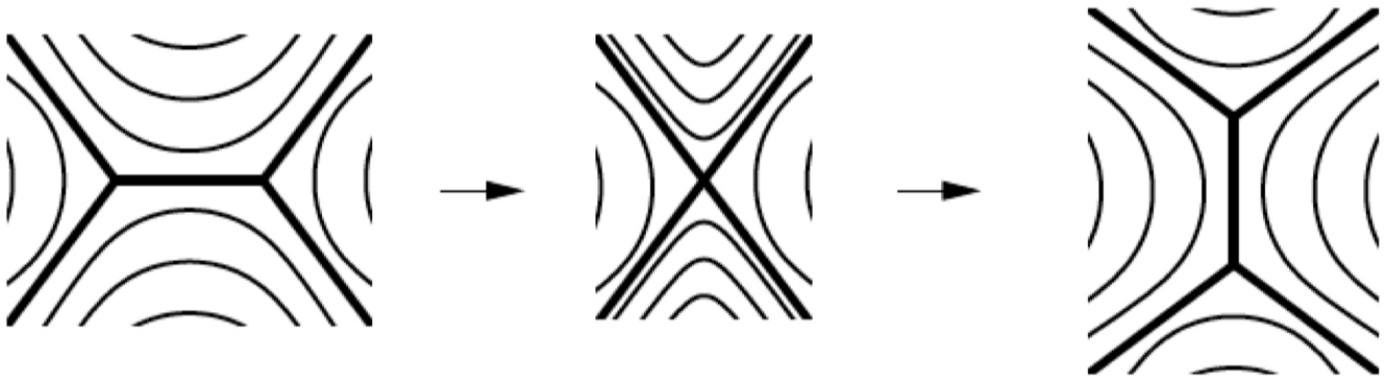}
\caption{Whitehead moves. Picture from \cite{GW}. }
\label{F:Whitehead}
\end{figure}

\begin{rem}
The typical measured foliation has only 3-pronged singularities and no saddle connections, and hence does not admit any Whitehead moves. 
\end{rem}

The space of measured foliations up to Whitehead moves and isotopy is denoted $\MF$. 

\begin{rem}
A celebrated result of Thurston is that $\MF$ is homeomorphic to $\bR^{6g-6}$. We will not make use of this fact. 
\end{rem}

A measured geodesic lamination is a closed subset of a hyperbolic surface  foliated by non-intersecting geodesics with a transverse measure of full support. See \cite[Section 11.6]{Ka} and \cite[Section 8.3]{M} for the basic properties of measured geodesic laminations. 

\begin{rem}
A closed multi-curve is an example of a measured geodesic lamination. However if you take a ``typical" geodesic lamination and intersect it with a transverse arc, you will get a Cantor set with a non-atomic measure. 
\end{rem}

\begin{figure}[h!]
\includegraphics[width=.6\linewidth]{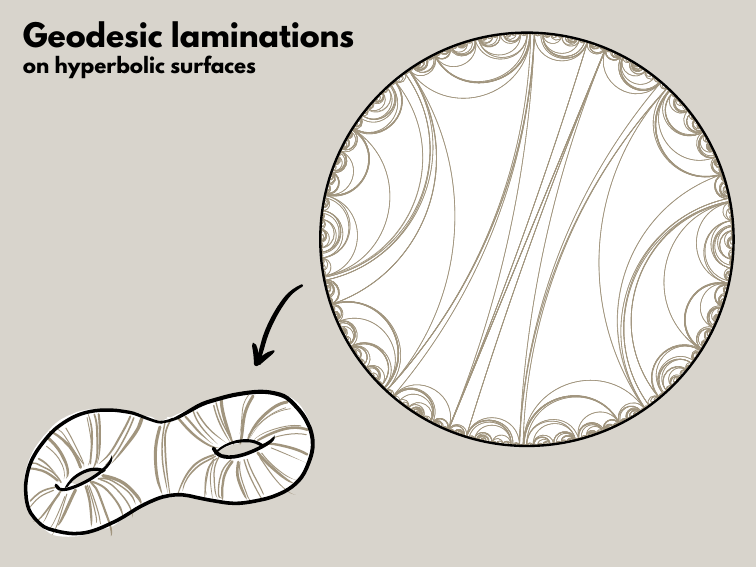}
\caption{A geodesic lamination. Picture from \cite{D}, created by Aaron Fenyes. A similar figure appears in \cite{Fenyes, Fthesis}.}
\label{F:lamination}
\end{figure}

If $\lambda$ is a geodesic lamination on $X$, the  connected components of $X\setminus \lambda$ are called the complementary regions. There are finitely many. Each is bounded by geodesics. The complementary regions can be ideal polygons, and can also be surfaces with genus that are bounded by  closed geodesics and/or ``crowns" of geodesics meeting in cusps.  

\begin{figure}[h!]
\includegraphics[width=.25\linewidth]{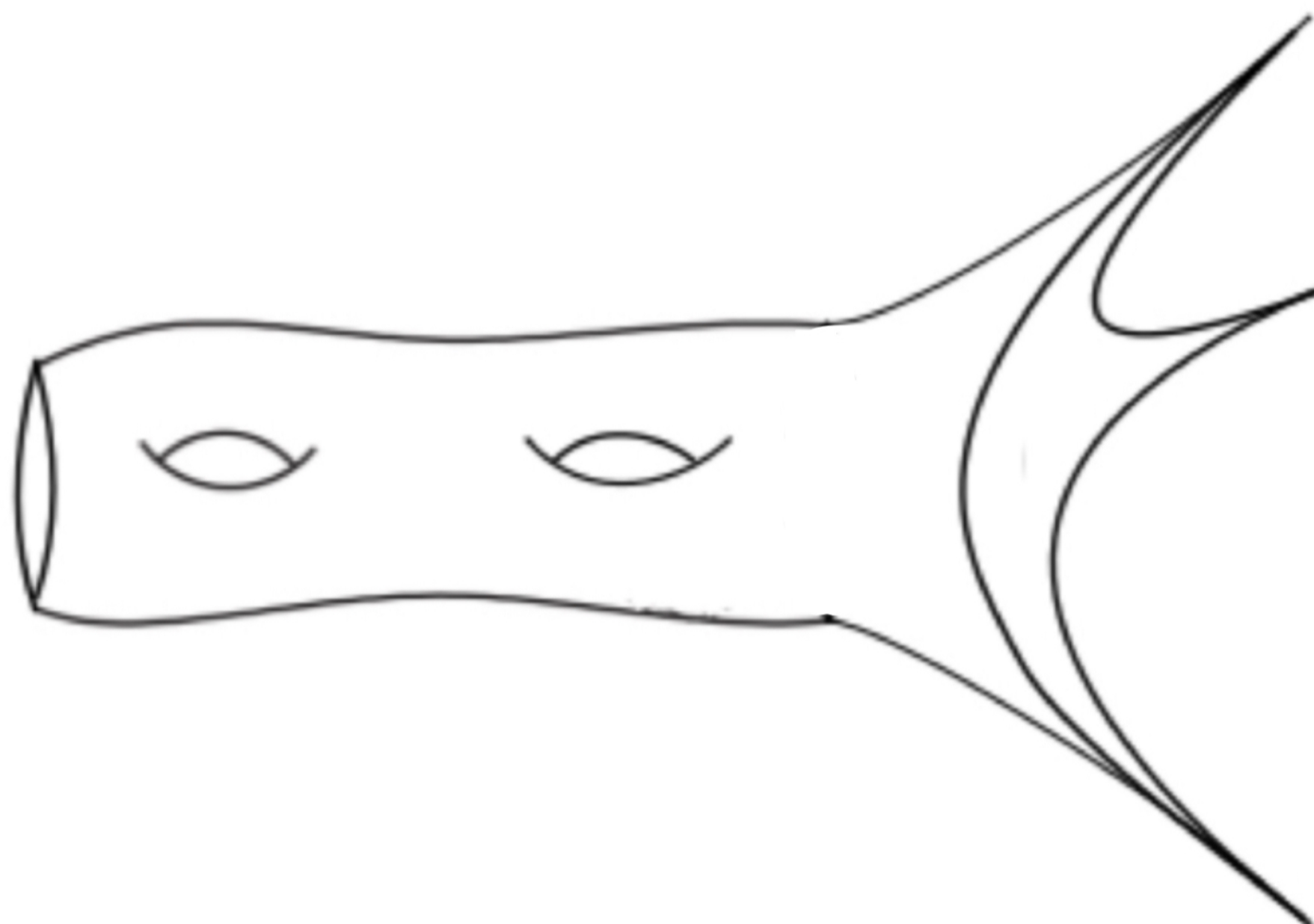}
\caption{A possibly complementary region bounded by a closed geodesic and a ``crown". Picture adapted from \cite{Gu}. }
\label{F:lamination}
\end{figure}

The universal cover of $X$ can be identified with the hyperbolic plane $\bH$. 
Geodesics in $\bH$ correspond to unordered pairs of distinct points on the circle $S^1$ at infinity. Given two different points $X,Y$ of Teichm\"uller space, one obtains an isotopy class of maps from $X$ to $Y$. Lift such a map to obtain a map from $\bH=\tilde{X}$ to $\bH=\tilde{Y}$. 

One can show that any such map from $X$ to $Y$ is a quasi-isometry, so the lifted map from $\bH=\tilde{X}$ to $\bH=\tilde{Y}$  extends to a homeomorphism between the circles at infinity. Hence geodesics on $X$ are in correspondence with geodesics on $Y$, by considering the endpoints of the geodesic on $S^1$. In this way a measured geodesic lamination for one hyperbolic metric uniquely determines one for any other hyperbolic metric, and we can think of measured geodesic laminations as topological rather than metric objects. Denote the set of all measured geodesic laminations by $\ML$. 

We define a line of a measured foliation to be either a leaf not passing through a singularity, or any leaf that is a limit of non-singular leaves. Note that if a line passes through a singularity, it enters and exits the singularity on adjacent prongs. (Those inclined to think about quadratic differentials can think of this as having angle $\pi$ at every singularity.)  

\begin{lem}\label{L:Cornered}
Every line of a measured foliation also determines a pair of distinct points in $S^1$. 
\end{lem}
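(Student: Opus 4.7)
The plan is to fix a hyperbolic metric on the closed surface $S$, lift $\ell$ to $\tilde\ell\subset\bH$, and show that $\tilde\ell$ is properly embedded with two distinct limit points on $S^1=\partial\bH$. The heart of the argument is the properness claim: I would show that $\tilde\ell(t)$ exits every compact $K\subset\bH$ as $|t|\to\infty$. Arguing by contradiction, if $\tilde\ell(t_n)\to x\in\bH$ for some $t_n\to+\infty$, then on a flow-box neighborhood $U\cong\tau\times I$ of $x$ (perturbed off singularities if needed), $\tilde\ell$ crosses the transverse arc $\tau$ at infinitely many distinct points $p_n$ accumulating in $\tau$. Projecting to $S$, the sub-arcs of $\ell$ between successive near-returns to $\pi(x)$, closed up by short transverse arcs, form loops in $S$ that lift back to nearly-closed paths in $\bH$ and are therefore contractible. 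One then combines the transverse measure of $\mathcal{F}$ with the strict positivity of the transverse measure on the intervals $[p_n,p_{n+1}]\subset\tau$ to extract a contradiction, formalising the principle that a single leaf in the universal cover of a compact hyperbolic surface cannot self-accumulate.

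Once properness is in hand, each end of $\tilde\ell$ has a unique limit on $S^1$: if $\tilde\ell(t)$ accumulated at two distinct points $p\ne q$ as $t\to+\infty$, it would have to pass through a compact region separating disjoint neighborhoods of $p$ and $q$ infinitely often, violating properness. This yields well-defined points $p_+,p_-\in S^1$.

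To see that $p_+\ne p_-$, the cleanest route is the standard correspondence $\MF\cong\ML$: collapsing the complementary regions of $\mathcal{F}$ with respect to the hyperbolic metric produces a measured geodesic lamination $\lambda$, and lines of $\mathcal{F}$ correspond bijectively to leaves of $\lambda$ in an endpoint-preserving manner. Since the leaves of $\lambda$ are genuine bi-infinite geodesics in $\bH$, they have two distinct ideal endpoints, whence so does $\tilde\ell$.

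The main obstacle is the properness step. A purely local flow-box argument is insufficient, because successive returns of $\tilde\ell$ to a transverse arc are entirely compatible with the product structure of the foliation in a chart; the real leverage is global, coming from the topology of $S$ (contractibility of the return loops) combined with the measure-theoretic rigidity of $\mathcal{F}$. Once properness is secured, the remaining two steps are routine.
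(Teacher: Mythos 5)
There are two genuine problems with your proposal. First, the step ``proper $\Rightarrow$ a unique limit point on $S^1$'' is false as argued: two disjoint neighborhoods of distinct points $p\neq q\in S^1$ are \emph{not} separated by a compact region of $\bH$ (the complement of the two neighborhoods in $\overline{\bH}$ meets the boundary circle), so a properly embedded ray can oscillate between them while escaping to infinity, and in fact can accumulate on an entire nondegenerate arc of $S^1$. Properness is necessary but nowhere near sufficient. This missing mechanism is exactly what the paper's proof supplies: one finds a simple closed curve that the leaf crosses essentially infinitely often, and the lifts of that curve cut off a nested sequence of half-planes whose closures in $\overline{\bH}$ shrink to a single point of $S^1$; the leaf, once it has crossed such a lift, is trapped in the corresponding half-plane, so it is ``cornered'' into converging. (The paper notes separately that producing this simple closed curve is itself nontrivial, e.g.\ via a normal form for the foliation.) Your proposal has no substitute for this trapping argument.

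Second, your distinctness step is circular in the context of the paper: you invoke the correspondence $\MF\cong\ML$ ``in an endpoint-preserving manner,'' but the tightening map $\MF\to\ML$ is \emph{constructed} by replacing each line with the geodesic having the same pair of distinct endpoints --- i.e.\ the very statement being proved is a prerequisite for the tool you want to use. (The collapse map goes the other way, from a maximal lamination to a foliation, and does not by itself hand you a lamination whose leaves match the lines of a given $\mathcal{F}$.) Your properness step, by contrast, is fine in spirit --- the standard argument closes up a near-return of the leaf by a short transverse arc and rules out the resulting null-homotopic quasi-transverse loop by an Euler--Poincar\'e index count on the disk it bounds (prong singularities have index $1-p/2\leq -1/2$), which is cleaner than the measure-theoretic rigidity you gesture at, since the intersection number with a null-homotopic loop is only an infimum and does not directly contradict the positivity of the measure of one particular representative.
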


\begin{proof}[Cartoon of the proof.]
Consider a simple closed curve that the leaf passes through infinitely many times but with no unnecessary intersections that could be removed by an isotopy. The leaf gets ``cornered" by  lifts of the simple curve to smaller and smaller regions of $\bH$, as seen from a fixed basepoint, forcing the leaf to converge to the intersection of these half-spaces, as in Figure \ref{F:Cornered}. 
\begin{figure}[h!]
\includegraphics[width=.25\linewidth]{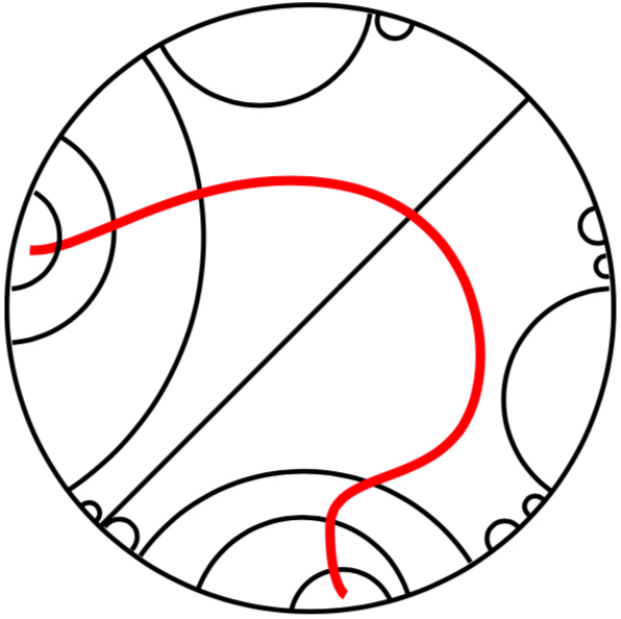}
\caption{The proof of Lemma \ref{L:Cornered}. Picture from \cite[Figure 8.12]{Mar}.}
\label{F:Cornered}
\end{figure}
\end{proof}

\begin{rem}
It is not so easy to prove the desired simple closed curve exists. One possibility is to use a ``normal form" for the foliation \cite[Section 6.4]{FLP}.
\end{rem}

\begin{rem}
If the measured foliation is known to arise from a quadratic differential, one can alternatively use the fact that the hyperbolic and flat metrics are quasi-isometric \cite[Section 5.3]{Ka}.  
\end{rem}

By replacing each line in a measured foliation by the  geodesic with the same endpoints, one obtains an associated measured lamination \cite[p. 251]{Ka}. This procedure ``tightens" each leaf to a geodesic. 

\begin{figure}[h!]
\includegraphics[width=.25\linewidth]{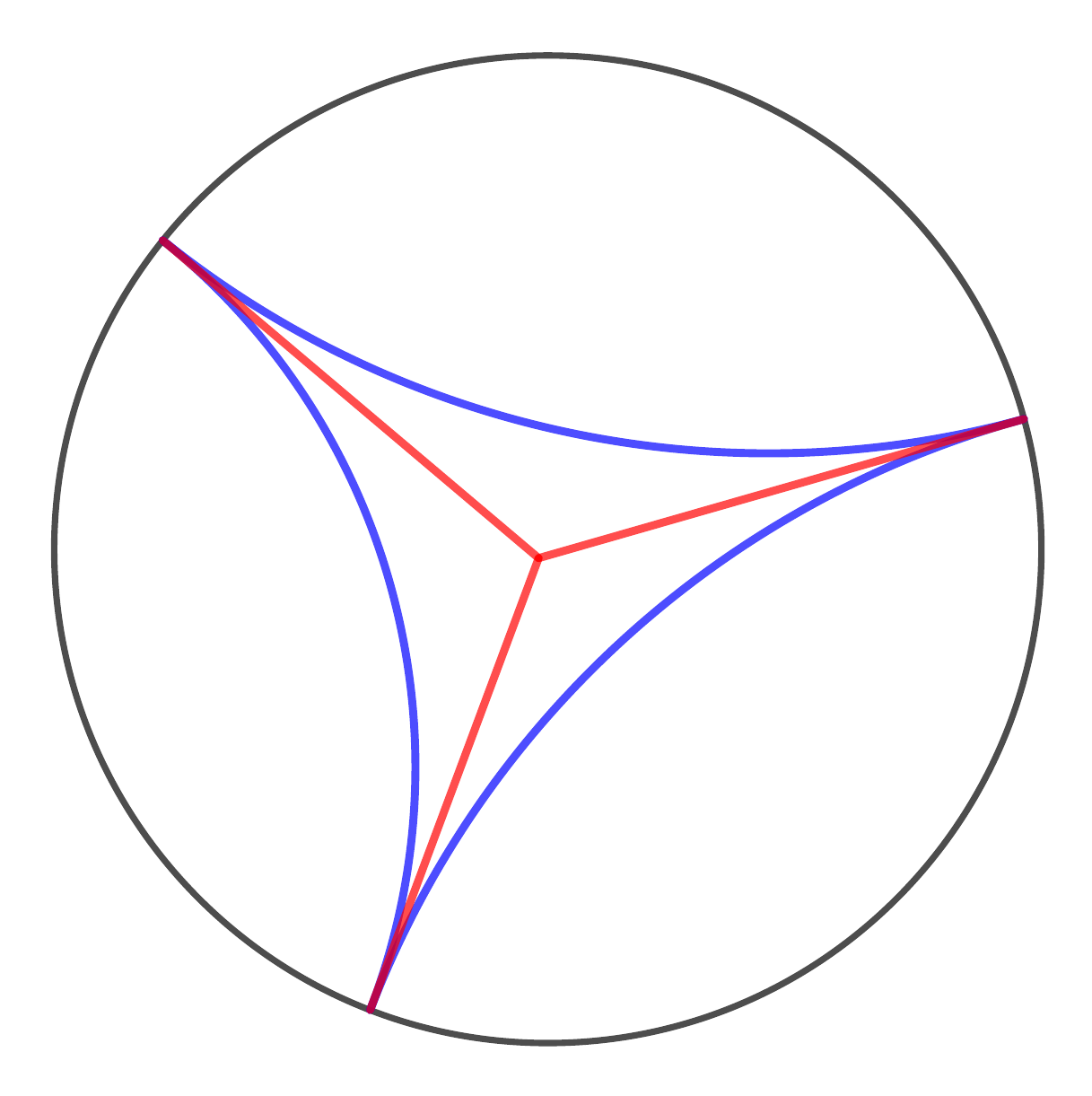}
\caption{Each three pronged singularity that does not lie on any saddle connections gives an ideal triangle in the associated ``tightened" lamination. }
\label{F:Simple}
\end{figure}

\begin{thm}
The tightening map $\MF \to \ML$ is a homeomorphism. 
\end{thm}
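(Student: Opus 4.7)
The plan is to prove that the tightening map $T: \MF \to \ML$ is a continuous bijection with continuous inverse, by exploiting the fact that both spaces are topologized by intersection numbers with simple closed curves.

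The first step is to show $T$ preserves intersection numbers: $i(\gamma, F) = i(\gamma, T(F))$ for every simple closed curve $\gamma$ and every $F \in \MF$. Given the geodesic representative $\gamma^*$ of $\gamma$, each geodesic leaf of $T(F)$ that crosses $\gamma^*$ comes from a line of $F$ with the same pair of endpoints at infinity by Lemma \ref{L:Cornered}, and the transverse measure is preserved by the very definition of the tightening. Using a minimizing representative of $\gamma$ transverse to $F$, and invoking hyperbolicity to rule out extraneous crossings, one sees both intersection numbers equal the total transverse mass of the matching leaves/lines. Since both $\MF$ and $\ML$ are topologized by their intersection pairings with simple closed curves, this immediately yields that $T$ is continuous and injective.

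For surjectivity, I would construct an inverse directly. Given $\lambda \in \ML$, on each complementary region of $\lambda$ (an ideal polygon, or a surface with boundary consisting of closed geodesics and crowns), I would collapse onto a chosen spine to produce a foliation with the appropriate prong-type singularities, and extend the transverse measure of $\lambda$ inward in a compatible way. The resulting measured foliation $F$ has lines whose endpoints on $S^1$ coincide with those of the geodesic leaves of $\lambda$, with matching transverse measures, so $T(F) = \lambda$. Different choices of spine differ by Whitehead moves and isotopy, so the class in $\MF$ is well-defined. Continuity of $T^{-1}$ then follows from the same intersection-number argument as in the first step.

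The main obstacle is verifying the inverse construction carefully: showing the collapse of complementary regions onto spines yields a foliation well-defined up to Whitehead moves and isotopy, and that it depends continuously on $\lambda$. For ideal polygonal complementary regions the collapse is essentially canonical (a single prong singularity at the center), but complementary regions containing crowns or of positive genus require checking that the combinatorics of the spine are stable across perturbations of $\lambda$. This is the step where some care with train tracks or the normal forms mentioned in the remark after Lemma \ref{L:Cornered} becomes necessary.
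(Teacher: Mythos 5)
Your outline is reasonable, but it has genuine gaps at exactly the points the paper flags as the real content of this theorem (the paper itself does not prove it, remarking that no short proof is known and deferring to \cite{L}). The most serious problem is the inverse construction. Collapsing a complementary region of positive genus onto a spine changes the homeomorphism type of the surface, so the result is not a measured foliation on the original surface at all. The workable route is to first extend $\lambda$ to a maximal lamination (all complementary regions ideal triangles) and collapse only the triangles; but then one must show that the foliations produced by \emph{different} maximal extensions differ by Whitehead moves and isotopy, and this is the step that is genuinely not obvious --- your sentence ``different choices of spine differ by Whitehead moves'' asserts precisely the hard claim without an argument. Separately, your construction cannot produce a preimage for a lamination whose transverse measure has atoms on closed leaves: the corresponding foliation contains a foliated annulus of positive transverse width around each such leaf, which no collapse of complementary regions can create, so this case requires a special argument that your proposal does not supply.

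There is also a smaller gap in the first step. That $T$ preserves intersection numbers with simple closed curves yields continuity and injectivity only once one knows that these intersection numbers separate points of $\MF$ and of $\ML$, i.e.\ that both spaces embed injectively into the space of functions on isotopy classes of simple closed curves. For $\MF$ this is a substantive theorem (proved via normal forms or train tracks, cf.\ \cite{FLP}), not a formal consequence of how the topology is defined, so it should be invoked explicitly rather than folded into the phrase ``topologized by intersection pairings.'' Granting that, your argument that $T$ is a homeomorphism onto its image is sound; surjectivity then carries all the remaining weight, and that is where the construction needs to be repaired along the lines above.
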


I do not know any short proof of this result, but a good reference on the tightening map and related topics is \cite{L}. One approach is to build an inverse map using train tracks, but this involves not only showing that every lamination is carried by a train track, but also that the measured foliations constructed using different choices of train track differ by Whitehead moves. See for example \cite[Section 5]{Leininger} for an expository account.

A measured lamination is called maximal if its complementary regions are all ideal triangles. The value of the inverse map $\ML \to \MF$ on a maximal measured lamination can be visualized by a ``collapsing" procedure. This procedure is perhaps initially somewhat mind bending, but nonetheless we briefly give the flavor of how it is performed. 
One ``collapses" or ``pinches" each triangle onto a ``skeleton" consisting of three lines, one going towards each cusp of the triangle, meeting in a central point. This eliminates all the ``empty space" not covered by the lamination, and the result is a foliation. 

\begin{rem}
One should compare this collapse map to the map $x\mapsto \int_0^x d\mu$ for a measure $\mu$ on a Cantor set in $\bR$. This collapses all the intervals not included in the Cantor set.
\end{rem}

In this way we see that each complementary triangle corresponds to a three-pronged singularity. This can be easily extended to the case when all complementary regions are ideal polygons. One could try to extend it further to the case where the complementary regions have genus by extending the lamination to a maximal lamination, but then one has to show that the  measured foliations resulting from different extensions differ by Whitehead moves, and this is not obvious. The case when the measure gives positive measure to some closed curves requires a special argument, since in this case the foliation can't be obtained by collapsing. 

\bold{Intersection number.} There is a continuous intersection number function  $$i:\ML\times \ML\to \bR_{\geq0},$$ which, restricted to weighted simple multi-curves, is the linear extension of the usual geometric intersection number. One can show that weighted simple multi-curves are dense in $\ML$, so this uniquely determines $i$, however there are also easy direct definitions \cite{Bcurrents}. Since $\MF \simeq \ML$, we also get an intersection number on $\MF$. 

If $\alpha\in \MF$ and $\beta$ is a simple curve, $i(\alpha, \beta)$ is the inf over all ways of realizing $\beta$ as a sequence of arcs transverse to $\alpha$ of the sum of the transverse $\alpha$ measures of these arcs. This can be extended linearly to the case of $\beta$ a simple weighted multi-curve, and by continuity to any $\beta\in \ML$. 

\begin{rem}
The topology on $\MF$ and $\ML$ is the weakest topology for which the function $\lambda\mapsto i(\lambda, \gamma)$ is continuous for each simple closed curve $\gamma$.  
\end{rem}

\bold{Quadratic differentials.} Define $\Delta\subset \MF\times \MF$ by
$$\Delta=\{(\alpha, \beta): i(\alpha, \gamma)=0=i(\beta,\gamma) \text{ for some } \gamma \in \MF\} .$$ 

Note that $\Delta$ contains the diagonal $\{(\alpha, \alpha)\}$ (just take $\gamma=\alpha$), so we can think of $\Delta$ as a ``generalized" or ``fat" diagonal.  

A quadratic differential $q$ determines two measured foliations, namely the horizontal one $h(q)$ and the vertical one $v(q)$. 

\begin{lem} 
For any $q$, $(h(q), v(q)) \notin \Delta$.
\end{lem}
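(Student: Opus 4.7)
I would argue by contradiction: suppose $(h(q), v(q)) \in \Delta$, so some $\gamma \in \MF$ satisfies $i(h(q),\gamma) = i(v(q),\gamma) = 0$. Note $\gamma$ is necessarily nonzero, since the transverse measure of a measured foliation has full support. The strategy is to show that the vanishing of both intersection numbers forces the flat $q$-length of $\gamma$ to be zero, contradicting positivity of the flat metric on nonzero measured foliations.

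The main input is the inequality $\ell_q(\gamma) \leq i(h(q),\gamma) + i(v(q),\gamma)$, which I would first establish for a weighted simple closed curve $\gamma$. In local period coordinates $z = x+iy$ for $q$, away from the zeros of $q$, the horizontal and vertical foliations have transverse measures $|dy|$ and $|dx|$ respectively, while the flat metric $|q|^{1/2}$ corresponds to $\sqrt{dx^2 + dy^2}$. The pointwise inequality $\sqrt{dx^2+dy^2} \leq |dx|+|dy|$, integrated along the $q$-geodesic representative $\gamma^*$ of $\gamma$, gives
$$\ell_q(\gamma) \;=\; \int_{\gamma^*}\sqrt{dx^2+dy^2} \;\leq\; \int_{\gamma^*}|dx| + \int_{\gamma^*}|dy| \;=\; i(v(q),\gamma) + i(h(q),\gamma),$$
using the standard fact that the $q$-geodesic representative realizes each of the two intersection numbers. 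Since weighted simple multi-curves are dense in $\MF$ and both sides are continuous on $\MF$ (the right side by continuity of $i$, the left side by the standard continuous extension of flat length from curves to laminations), the inequality persists for all $\gamma \in \MF$.

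To conclude, I would use that $\ell_q(\gamma) > 0$ for any nonzero $\gamma \in \MF$: a transverse measure of full support, integrated against the positive flat arc length of leaves, yields a strictly positive total. Combined with the inequality, this contradicts $i(h(q),\gamma) = i(v(q),\gamma) = 0$. The principal obstacle is the clean continuous extension of $\ell_q$ to $\MF$ together with its strict positivity on nonzero elements; these are standard but do require some care. A more self-contained alternative would avoid flat length entirely: since $h(q) \cup v(q)$ has only pointlike complementary components (the zeros of $q$), every essential simple closed curve crosses at least one of them, so $h(q)$ and $v(q)$ fill at the level of simple curves, and one then upgrades this to all of $\MF$ by a Bonahon-style properness argument for the continuous functional $i(h(q),\cdot) + i(v(q),\cdot)$.
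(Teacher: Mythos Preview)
Your argument is correct and follows essentially the same route as the paper: both approximate $\gamma$ by weighted simple curves, use that the flat geodesic representative simultaneously minimizes horizontal and vertical variation, and derive a contradiction from the resulting flat length being forced to zero. Your version is cleaner in isolating the inequality $\ell_q(\gamma)\le i(h(q),\gamma)+i(v(q),\gamma)$ and in naming explicitly the two ingredients (continuous extension and strict positivity of $\ell_q$ on $\MF$) needed to close the argument; the paper's proof leaves the endgame informal, invoking only that tightening to the flat geodesic does not increase the $x$- and $y$-components.
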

\begin{proof}[Proof sketch]
 Otherwise  take a sequence of weighted simple curves $\gamma_i$ converging to the $\gamma$ showing that $(h(q), v(q))\in \Delta$. Since $i(\gamma_i, h(q))\to 0$, there is a sequence of saddle connections representing $\gamma_i$ whose  sum of absolute values of $x$-components is tiny compared to the total length. (If the holonomy of a saddle connection is $x+iy$, we call $x$ the ``$x$-component" and $y$ the ``$y$-component".) Using the corresponding statement with horizontal and vertical switched, we can obtain a contradiction. (If a curve is represented by a sum of saddle connections whose sum of $x$-components is less than $C$, then the same is true of the flat geodesic representative. As you ``tighten" to get the flat geodesic representative, the $x$ and $y$ coordinates don't get bigger. In fact, both components get monotonically smaller. To make this precise, you need to define an appropriate tightening procedure. Alternatively, see  \cite[Proposition 3.9]{Leininger} for a proof that the flat geodesic gives intersection number with the horizontal and vertical foliations.) 
\end{proof}

Hence we obtain a map from the bundle $QD$ of non-zero quadratic differentials over Teichm\"uller space to $\MF\times \MF\setminus \Delta$ given by $q\mapsto (h(q), v(q))$. 

\begin{rem}
The intersection number $i(h(q), v(q))$ is the area of $q$. 
\end{rem}

\begin{thm}\label{T:QDMF}
 The map $q\mapsto (h(q), v(q))$ determines a homeomorphism $QD\to \MF\times \MF\setminus \Delta.$
 \end{thm}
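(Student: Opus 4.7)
The plan is to apply invariance of domain. Both $QD$ and $\MF \times \MF \setminus \Delta$ are topological manifolds of real dimension $12g-12$ (with $\cT_g \cong \bR^{6g-6}$, each fiber of $QD \to \cT_g$ of real dimension $6g-6$, and $\MF \cong \bR^{6g-6}$), so a continuous, injective, proper map between them is automatically a homeomorphism: the image is open by invariance of domain and closed by properness, hence is all of the connected target.

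For continuity, I would verify that $q \mapsto i(h(q), \gamma)$ is continuous on $QD$ for each simple closed curve $\gamma$, and similarly for $v(q)$; by the characterization of the topology on $\MF$ quoted in the excerpt, this suffices. Concretely, $i(h(q), \gamma)$ equals the total vertical variation of the flat-geodesic representative of $\gamma$ in the $q$-metric, and the flat geodesic depends continuously on $q$.

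For injectivity, if $(h(q), v(q)) = (h(q'), v(q'))$, then away from the singular set the two transverse measures jointly furnish canonical local flat coordinates $(x,y)$ in which the differential is $(dx + i\,dy)^2$. Matching these charts identifies the marked Riemann surfaces and the differentials, so $q = q'$.

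Properness is the main obstacle, and is precisely where the $\Delta$ condition is designed to be used. Suppose $q_n \in QD$ satisfies $(h(q_n), v(q_n)) \to (F, G) \notin \Delta$. First note $i(F, G) > 0$: otherwise the choice $\gamma = G$ witnesses $(F, G) \in \Delta$, since $i(G, G) = 0$ always. Because $\mathrm{area}(q_n) = i(h(q_n), v(q_n)) \to i(F, G)$, the areas are bounded above and below. If $q_n$ escaped every compact set of $QD/\mathrm{MCG}$, then by a flat-metric analogue of Mumford compactness there would be, after applying mapping class group elements and passing to a subsequence, a simple closed curve $\gamma$ with flat length $\ell_{q_n}(\gamma) \to 0$. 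But $i(h(q_n), \gamma) \le \ell_{q_n}(\gamma)$ and $i(v(q_n), \gamma) \le \ell_{q_n}(\gamma)$, both tending to zero, so in the limit $i(F, \gamma) = i(G, \gamma) = 0$, contradicting $(F,G) \notin \Delta$. The subtle technical points are justifying that flat-metric escape with bounded area really does produce a vanishing-length simple closed curve (rather than a more exotic degeneration), and transferring properness on the moduli space quotient back to $QD$ itself via the properly discontinuous $\mathrm{MCG}$ action on both sides.
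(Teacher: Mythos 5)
Your strategy (invariance of domain plus properness) is genuinely different from the paper's, which constructs the inverse map explicitly: given $(h,v)\notin\Delta$, one tightens both classes to geodesic laminations, shows the complementary regions of $h\cup v$ are compact polygons, and collapses them to produce a quadratic differential. The reason the paper goes to this trouble is precisely the reason your injectivity step has a gap: $h(q)$ and $v(q)$ are only equivalence classes up to isotopy \emph{and Whitehead moves}. If $(h(q),v(q))=(h(q'),v(q'))$, you obtain one equivalence for the horizontal data and a separate, a priori unrelated one for the vertical data, and there is no immediate way to realize both simultaneously so as to ``match the charts.'' Producing a canonical joint realization of the two classes is exactly the content of the tightening-and-collapsing construction, so as written your injectivity argument quietly assumes the hard part of the theorem. (On a fixed fiber one could instead invoke the Hubbard--Masur theorem, but across fibers one still needs a statement of this kind.)

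Two further steps need justification. Surjectivity via ``nonempty, open and closed in a connected target'' requires $\MF\times\MF\setminus\Delta$ to be connected; this is true but not obvious, and the easiest proof is that it is the continuous image of the connected space $QD$ --- which is the theorem itself, so you would need an independent argument. Likewise, transferring properness from the moduli-space quotient back to $QD$ requires knowing that the mapping class group acts properly discontinuously on $\MF\times\MF\setminus\Delta$: a sequence $q_n$ can remain in a compact part of the quotient while escaping to infinity in $QD$ under mapping classes $\phi_n\to\infty$, and you must rule out convergence of $\phi_n\cdot(h(q_n),v(q_n))$ in the target. That proper discontinuity is again usually obtained as a corollary of the homeomorphism being proved. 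The remaining ingredients --- continuity via $q\mapsto i(h(q),\gamma)$, the observation that $i(F,G)>0$ off $\Delta$, the flat-systole degeneration criterion, and the inequality $i(h(q_n),\gamma)\le\ell_{q_n}(\gamma)$ --- are sound.
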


\begin{proof}[Proof sketch]
One can create an inverse map as follows. Given $$(h,v)\in \MF\times \MF\setminus \Delta,$$ tighten each $h,v$ to geodesic laminations, also denoted $h,v$. Since $(h,v)\notin \Delta$, we have that $h$ and $v$ do not share any leaves, and that each complementary region of $h\cup v$  is a compact polygon:
\begin{itemize} 
\item Indeed, if $h$ and $v$ shared a leaf, then a weak star limit of the Lebesgue measure supported larger and larger segments of this leaf would give a measured foliation $\gamma$ with $i(h, \gamma)=0=i(v, \gamma)$. 
\item If there is a complementary region that isn't a polygon, one could pick a simple curve $\gamma$ in  that region.
\item To see that the polygons are compact, i.e. that none of the vertices are at infinity, requires a bit of extra argument again using weak star limits. 
\end{itemize}

Collapsing all the connected components of the complement of $h\cup v$, as well as all connected components of $h\setminus v$ and $v\setminus h$,  defines a quadratic differential by picking local coordinates $z$ for which $\Re(z)$ and $\Im(z)$ locally coincide with the two foliations. Each component gets collapsed to a single point. 

If this collapsing seems too drastic, one should ponder  maps from rectangles on the surface bounded by arcs of the lamination to rectangles in $\bR^2$, defined as follows: One considers arcs (or isotopy classes of arcs rel endpoints) from a designated corner to a point in the rectangle, and take the intersection numbers with the two foliations to get the two coordinates. This map accomplishes the desired collapsing. For more details, see \cite[Proof of Lemma 6.2]{CB}. 
\end{proof}

Theorem \ref{T:QDMF} is discussed from different points of view in \cite[Section 3]{GM} and \cite[Section 2]{P}. 

\bold{Earthquakes on surfaces.} Consider a simple closed curve $\alpha$ on an oriented hyperbolic surface $X$. The right earthquake for time $t$ about $\alpha$ is the surface $E_{t\alpha}(X)$ obtained by cutting $X$ along the geodesic representative of $\alpha$ and regluing with a twist to the right by hyperbolic distance $t$. The notion of ``right" just depends on the orientation of $X$, and doesn't require any orientation of $\alpha$: Two ants facing each other across the curve $\alpha$ will each see the other move to the right. 

\begin{rem}
$E_{t\alpha}$ determines a flow on Teichm\"uller space. Since we are making a continuous change to the metric, the marking can be transported along the earthquake path. After continuously earthquaking from  $t=0$ to $t=\ell(\alpha)$ (the length of $\alpha$), one arrives back at the same hyperbolic metric, but with a new marking  that differs from the old marking by a Dehn twist. 
\end{rem}

\begin{rem}
In appropriate Fenchel-Nielsen coordinates, $E_{t\alpha}$  is a translation. 
\end{rem}

One can similarly define earthquakes for any simple weighted curve $\alpha$, where the amount of the twist in each curve depends on the measure of a transverse arc. One then defines the earthquake in an arbitrary $\alpha\in \ML$ to be the limit of earthquakes in simple weighted curves $\alpha_n$ that converge to $\alpha$, 
$$E_\alpha(X)=\lim_{n\to\infty} E_{\alpha_n}(X).$$
We will sketch a proof that this is well-defined, i.e. that the limit doesn't depend on the sequence $\alpha_n$ of weighted multi-curves converging to $\alpha$. Our discussion will take in the universal cover. 

\bold{Earthquakes on the hyperbolic plane.} One can define a measured lamination on the hyperbolic plane $\bH$ in the same way as on a surface. However, $\bH$ of course doesn't have any closed curved, so instead we make the following definition: A discrete measured lamination on $\bH$ is a union of disjoint geodesics such that each compact set in $\bH$ intersects only finitely many of the geodesics, and such that each geodesic is endowed with a positive weight  (the transverse measure of a small arc crossing only that geodesic). A key motivating example of a discrete measured lamination on $\bH$ is the preimage of a simple weighted curve on a closed surface.  

Let $\lambda$ be a discrete measured lamination on $\bH$, and let $w_0\in T^1\bH$ be a fixed unit tangent vector. The choice of $w_0$ won't be important and so is usually suppressed from the notation. The earthquake in $\lambda$ is defined to be the unique discontinuous map $E_\lambda: \bH\to \bH$ which is a local isometry off $\lambda$, fixes $w_0$, and ``shears" along each leaf of $\lambda$ by the measure of that leaf. 

The last part of that definition is rather vague, so we'll give a more explicit inductive construction of the earthquake. First, cut out $\lambda$, to get countably many regions of $\bH$, each bounded by geodesics. Define the map to be the identity on the region containing $w_0$. Now, having already defined the earthquake on a region $R$, we explain how to define it for a region $R'$ that borders $R$ along a geodesic $\gamma$ in the support of $\lambda$. Let $m_{\gamma}>0$ be the weight of $\gamma$ in $\lambda$. Let $I_R$ be the isometry of $\bH$ taking $R$ to $E_\lambda(R)$. Let $T$ be the isometry of $\bH$ that translates along $I_R(\gamma)$ by $m_\gamma$. Then we define 
$$E_\lambda(R') = (T\circ I_R) (R').$$
Thus, as in the case of closed surfaces, we arrange for two ants staring at each other from across different sides of $\gamma$ to each see the other move to the right by a distance equal to the weight $m_\gamma$. 


The only reason we  require the earthquake to fix a unit tangent vector is so that it is well defined, avoiding the worry that it might only be well defined up to certain isometries of $\bH$.  

Given any lamination in the hyperbolic plane, we define the earthquake in this lamination to be the limit of earthquakes in discrete laminations which approximate the given lamination. This should again be a discontinuous map $\bH\to \bH$ that is an isometry off the lamination, but as in the surface case we need to show it is well defined and doesn't depend on the choice of discrete approximates.

 \bold{The main estimate.}
We will now sketch a proof that earthquakes are well-defined on $\bH$,  following the more detailed treatment in \cite[Section II]{K}. Some readers might choose to skip this. 

We require two estimates, which refer to the $PSL(2,\bR)$ invariant metric on the unit tangent bundle $T^1 \bH$. We use $E_{tv}$ to refer to the time $t$ earthquake in the geodesic through a unit tangent vector $v$. (A single geodesic with weight 1 is an example of a discrete lamination, so the definition above applies in this case.)

\begin{lem}
For all $D,T>0$ there exists $K=K(D,T)$ such that for all $v, v', w\in T^1 \bH$ that are pairwise distance at most $D$ apart, and all $t\leq T$, we have
$$d(E_{tv}(w), w) \leq Kt$$
and  
$$d(E_{tv}(w), E_{tv'}(w)) \leq Kt d(v, v').$$
\end{lem}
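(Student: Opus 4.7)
The plan is to view $E_{tv}$ as the one-parameter family of hyperbolic isometries of $\bH$ given by translation by $t$ along the oriented geodesic $\gamma_v$, acting naturally on $T^1\bH \cong PSL(2,\bR)$. Writing $E_{tv} = \exp(tX_v)$ for the Killing vector field $X_v$ generating this translation reduces both estimates to smoothness and perturbation bounds for a family of one-parameter subgroups of $PSL(2,\bR)$.

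For the first estimate, since $d(v,w) \leq D$ and the basepoint of $v$ lies on $\gamma_v$, the basepoint of $w$ is within hyperbolic distance $D$ of $\gamma_v$. The magnitude $|X_v(w)|$ in the $PSL(2,\bR)$-invariant metric depends only on the relative position of $w$ and $\gamma_v$, hence is bounded by some $M = M(D)$. Because $X_v$ is preserved by its own flow, the orbit $s \mapsto E_{sv}(w)$ has constant speed $|X_v(w)|$, so $d(E_{tv}(w),w) \leq |X_v(w)| \cdot t \leq Mt$.

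For the second estimate, I would examine the ratio $(E_{tv})^{-1} E_{tv'}$, which is the identity whenever $t = 0$ or $v = v'$. Smoothness of $v \mapsto X_v$ together with $PSL(2,\bR)$-equivariance gives $|X_v(u) - X_{v'}(u)| \leq C\, d(v,v')$ uniformly for $u$ in a bounded neighborhood of $v$. A Baker--Campbell--Hausdorff estimate then yields $(E_{tv})^{-1} E_{tv'} = \exp\bigl(t(X_{v'} - X_v) + O(t^2\, d(v,v'))\bigr)$, and applying this near-identity isometry to $w$ produces a displacement bounded by $K t\, d(v,v')$.

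The main obstacle is controlling the coupling between $t$ and $d(v,v')$ in the Baker--Campbell--Hausdorff expansion and ensuring that all higher-order error terms absorb into a single constant $K = K(D,T)$; this is where the restriction $t \leq T$ rather than arbitrary $t$ is essential, so that $O(t^2)$ corrections can be bounded by a multiple of $t$.
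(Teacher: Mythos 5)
Your argument is correct and is essentially the approach the paper intends: the paper gives no proof beyond the remark that ``both estimates are extremely soft, and use only the fact that a differentiable function on a compact set is Lipschitz,'' citing Kerckhoff, and your Killing-field/one-parameter-subgroup argument (with the restriction $t\le T$ and the bound $d(v,w)\le D$ supplying the compactness) is a concrete implementation of exactly that. You also read $E_{tv}$ the right way for these estimates, namely as the global hyperbolic translation along the geodesic through $v$ acting on $T^1\bH\cong PSL(2,\bR)$, which is the interpretation under which the composition arguments later in the section make sense.
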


Both estimates are extremely soft, and use only the fact that a differentiable function on a compact set is Lipschitz \cite[Lemma 1.2]{K}. 

Consider two unit tangent vectors $w_0, w$ in $T^1 \bH$ that do not lie on the lamination. We consider two discrete measured laminations $\lambda, \lambda'$ that both approximate the given measured lamination. \emph{We need to show that the  earthquakes corresponding to $\lambda$ and $\lambda'$ that fix $w_0$ do almost the same thing to $w$.} If we can do this, up to small details we will have shown that earthquakes are well defined on $\bH$.

For each discrete approximation ($\lambda$ or $\lambda'$), there is a finite, totally ordered set of geodesics separating the basepoints of $w_0$ and $w$. Only these geodesics and their measures are relevant to understanding the effect of the earthquake on $w$. 

\begin{figure}
\includegraphics[width=.6\linewidth]{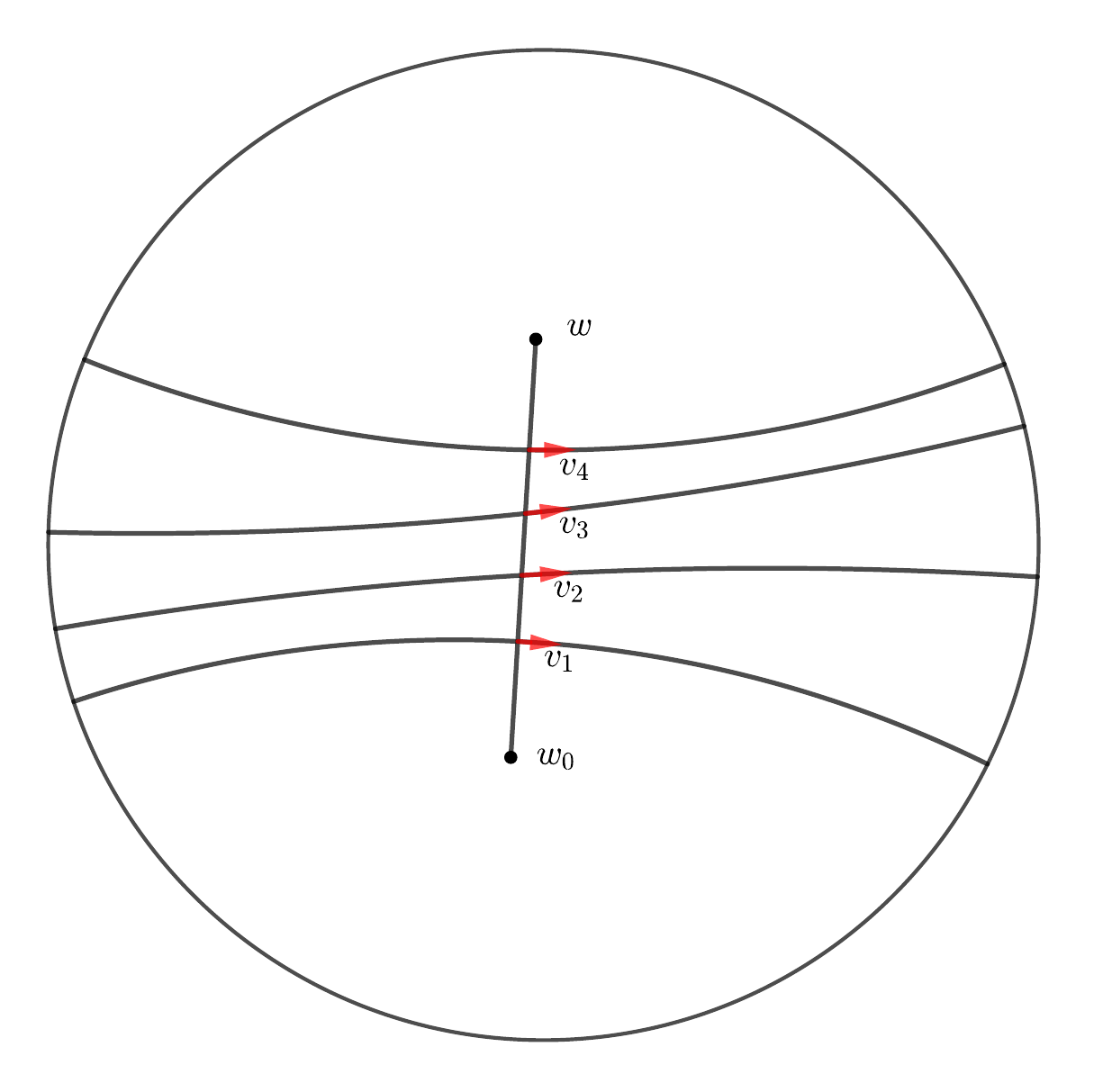}
\caption{The proof that earthquakes are well-defined.}
\label{F:Linear}
\end{figure}

Consider the geodesic arc from the basepoint of $w_0$ to the basepoint of $w$. Consider also the finite sequence $v_i$ (respectively $v_i'$) of unit tangent vectors based at the intersections of $\lambda$ (respectively $\lambda'$) with  this arc that point along $\lambda$ (respectively $\lambda'$). Let $m_i$ (respectively $m_i'$) be the measure (weight) of the geodesic of $\lambda$ (respectively $\lambda'$) along which $v_i$ (respectively $v_i'$) points. 

We now qualitatively outline a quantitative argument in \cite{K}. Let us divide the arc in to small chunks (subintervals). Using the first estimate above, we can reduce to the case that the two discrete measures give exactly the same mass to each chunk. This is because they must give almost the same mass to each chunk, and getting rid of a tiny bit of mass won't change the effect of the earthquake on $w$ much.

\begin{figure}
\includegraphics[width=\linewidth]{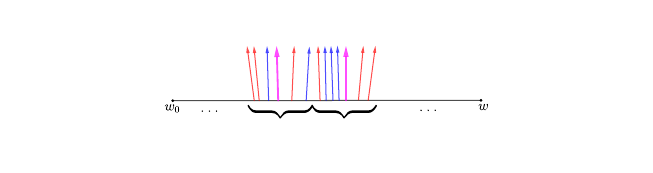}
\caption{The proof that earthquakes are well-defined. The $v_i$ are in red, and the $v_i'$ in blue. On each chunk of the arc from $w_0$ to $w$, they are replaced with a single unit tangent vector $r_k$ shown in purple. }
\label{F:Approx}
\end{figure}

On each chunk, we replace all the $v_i$ in that chunk with a single vector close to all of them, and we earthquake in  that vector with the corresponding amount of mass. The estimates above show that the collective effect of all of these changes is small, so 
this gives that the difference of the two earthquakes applied to $w$ is small. Hence, the earthquakes in $\lambda$ and $\lambda'$ do almost the same thing to $w$. So earthquakes are well defined. 


\bold{Equivariance (back to surfaces).}  If the measured lamination is invariant by a Fuchsian group, the earthquake map will be equivariant by a representation of this Fuchsian group, whose image will be a new Fuchsian group. This new Fuchsian group can be seen as the earthquake of the first Fuchsian group. 

We end by being more explicit about how to get the Fuchsian group representing $E_\lambda(\bH/\Gamma)$ from $\Gamma$. Pick a  $w_0\in T_1 \bH$ not on $\tilde\lambda$. For each $\gamma\in \Gamma$, we consider the earthquake that fixes $w_0$, and pick $\rho(\gamma)$ such that the image of $\gamma(w_0)$ under this earthquake is $\rho(\gamma)w_0$. This is easily seen to be a homomorphism, and we define $E_\lambda(\bH/\Gamma)= \bH/\rho(\Gamma)$. The homomorphism $\rho$ directly defines a marking on $\bH/\rho(\Gamma)$ from a marking on $\bH/\Gamma$, so we get that earthquakes are well-defined on Teichm\"uller space. 

\section{Horocyclic foliations}

A very important construction, which Thurston introduced in \cite{T}, explains how, given a hyperbolic surface $X$ and a certain lamination $\lambda$, we can construct a measured foliation on $X$. Here $\lambda$ should be maximal, i.e., all the complementary regions should be triangles. (Some people call this ``complete" instead of ``maximal"). But $\lambda$ need not support a measure.  

The construction begins by defining the foliation on each complementary triangle using horocycles  based at each ideal vertex.  This gives a foliation of the triangle minus a piece in the center, which can be collapsed to become a three pronged singularity without affecting the foliation along the edges of the triangle.  
\begin{figure}[h!]
\includegraphics[width=.8\linewidth]{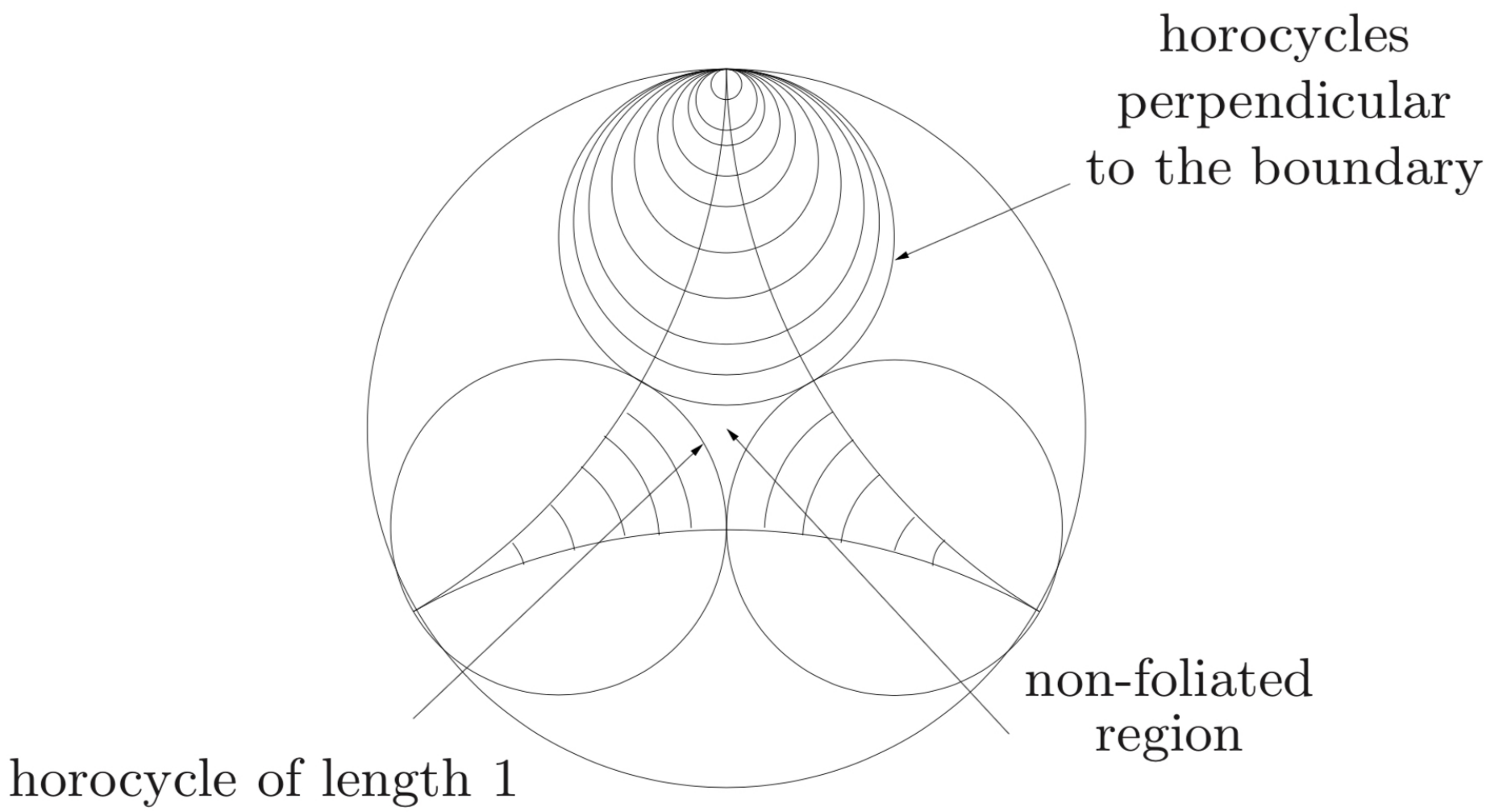}
\caption{A picture from \cite{PapTh} of the horocyclic foliation of a triangle. }
\label{F:foliation}
\end{figure}
The foliation naturally carries a transverse measure in which the measure assigned to the set of leaves passing through a segment of an edge of the triangle is the  length of that segment.
\begin{figure}[h!]
\includegraphics[width=.8\linewidth]{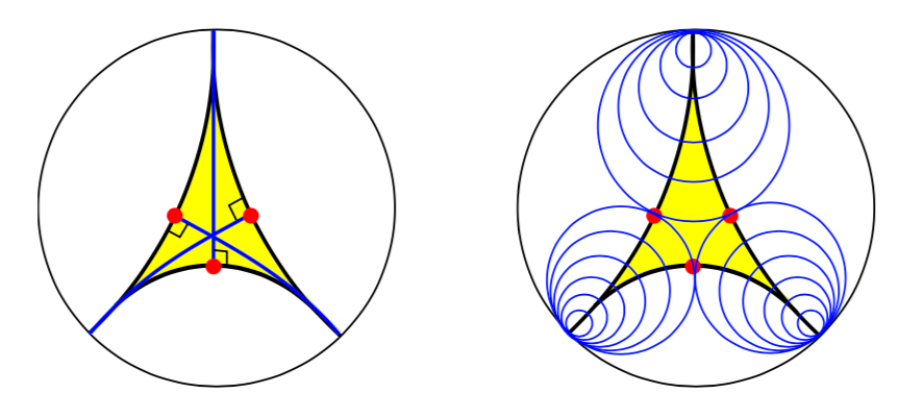}
\caption{A picture from \cite{Mar} of the horocyclic foliation of a triangle. }
\label{F:MarShear}
\end{figure}

In this way we can foliate most of $X$, but the foliation is not yet defined on the vast majority of leaves of $\lambda$ which do not bound complementary regions. However, the partial foliation defined thus far can be checked to be Lipschitz, and hence the associated line field extends continuously to a line field that can be integrated because it is Lipschitz. (One can work with vector fields if desired instead of line fields, for example by working locally.) This gives a map 
$$F_\lambda : \cT_g \to \MF(\lambda),$$
where $\MF(\lambda)$ is the set of measured foliations $\mu$ transverse to $\lambda$, i.e. for which  $(\lambda, \mu)\notin \Delta$. (Because $\lambda,\mu$ are literally transverse, there is an associated quadratic differential whose horizontal and vertical foliations are $\lambda$ and $\mu$, and this implies that $(\lambda, \mu)\notin \Delta$ as discussed above. But one could also just define $\Delta$ to be the set of pairs not associated to a quadratic differential.) 


Our goal in this section is to sketch a proof the following result of Thurston, compare to \cite[Proposition 4.1]{T}. You can choose to accept this theorem as a black box and skip to the next section now. 

\begin{thm}\label{T:shear}
$F_\lambda$ is a homeomorphism
\end{thm}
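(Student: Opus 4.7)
The plan is to construct an explicit candidate inverse $G_\lambda : \MF(\lambda)\to \cT_g$ and verify that $F_\lambda$ and $G_\lambda$ are continuous maps that invert one another. The geometric core of the argument is that the horocyclic foliation encodes \emph{shear parameters} between the complementary ideal triangles of $\lambda$, and that such shear parameters are enough to reconstruct the hyperbolic structure by regluing ideal triangles.

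First I would set up shear coordinates. For each ordered pair of complementary triangles $T_1,T_2$ sharing a boundary leaf $\ell$ of $\lambda$, define the shear $s_X(T_1,T_2)\in\bR$ as the signed hyperbolic distance along $\ell$ between the feet of perpendiculars dropped from the vertices of $T_1,T_2$ opposite to $\ell$. For an arbitrary pair $T_1,T_2$, or for a transverse arc $k$ from an interior point of $T_1$ to an interior point of $T_2$, define $s_X(k)$ by adding up the shears across each leaf of $\lambda$ crossed by $k$; because most leaves of $\lambda$ do not bound a triangle, this ``sum'' must be interpreted as the value of a transverse Hölder cocycle on $\lambda$, in the sense of Bonahon. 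I would then check, by an explicit local computation inside a single complementary triangle, that the transverse measure of $F_\lambda(X)$ on $k$ equals $s_X(k)$: on the portion of $k$ in a triangle, both quantities are the arclength along the boundary leaves between the entry and exit horocycles, and there is no contribution while crossing a leaf of $\lambda$ since horocycles are tangent to $\lambda$ there.

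This identification yields injectivity of $F_\lambda$ almost immediately, since a hyperbolic structure on the complement of $\lambda$ (and hence on all of $X$, as $\lambda$ has measure zero) is determined by the conformal class of each complementary triangle (which is rigid, being ideal) together with the gluing shears. To construct $G_\lambda$, I would reverse this: given $\mu\in\MF(\lambda)$, interpret the transverse measure of $\mu$ across arcs between complementary triangles as a prescription of shears, and build a hyperbolic surface by taking the finitely many ideal triangles and gluing them along the leaves of $\lambda$ with the prescribed offsets. The transversality condition $(\lambda,\mu)\notin \Delta$ ensures that no shear is infinite and that the gluing is well-defined. One must check that the resulting surface is complete, has the correct topology and marking, and that $F_\lambda\circ G_\lambda=\Id$; this reduces to the same local computation as above.

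Finally I would address continuity of both $F_\lambda$ and $G_\lambda$. Continuity of $F_\lambda$ follows from the fact that horocycles vary continuously with the hyperbolic metric, plus the Lipschitz extension argument already invoked in the construction. Continuity of $G_\lambda$ follows because the shear cocycle varies continuously with $\mu$ in the topology on $\MF$ (which is generated by intersection numbers with simple closed curves, and hence detects the measure on any transverse arc), and because small changes in the gluing shears produce small changes in the resulting hyperbolic structure. The main obstacle, and the one that requires genuine work rather than bookkeeping, is controlling the behavior across the uncountable family of non-boundary leaves of $\lambda$: although $X\setminus\lambda$ has only finitely many components, the shear data is spread continuously across $\lambda$, so both the existence of the cocycle and the continuity of $\mu\mapsto s_\mu$ ultimately rely on approximating $\lambda$ by a train track carrying it, doing the construction with the finite train-track weights, and showing that different approximations yield compatible data in the limit. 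Once this approximation scheme is in place, the remaining verifications are formal, and the theorem follows.
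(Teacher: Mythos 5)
Your overall strategy --- read shear data off the horocyclic foliation, then reconstruct the surface by regluing the ideal triangles --- is the same as the paper's, but two steps are genuinely problematic. First, the identity you propose to verify, that the transverse measure of $F_\lambda(X)$ on a connecting arc $k$ equals the accumulated shear $s_X(k)$, is false as stated: the shear is a \emph{signed} quantity (and genuinely takes both signs), whereas a transverse measure is non-negative; for instance, an arc $k$ contained in a single leaf of the horocyclic foliation has $\mu$-measure zero but in general nonzero accumulated shear. The correct relationship, which is what the paper uses, is that the shear between $T_1$ and $T_2$ is recovered as the $\mu$-measure of a \emph{different} arc --- a segment lying along a boundary leaf of $\lambda$ (hence transverse to $\mu$), namely the offset between where the two singular leaves land --- not as the $\mu$-measure of the transverse arc $k$ itself. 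Since both your injectivity argument and your definition of $G_\lambda$ rest on this identification, it needs to be repaired. (A small additional point: the horocycles are orthogonal, not tangent, to the edges of the complementary triangles.)

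Second, and more seriously, the step you correctly flag as ``the main obstacle'' is exactly where the content lies, and deferring it to ``Bonahon's cocycles'' or ``a train-track approximation scheme'' leaves the proof without its engine. Since two complementary triangles of a generic maximal lamination never share an edge, ``gluing with prescribed offsets'' is not yet a construction: one must compose the relative placements across the uncountably many leaves separating $T_1$ from $T_2$ and show that the composition converges and is computable from $\mu$ alone. The paper does this by writing the M\"obius transformation carrying an edge of $T_1$ to an edge of $T_2$ as an infinite product $S=\prod_{i\in I}S_i$ over the countable totally ordered set of intervening triangles, and proving convergence via $\sum\|S_i-1\|<\infty$: each $S_i$ is a time-one horocycle flow conjugated by a large amount of geodesic flow, and the crossings of each spike are spaced by a definite amount, giving a geometric series. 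Note also that the shear alone does not determine the position of $T_2$ relative to $T_1$; one needs the full transformation $S$ \emph{together with} the shear, and both must be defined from $\mu$. Finally, by developing the triangles into $\bH$ and realizing the surface as $\bH/\Gamma$, the paper sidesteps the completeness issue that a naive gluing of ideal triangles raises (recall that gluing two ideal triangles yields an incomplete surface whose completion is a pair of pants); your version would need a separate completeness argument.
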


Often this homeomorphism is followed with a certain map $\MF(\lambda)\hookrightarrow \bR^{6g-6}$ and the result is called shear coordinates for Teichm\"uller space \cite{B}, however we may refer to $F_\lambda$ itself as shear coordinates. 

To prove Theorem \ref{T:shear},  we will explicitely build the inverse of $F_\lambda$. We will build explicitely a hyperbolic surface $X$ whose horocyclic foliation is $\mu$, for any $\mu\in \MF(\lambda)$. 

Imagine we already had such an $X$ with $\mu=F_\lambda(X)$. Then we can lift $\lambda$ to $\tilde{\lambda}\subset\bH$. If $X=\bH/\Gamma$, then $\tilde{\lambda}$ is invariant under $\Gamma$. The idea of the proof is to construct $\tilde{\lambda}$ just from the data of $\mu$. 

To do this it helps to better understand  $\tilde{\lambda}$, assuming $\mu=F_\lambda(X)$. It is this understanding that will allow us to define $\tilde{\lambda}$ in the case when $\mu$ is arbitrary. Let $\tilde{\mu}$ denote the preimage of $\mu$ in $\bH$. 

Consider two triangles $T_1$ and $T_2$ that are complementary regions for $\tilde{\lambda}$. Suppose there is a segment $A$ of $\tilde{\mu}$ that goes from an edge of $T_1$ to an edge of $T_2$, as in Figure \ref{F:T1T2}. 
\begin{figure}[h!]
\includegraphics[width=.5\linewidth]{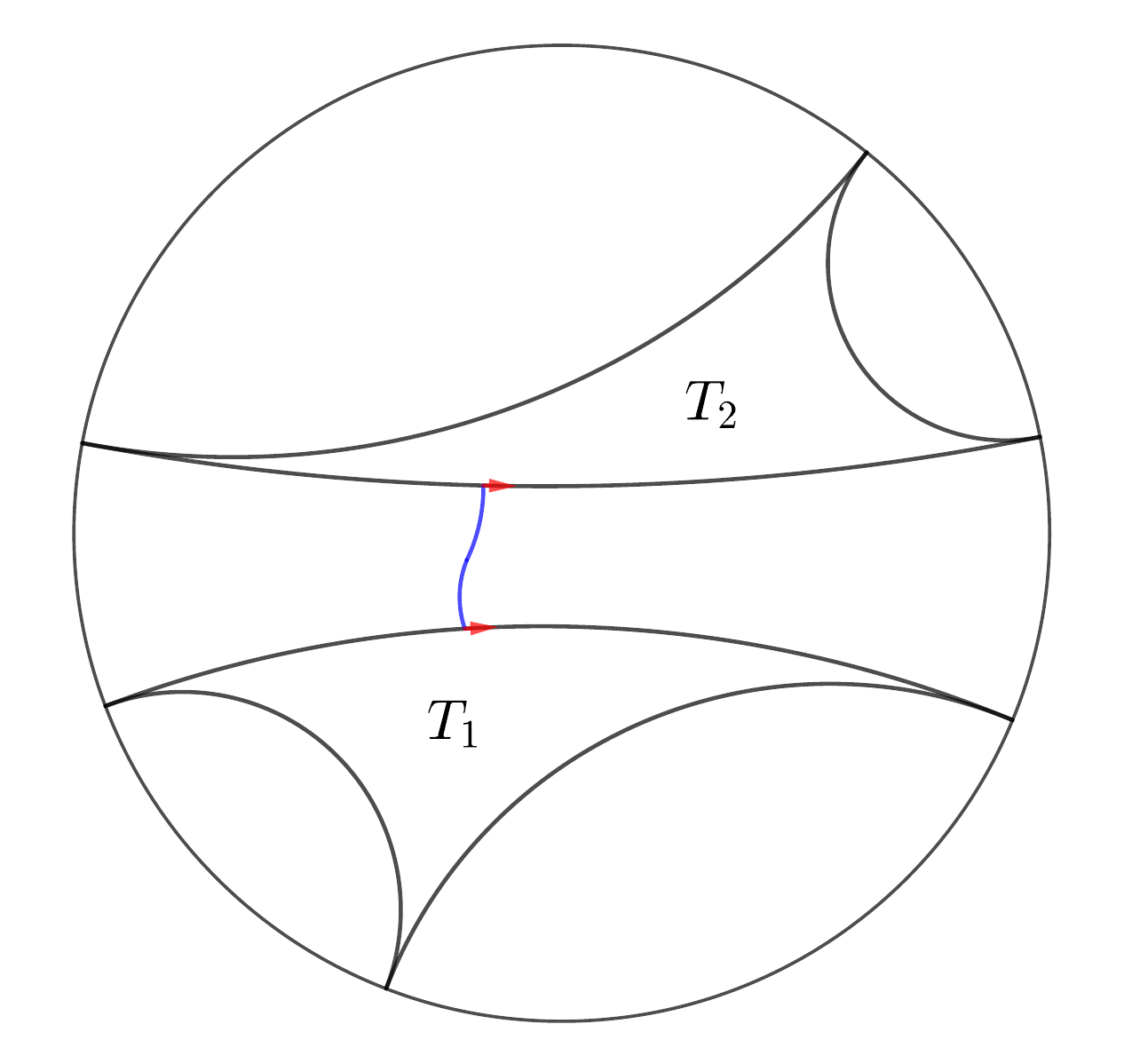}
\caption{}
\label{F:T1T2}
\end{figure}
Consider unit vectors $v_1$ and $v_2$ that are based at the start and end points of $A$ and are tangent to the edges of the triangles. We want to compute the M\"obius transformation $S$, which we view as a two-by-two matrix, that maps $v_1$ to $v_2$. 

This  M\"obius transformation, together with the  ``shear", allows us to recover the position of $T_2$ relative to $T_1$. That is, there is a one parameter family of locations for a triangle $T_2$ with an edge generated by $v_2$, and we call this parameter the shear. The shear can be determined by comparing the distances from the singular leaf in each of the two triangles, as in Figure \ref{F:NewTriangle}. 

\begin{figure}
\centering
  \centering
  \includegraphics[width=.5\linewidth]{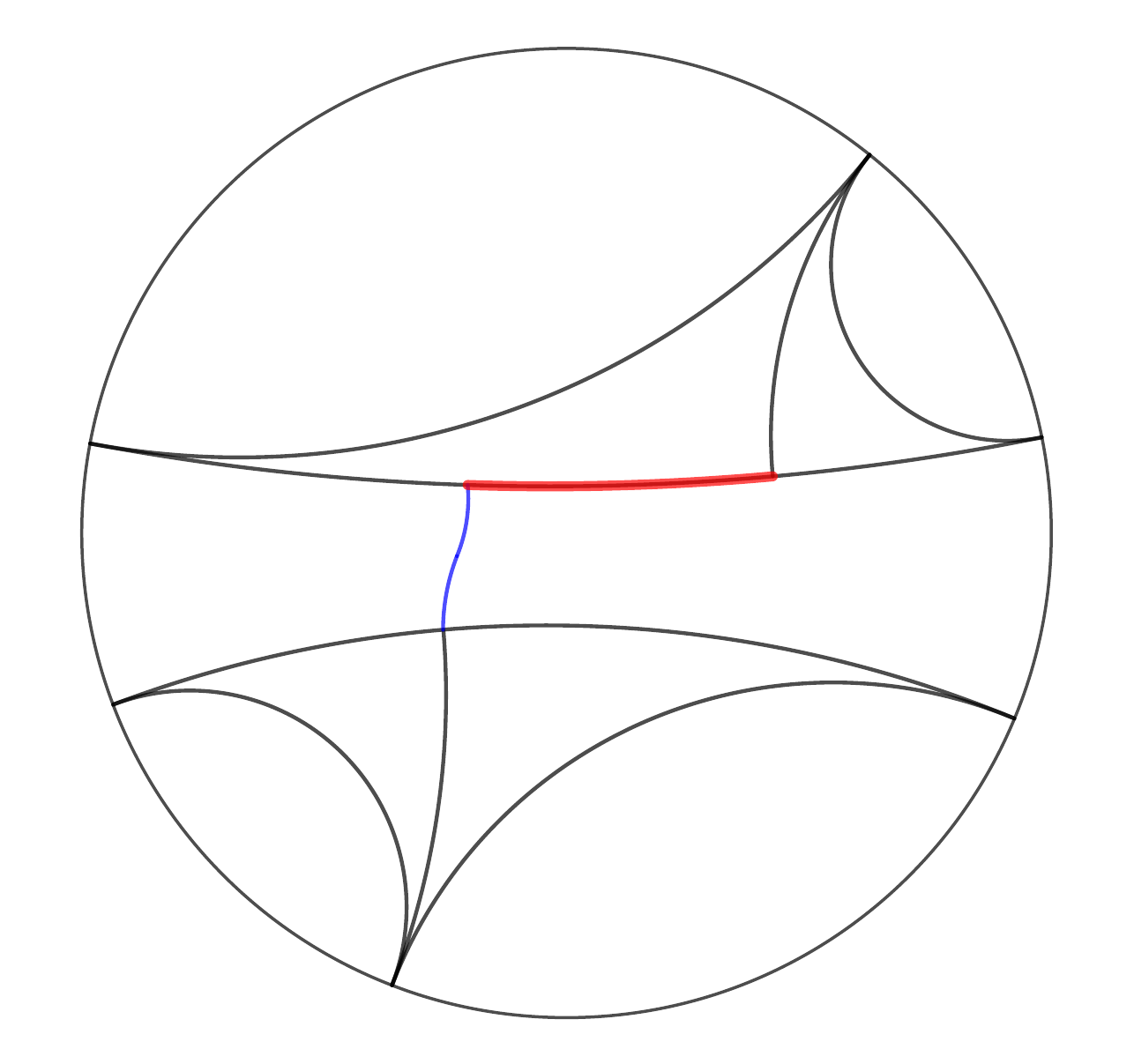}
\caption{The position of $T_2$ relative to $T_1$: Knowing an edge of $T_2$ gives a one parameter family of possibilities for $T_2$. To determine $T_2$, we also need to use the shear, which is the signed length of the red segment. The two half rays in black are orthogeodesics.}
\label{F:NewTriangle}
\end{figure}

Let $I$ be the set of triangles in $\bH$ that are crossed by the segment $A$. Note that $I$ is a countable totally ordered set, but the order is not a well-order. For each $i\in I$, define $v_i^+$ and $v_i^-$ to be the vectors tangent to the edges of the corresponding triangle at the intersection of the edges and $A$, as in Figure \ref{F:viplus}.

\begin{figure}[h!]
\includegraphics[width=.8\linewidth]{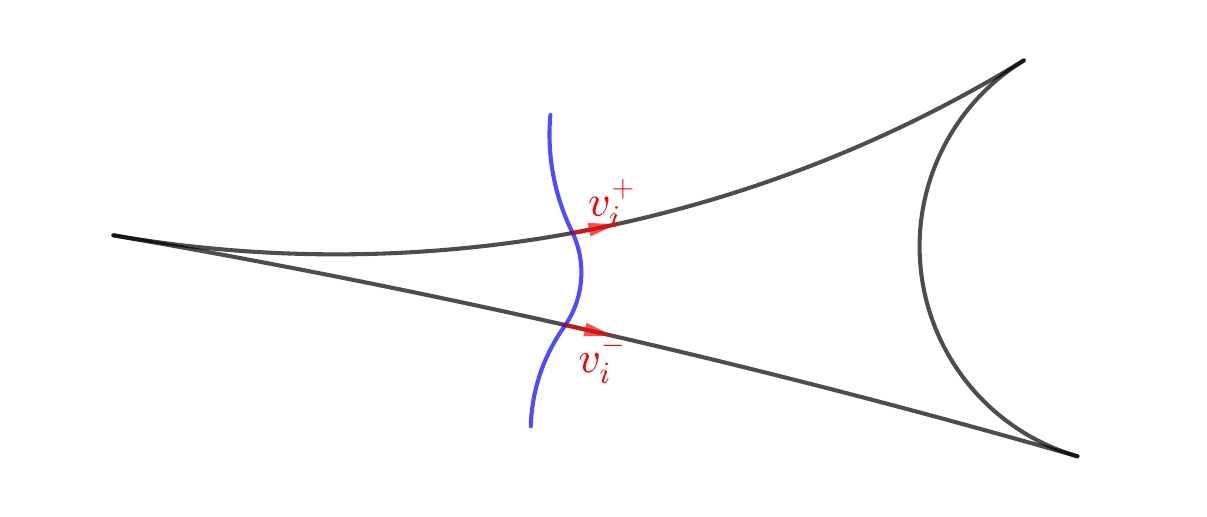}
\caption{The definition of  $v_i^+$ and $v_i^-$.}
\label{F:viplus}
\end{figure}

\begin{figure}[h!]
\includegraphics[width=.8\linewidth]{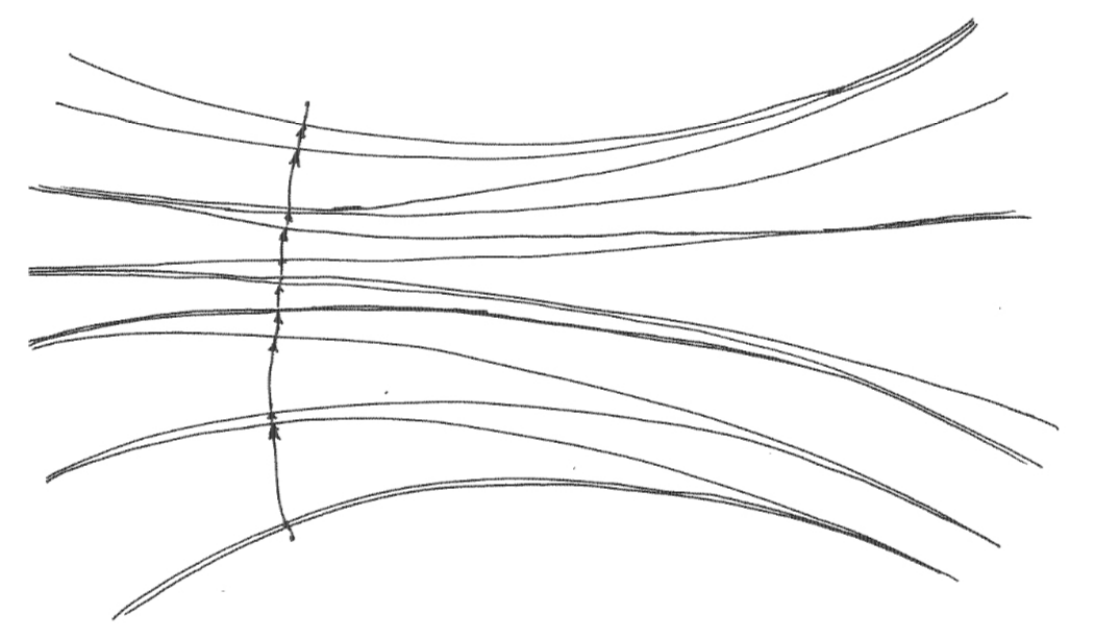}
\caption{Thurston's illustration from \cite{T} of how $A$ crosses $\tilde{\lambda}$. Its intersection with each triangle corresponds to either a stable or unstable horocycle, according to whether the third side of the triangle is to the left or to the right of $A$.}
\label{F:InfiniteProd}
\end{figure}

Let $S_i$ be the M\"obius transformation taking $v_i^-$ to $v_i^+$. We now wish to show that 
$$S = \prod_{i\in I} S_i.$$
That is, the M\"obius transformation moving the vector across infinitely many triangles is the product of the M\"obius transformation moving the vector across each of these triangles. We need a definition to even make sense of what such an infinite product should mean. 

\begin{defn}
Let $I$ be a countable totally ordered set, and let $S_i, i\in I$ be elements of a fixed Banach algebra. Then we say that $\prod_{i\in I} S_i$ is well-defined and equal to $S$ if, for any increasing chain $$I_0\subset I_1 \subset I_2\subset \cdots \subset I$$ of finite sets that exhausts $I$, we have $\lim_{k\to \infty} \prod_{i\in I_k} S_i=S$. 
\end{defn}

The only Banach algebra we will use is the algebra of 2 by 2 matrices, and the only result we will use is the following. 

\begin{lem}
For elements $s_i$ of any Banach algebra indexed by a countable totally ordered set, if $\sum \|s_i\| < \infty$, then $\prod (1+s_i)$ is well-defined. 
\end{lem}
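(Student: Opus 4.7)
The plan is to show that the finite partial products $P_J := \prod_{i \in J}(1+s_i)$, indexed by finite subsets $J \subset I$ with the factors taken in the order inherited from $I$, form a Cauchy net with respect to inclusion. Once this is established, both the existence of the limit along any exhaustion $I_0 \subset I_1 \subset \cdots$ and its independence of the choice of exhaustion follow immediately, since any two exhaustions eventually contain any prescribed finite subset of $I$.

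The key estimate comes from expanding
$$P_J = \sum_{A \subset J} \prod_{i \in A} s_i,$$
where the inner product over $A$ is again taken in the order inherited from $I$. For finite $J \subset J'$, set $K := J' \setminus J$; the difference $P_{J'} - P_J$ is then exactly the partial sum over those subsets $A \subset J'$ that meet $K$. Passing to norms and collapsing the resulting scalar sum back into a product (which is legitimate because the scalar product $\prod_{i \in J'}(1 + \|s_i\|)$ is commutative) yields
$$\|P_{J'} - P_J\| \leq \prod_{i \in J}(1+\|s_i\|)\left(\prod_{i \in K}(1+\|s_i\|) - 1\right) \leq M\left(\exp\Bigl(\sum_{i \in K}\|s_i\|\Bigr) - 1\right),$$
where $M := \exp\bigl(\sum_{i \in I}\|s_i\|\bigr) < \infty$ and $1+x \leq e^x$.

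To extract the Cauchy condition, given $\epsilon > 0$, pick a finite $J_0 \subset I$ with $\sum_{i \in I \setminus J_0}\|s_i\|$ so small that $M\bigl(e^{\sum_{i \in I \setminus J_0}\|s_i\|} - 1\bigr) < \epsilon/2$; such a $J_0$ exists because $\sum \|s_i\| < \infty$. For any finite $J, J' \supset J_0$, apply the displayed estimate twice, once to $J \subset J \cup J'$ and once to $J' \subset J \cup J'$, noting that the relevant $K$ is contained in $I \setminus J_0$ in each case; the triangle inequality then gives $\|P_J - P_{J'}\| < \epsilon$, which is exactly the Cauchy condition along inclusion. No serious obstacle is anticipated: this is essentially the classical proof that $\sum \|a_i\| < \infty$ forces absolute convergence of the infinite product $\prod(1 + a_i)$ of scalars, the only new wrinkle being a small amount of bookkeeping to preserve the prescribed order of factors, which is harmless because the upper bounds pass immediately to norms where order is irrelevant.
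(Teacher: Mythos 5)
Your proof is correct and follows essentially the same route as the paper: both arguments rest on the uniform bound $\prod_i(1+\|s_i\|)\leq \exp\bigl(\sum_i\|s_i\|\bigr)<\infty$ and control the change in a partial product caused by inserting or deleting factors, from which the Cauchy property along exhaustions follows. Your version is somewhat more complete in the bookkeeping (the subset expansion handles the insertion of a whole finite set $K$ at once, and you spell out the Cauchy-net argument that the paper leaves implicit after its single-factor estimate), but the underlying idea is identical.
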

 
\begin{proof}
 Note that for elements $s_1, \ldots, s_n$ of a Banach algebra, and $1\leq m\leq n$, we have 
 \begin{eqnarray*}
 &&\|(1+s_1) \cdots (1+s_n) - (1+s_1) \cdots (1+s_{m-1})(1+s_{m+1}) \cdots (1+s_n)\| 
  \\&&\leq \|s_m\| \prod_{i=1}^n (1+\|s_i\|). 
\end{eqnarray*}
  In the context at hand, the assumption gives that $\prod (1+\|s_i\|)$ is bounded by some constant $C$, so we get that the effect of removing or adding a term $s_n$ is at most $C\|s_n\|$.  
\end{proof}

To apply this lemma, we need to show the two-by-two matrices (M\"obius transformations) $S_i$ that we will use are close enough to the identity. 

\begin{lem}\label{L:summable} 
For the $S_i$ arising as above from $\tilde{\lambda}$ and $A$, if we set $s_i=S_i-1$, then $\sum \|s_i\| < \infty$. (Here $1$ denotes the two-by-two identity matrix.) 
\end{lem}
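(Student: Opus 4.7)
The plan is to set $\ell_i := \operatorname{length}_{\bH}(A \cap T_i)$ and show that $\|s_i\|$ is controlled linearly by $\ell_i$ once $\ell_i$ is small. Since $A$ is a compact arc and the pieces $\{A \cap T_i\}$ are pairwise disjoint subarcs of $A$, we automatically have
$$\sum_i \ell_i \leq \operatorname{length}_{\bH}(A) < \infty.$$
The first step is to identify each $S_i$ as a parabolic. Inside $T_i$, the arc $A$ coincides with a horocyclic arc centered at one of the three ideal vertices of $T_i$, which I call $p_i$ (as in Figure \ref{F:InfiniteProd}). The two edges of $T_i$ through which $A$ enters and exits are precisely those adjacent to $p_i$, and since horocycles at $p_i$ are orthogonal to geodesics ending at $p_i$, the unique parabolic fixing $p_i$ that translates the horocycle through $x_i^{\pm}$ by hyperbolic length $\ell_i$ carries $v_i^-$ to $v_i^+$. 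In particular, $S_i$ is that parabolic.

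Next I would bound $\|s_i\|$ linearly in $\ell_i$. Fix a basepoint $x_0 \in A$ and let $K$ be a compact neighborhood of $A$ in $\bH$. For any parabolic $g$ fixing a point $p \in \partial \bH$, displacement along a horocycle at $p$ is constant, while it is rescaled by the factor $e^d$ when one moves a signed hyperbolic distance $d$ across horocycles away from $p$ (a one-line calculation in the upper half plane). Since the horocycle carrying $A \cap T_i$ meets $K$, the perpendicular hyperbolic distance from $x_0$ to that horocycle is at most $\operatorname{diam}(K)$, so
$$d_{\bH}(x_0, S_i x_0) \leq e^{\operatorname{diam}(K)} \ell_i.$$
A short calculation with the nilpotent generators of the parabolic subgroups of $PSL(2,\bR)$ then shows that, for parabolics sufficiently close to the identity, the operator norm $\|S_i - I\|$ is comparable to this hyperbolic displacement. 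Hence $\|s_i\| \leq C \ell_i$ for some $C = C(K, x_0)$.

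The hard part is this last comparison: the operator norm on $M_2(\bR)$ is not $PSL(2,\bR)$-invariant, and for parabolics far from the identity the matrix norm can be much larger than the hyperbolic displacement at any fixed basepoint (it grows exponentially in the displacement). Fortunately, the convergence $\sum \ell_i < \infty$ forces $\ell_i \to 0$, so all but finitely many $S_i$ lie in the linear regime where the comparison applies. The finitely many exceptional terms contribute only a bounded amount to $\sum \|s_i\|$, and the tail is controlled by $C \sum_i \ell_i$, giving $\sum_i \|s_i\| < \infty$.
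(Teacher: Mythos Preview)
Your argument is correct and takes a genuinely different route from the paper's. Both approaches identify each $S_i$ as parabolic, but you then bound $\|s_i\|$ by a constant times the hyperbolic length $\ell_i$ of the horocyclic arc $A\cap T_i$ and sum these directly as disjoint pieces of $A$. The paper instead writes $S_i$ as a time-one horocycle conjugated by geodesic flow for time $t_i$, obtains $\|s_i\|\asymp e^{-t_i}$, then partitions the crossings according to which spike (triangle corner) they traverse and sums a geometric series on each spike, using that consecutive crossings of a given spike are a definite hyperbolic distance apart. Since in fact $\ell_i=e^{-t_i}$, the two proofs are bounding the same quantity; yours short-circuits the spike/geometric-series bookkeeping entirely, while the paper's proof is more self-contained in that it never appeals to the finiteness of $\operatorname{length}_{\bH}(A)$. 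Two places in your write-up could use one more sentence. First, the comparison ``$\|S_i-I\|$ is comparable to $d_{\bH}(x_0,S_ix_0)$'' for parabolics near the identity holds \emph{uniformly} over all fixed points $p_i$ because the nilpotent cone in $\mathfrak{sl}_2(\bR)$ meets the rotation subalgebra $\mathfrak{so}(2)$ only at $0$, so by compactness of the unit nilpotents the orbit map $g\mapsto gx_0$ is uniformly bi-Lipschitz on small parabolics. Second, ``$A$ is a compact arc'' is not by itself enough for finite hyperbolic length; what you are really using is that $A$ is an integral curve of the Lipschitz horocyclic line field (as the paper notes when constructing $F_\lambda$), hence a unit-speed $C^1$ curve, so reaching $\partial T_2$ at a finite parameter value forces $\operatorname{length}_{\bH}(A)<\infty$.
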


\begin{proof}
Each $S_i$ can be realized as a time one stable or unstable horocycle flow matrix conjugated by geodesic flow, as in Figure \ref{F:Horo}. 
\begin{figure}[h!]
\includegraphics[width=.8\linewidth]{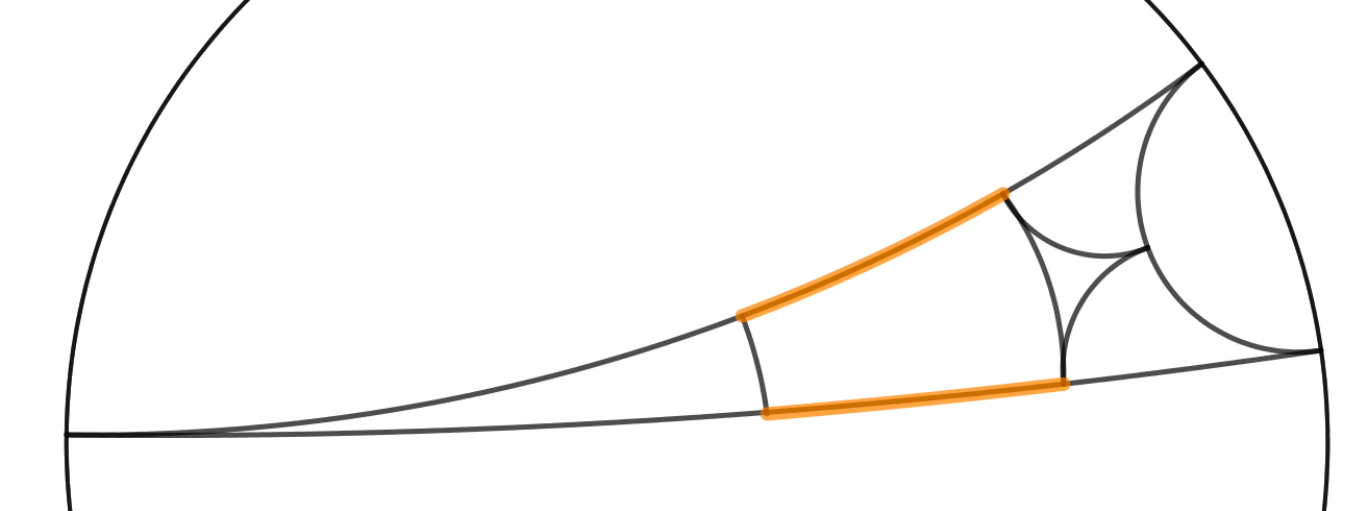}
\caption{$S_i$ can be written as geodesic flow along one orange segment, then horocycle flow for time one, and then geodesic flow backwards along the second orange segment.}
\label{F:Horo}
\end{figure}
The basic computation 
$$ \left(\begin{array}{cc} e^{-t/2}& 0\\ 0 & e^{t/2}\end{array}\right) \left(\begin{array}{cc} 1& 1\\ 0 & 1\end{array}\right) \left(\begin{array}{cc} e^{t/2}& 0\\ 0 & e^{-t/2}\end{array}\right) =\left(\begin{array}{cc} 1& 0\\ 0 & 1\end{array}\right) + \left(\begin{array}{cc} 0& e^{-t}\\ 0 & 0\end{array}\right)$$
shows that the $s_i$ are small whenever the amount of geodesic flow used in the conjugation is large. 

\begin{figure}[h!]
\includegraphics[width=.5\linewidth]{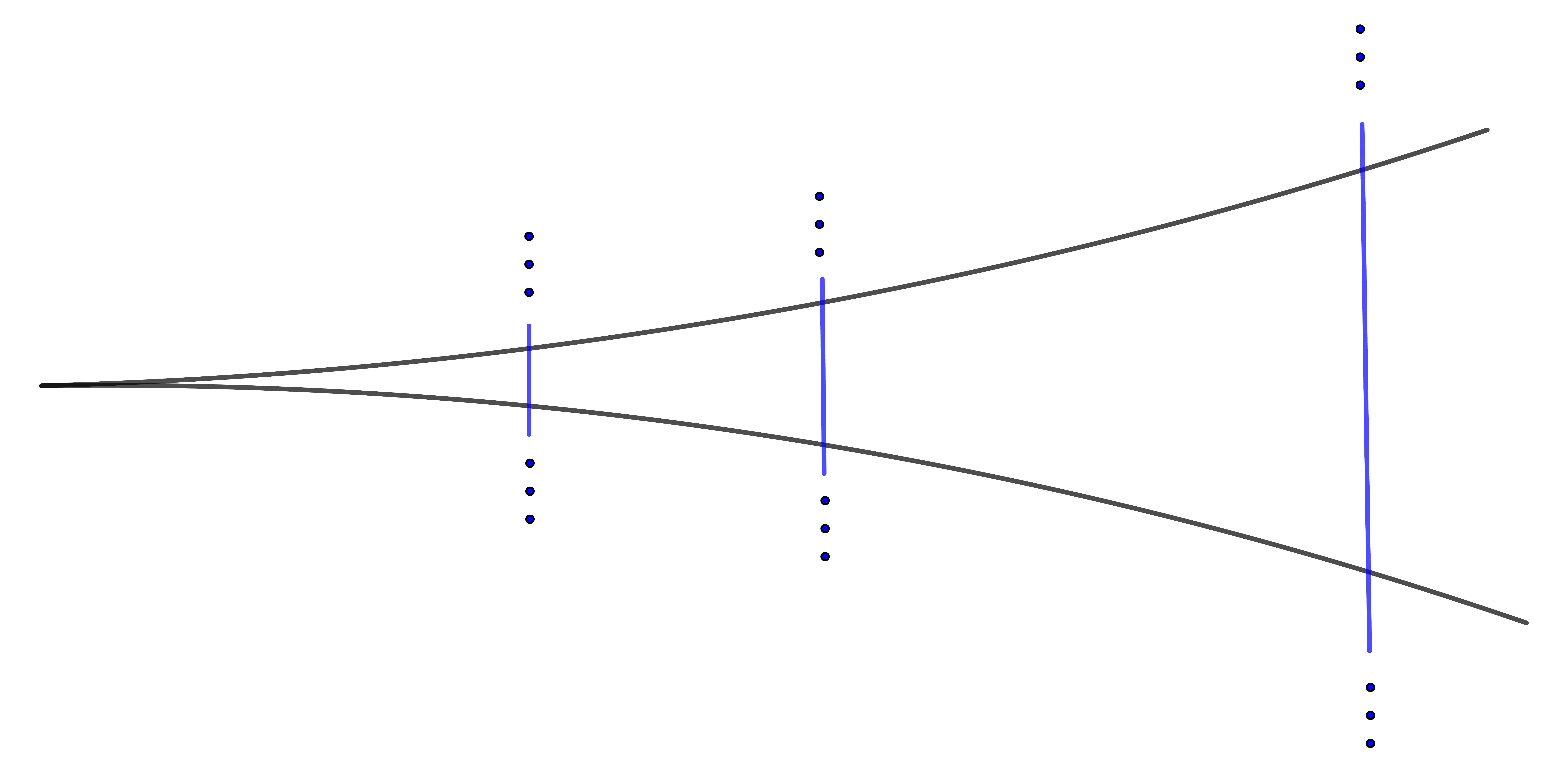}
\caption{A schematic of the intersections of an arc of the foliation with a spike.}
\label{F:Spike}
\end{figure}
We partition all the crossings of our leaf segment $A$ into finitely many subsets according to which ``spike", or corner of a triangle, they cross, see Figure \ref{F:Spike}. Then we show that the sum of the $\|s_i\|$ for each spike is bounded by a geometric progression, because the distance along the spike between neighboring crossings is always bounded below. 
\end{proof}

We now have the desired fact. 

\begin{lem}
$S = \prod_{i\in I} S_i.$
\end{lem}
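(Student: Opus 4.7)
The plan is to fix any exhausting chain $I_0 \subset I_1 \subset \cdots \subset I$ of finite subsets and show that the partial products $\tilde{S}_k := \prod_{i \in I_k} S_i$ converge in the $2 \times 2$ matrix algebra to $S$. By the definition of the infinite product, this will give the lemma for any exhausting chain, hence for the specific one at hand.

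First I would handle the finite case, where $I$ itself is finite. Here the triangles $T_{i_1}, \ldots, T_{i_n}$ are consecutive along $A$ with each sharing an edge with the next, so the exit vector $v_{i_r}^+$ of $T_{i_r}$ coincides with the entry vector $v_{i_{r+1}}^-$ of $T_{i_{r+1}}$. The M\"obius transformations then telescope, and $S = S_{i_1} S_{i_2} \cdots S_{i_n}$ directly.

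For general $I$, fix $\varepsilon > 0$ and, using Lemma \ref{L:summable}, choose $k$ so large that $\sum_{i \notin I_k} \|s_i\| < \varepsilon$. Writing the triangles of $I_k$ in the order along $A$ as $T_{j_1}, \ldots, T_{j_n}$, the geometric key is a decomposition
$$S = B_0 \cdot S_{j_1} \cdot B_1 \cdot S_{j_2} \cdots S_{j_n} \cdot B_n,$$
where each ``bridge'' $B_r$ is the M\"obius transformation carrying $v_{j_r}^+$ to $v_{j_{r+1}}^-$ along the sub-arc of $A$ that crosses only triangles of $I \setminus I_k$ (with $B_0$ and $B_n$ handling the initial and final stretches from $v_1$ and to $v_2$). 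Rerunning the spike-by-spike estimate from Lemma \ref{L:summable} locally on each gap $G_r \subset I \setminus I_k$ yields $\|B_r - 1\| \leq C' \sum_{i \in G_r} \|s_i\|$, and summing together with the standard Banach-algebra estimate (removing or inserting a factor $1+s_i$ perturbs a product by at most $\|s_i\| \prod_i (1+\|s_i\|)$), one obtains $\|S - \tilde{S}_k\| \leq C\varepsilon$, hence $\tilde{S}_k \to S$.

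The main obstacle is making the bridge decomposition and the estimate on $\|B_r - 1\|$ rigorous, because each $B_r$ is itself morally a (possibly infinite) product of $S_i$'s—the very kind of object we are trying to define. One clean workaround is to avoid defining $B_r$ directly at all: approximate $\tilde{\lambda}$ by discrete sub-laminations $\tilde{\lambda}^{(m)}$ (each with finitely many leaves, hence finitely many triangles crossing $A$), where the identity $S^{(m)} = \prod_{i \in I^{(m)}} S_i^{(m)}$ holds by the finite case; the geometric transformations $S^{(m)}$ converge to $S$ by the well-definedness argument for earthquakes on $\bH$ sketched earlier, while the matrix products converge to $\prod_{i \in I} S_i$ by matching with truncations $\tilde{S}_k$ and absorbing the discrepancy into $\sum_{i \notin I_k} \|s_i\|$ via Lemma \ref{L:summable}.
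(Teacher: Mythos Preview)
The paper leaves this lemma as an exercise, so there is no detailed argument to compare against; I will assess your attempt on its own merits.

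Your bridge decomposition
\[
S \;=\; B_0 \, S_{j_1} \, B_1 \, S_{j_2} \cdots S_{j_n} \, B_n
\]
is exactly the right framework: each factor is the unique M\"obius transformation carrying one specified unit tangent vector on $A$ to the next, so the identity is immediate. The finite case is also fine. And once the bridge estimate $\|B_r-1\|\le C'\sum_{i\in G_r}\|s_i\|$ is in hand, combining it with the Banach--algebra removal estimate and Lemma~\ref{L:summable} gives $\|S-\tilde S_k\|\to 0$ as you say.

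The gap is in justifying that bridge estimate. ``Rerunning the spike-by-spike estimate'' does not obviously yield it: the spike argument bounds the individual $\|s_i\|$, whereas $B_r$ is a geometrically defined transformation between two tangent vectors, not a product of $S_i$'s (that is precisely what is to be proved). A clean direct route is: the unit-vector field $V$ along $A$ (tangent to $\tilde\lambda$ on $A\cap\tilde\lambda$, pointing toward the relevant cusp inside each triangle) is continuous, indeed Lipschitz, so $\|B_r-1\|$ is controlled by the arc-length $|G_r|$ of the gap. Since the maximal lamination has zero area, $A\cap\tilde\lambda$ has zero length in $A$, whence $|G_r|=\sum_{i\in G_r}\ell_i$ with $\ell_i$ the horocyclic arc-length in triangle $i$; and $\ell_i$ is comparable to $\|s_i\|$ by the conjugated-horocycle description in Lemma~\ref{L:summable}. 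This fills the hole.

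Your workaround via discrete sub-laminations and the earthquake well-definedness argument does not work as stated. If $\tilde\lambda^{(m)}$ has finitely many leaves, its complementary regions are not triangles, so there is no horocyclic foliation for $\tilde\lambda^{(m)}$ and no clear meaning for $S_i^{(m)}$; if instead you keep the horocyclic foliation of $\tilde\lambda$, then $S^{(m)}=S$ identically and the ``finite case'' just reproduces the bridge decomposition you already had. More fundamentally, the argument from the earthquake section concerns how \emph{earthquake maps} depend continuously on the lamination; the transformation $S$ here records relative positions of triangles via the horocyclic foliation and involves no earthquake, so that continuity result is not the right tool. Drop the workaround and instead supply the Lipschitz/zero-measure argument sketched above.
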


\begin{proof}
Left to the reader as an exercise. (The hardest parts have been done above for you!)
\end{proof}
%
%

Now, so far we've discussed the relative position of two triangles $T_1$ and $T_2$ which are joined by an arc $A$ of the transverse foliation. Figure \ref{F:Cocycle} shows that not all pairs of triangles are joined by such an arc $A$. The discussion may be clarified then by the following exercise. 

\begin{figure}[h!]
\includegraphics[width=.5\linewidth]{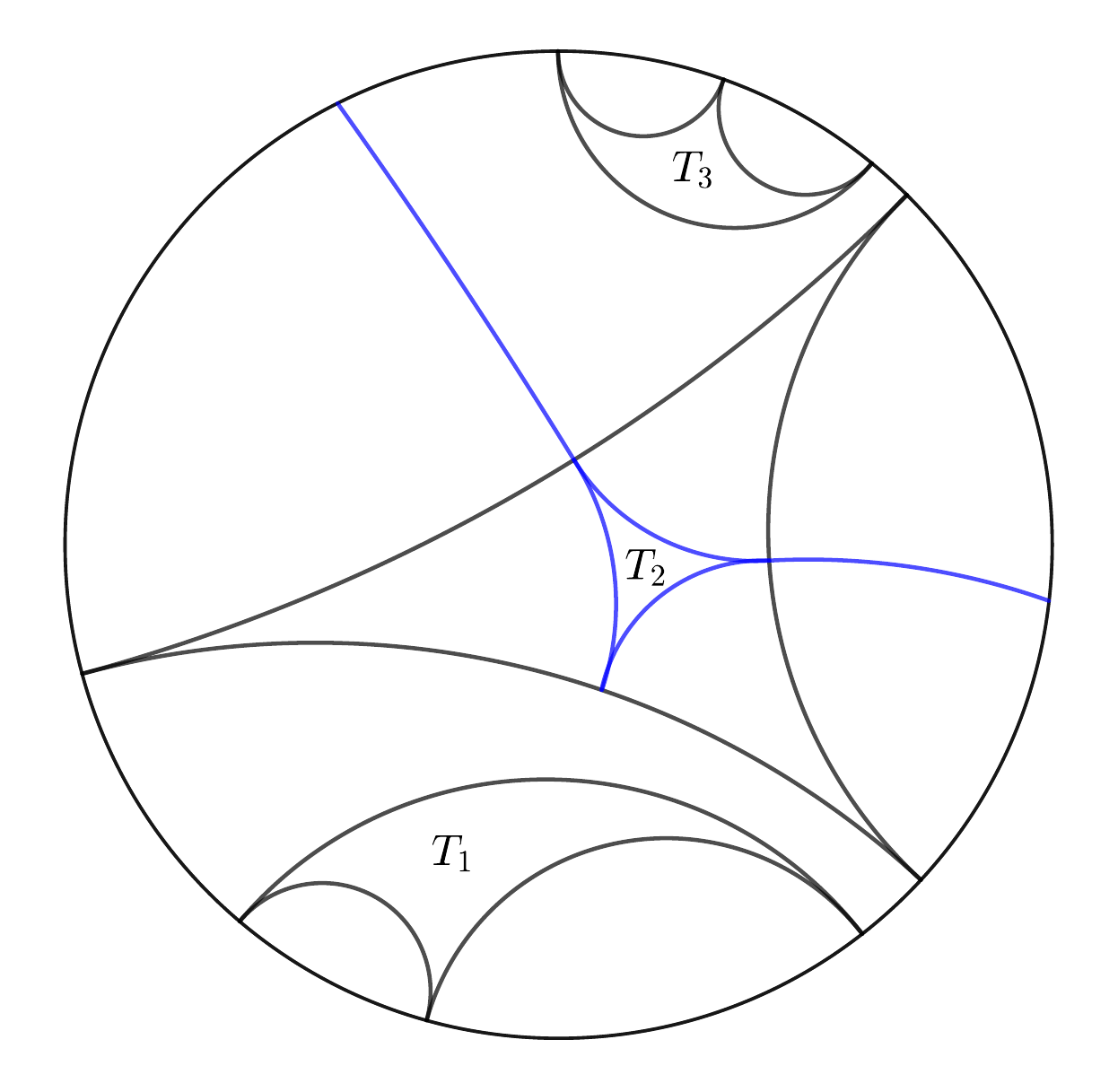}
\caption{$T_3$ is hidden from $T_1$, in that no leaf of the foliation intersects both $T_3$ and $T_1$.}
\label{F:Cocycle}
\end{figure}

\begin{exercise}
For any two triangles $T, T'$ of $\tilde{\lambda}$  there is a sequence of triangles $T=T_0, T_1, \ldots, T_n=T'$ of $\tilde{\lambda}$ such that $T_{i}$ lies in between $T_{i-1}$ and $T_{i+1}$, and there is an arc of $\tilde{\mu}$ from each triangle to the next. 
\end{exercise}

Now we have reached the point where we understand $\tilde{\lambda}$ and $\tilde{\mu}$ quite well, when $\mu=F_\lambda(X)$. In fact, we understand it so well that, from the position of one triangle of $\tilde{\lambda}$, we can exactly determine the positions of all the others using the M\"obius transformations $S$ and the shears. The reader may check their understanding so far by completing the following exercise.   

\begin{exercise}
Convince yourself that the above discussion amounts to a proof that $F_\lambda$ is one-to-one. 
\end{exercise}

\begin{proof}[Proof of Theorem \ref{T:shear}] 
Now we will see that the M\"obius transformations $S$ and the shears can be defined for arbitrary $\mu\in \MF(\lambda)$. In this way we will define $\tilde{\lambda}\subset \bH$, find it is invariant under a group $\Gamma$, and find $F_\lambda(\bH/\Gamma)=\mu$, building the inverse for $F_\lambda$ as desired. 

To start, we use the fact that, given any $\mu\in \MF(\lambda)$, one can isotope $\mu$ to be actually transverse to $\lambda$, and each singularity of $\mu$ is then in a well-defined complementary triangle of $\lambda$ independent of the isotopy. (The singularities of $\mu$ and the complementary triangles of $\lambda$ must be in bijection to each other, because they are both in bijection to the zeros of the associated quadratic differential. Formally speaking, one should write down a more rigorous proof.)   

Even without $X$, there is a topological version of $\tilde{\lambda}$ and $\tilde{\mu}$, defined up to isotopy on the universal cover of the topological surface. They are transverse. 

First, we remark that the shears are obviously defined only in terms of topological data. Indeed, the shear is the transverse measure of the red segment in Figure \ref{F:NewTriangle}. The key point is that whenever we took a hyperbolic length along an arc of a geodesic in $\tilde\lambda$, this was also the transverse measure assigned by $\tilde{\mu}$, because by definition the transverse measure for the horocycle foliation comes from hyperbolic length on the edges of each triangle.  

Next, we recall that each $S_i$ was defined as a conjugate of a time one horocycle flow. The amount of geodesic flow we conjugate by is again a transverse measure assigned by $\tilde{\mu}$, so we can define the $S_i$ from $\mu$ alone. We also check that Lemma \ref{L:summable} applies for arbitrary $\mu$, so we can define the infinite products $S$. 

Now, we can think of placing one triangle $T$ of $\tilde{\lambda}$ on $\bH$ in an arbitrary way. (This arbitrary choice reflects the fact that everything is only defined up to M\"obius transformations.) From this triangle, we can determine where we should put every triangle connected to $T$ by a transverse arc, by using $S(v_1)$ and the shear. Continuing in this way we can determine where we should put every triangle of $\tilde\lambda$. We can obtain the rest of $\tilde\lambda$ as the closure of the set of edges. 

Since the construction arises from objects on the surface, the resulting configuration of triangles in $\bH$ is invariant under a representation of this surface group into $PSL(2,\bR)$. More specifically, for $\gamma$ in $\pi_1$ of the topological surface on which $\mu$ is defined, we may consider a pair of triangles $T_1, T_2=\gamma(T_1)$ in the universal cover. The above discussion computes a M\"obius transformation $\rho(\gamma)$ taking $T_1$ to $T_2$. If $\Gamma$ is the image of $\rho$, then we get that $F_\lambda(\bH/\Gamma)=\mu$ as desired. (Note that $\Gamma$ is discrete because it stabilizes a non-trivial lamination.) This concludes our proof that $F_\lambda$ is a homeomorphism.   
\end{proof}

\begin{rem}
Because of group invariance, we can consider the shear to be defined for any two triangles on $X$ joined by a transverse arc of the foliation. 
\end{rem}

\begin{rem}
If desired one could extend the shear by additivity to all pairs of triangles. For example, in Figure \ref{F:Cocycle}, the shear is defined for $T_1$ and $T_2$, and also for $T_2$ and $T_3$, and we can define the shear between $T_1$ and $T_3$ to the sum of the shears from $T_1$ to $T_2$ and from $T_2$ to $T_3$. This additivity makes it appropriate to refer to the shearing as a cocycle.  
\end{rem}

\section{The Fundamental Lemma on Earthquakes}

We've discussed the shear between two triangles joined by an arc $A$: one follows the singular leaf from one triangle, and looks at where it lands on another triangle, and take  the transverse measure, or equivalently hyperbolic length, of the arc of the boundary geodesic from that landing point to the center point. The fundamental engine of Mirzakhani's isomorphism is how this shear changes when you earthquake in $\lambda$. It is implicit that $\lambda$ is maximal. 

\begin{lem}\label{L:fund}
Denote by $\Shear_X(T_1, T_2)$ the shear for two triangles joined by an arc $A$ of the horocyclic foliation on the hyperbolic surface $X$. Then 
$$\Shear_{E_{t\lambda}(X)}(T_1, T_2) = \Shear_X(T_1, T_2) + t \lambda(A),$$
where $\lambda(A)$ denotes the transverse measure of $A$ and $t$ is sufficiently small. 
\end{lem}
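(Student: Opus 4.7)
My plan is to reduce the lemma to a single-leaf computation using the discrete approximation of earthquakes together with the cocycle property of the shear.

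\emph{Step 1 (discretization).} By definition, $E_{t\lambda}(X)$ is the limit of $E_{t\lambda_n}(X)$ for a sequence $\lambda_n$ of finitely supported approximations to $\lambda$. I would choose the $\lambda_n$ so that each $\lambda_n$ is atomic and supported on actual leaves of the maximal lamination $\lambda$, and so that $\lambda_n(A)\to \lambda(A)$ (this is possible because $\lambda_n\to \lambda$ weakly and the endpoints of $A$ lie in the interiors of the triangles $T_1,T_2$). Because $\Shear_Y(T_1,T_2)$ is a signed hyperbolic length in $Y$ and depends continuously on $Y\in \cT_g$, it is enough to prove the formula for each discrete $\lambda_n$ and then pass to the limit.

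\emph{Step 2 (telescoping).} Using the cocycle property mentioned in the remark following Theorem \ref{T:shear}, I would pick a finite chain $T_1 = T_0', T_1', \ldots, T_N' = T_2$ of triangles joined by successive sub-arcs of the horocyclic foliation, chosen so that every pair of adjacent triangles sharing an edge that is an atom of $\lambda_n$ crossed by $A$ appears in the chain. Then $\Shear_Y(T_1,T_2) = \sum_k \Shear_Y(T_k',T_{k+1}')$ for any hyperbolic metric $Y$.

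\emph{Step 3 (single-leaf analysis).} Because $\lambda_n$ is atomic on finitely many leaves, the map $E_{t\lambda_n}$ is piecewise isometric: it is an isometry on each component of the complement of the supporting leaves, and the gluing across a leaf of mass $m$ is twisted by hyperbolic distance $tm$ along that leaf. Let $e_k$ be the shared edge of $T_k'$ and $T_{k+1}'$. If $e_k$ is not an atom of $\lambda_n$, the identification along $e_k$ is unchanged, so the perpendicular feet on $e_k$ of the orthogeodesics from the opposite cusps of $T_k'$ and $T_{k+1}'$ do not move and $\Shear_{E_{t\lambda_n}(X)}(T_k',T_{k+1}') = \Shear_X(T_k',T_{k+1}')$. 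If $e_k$ has mass $m_k$, then $T_{k+1}'$ is subject to a rigid translation along $e_k$ by $tm_k$; since $e_k$ itself is the axis of this translation, the perpendicular foot from the opposite cusp of $T_{k+1}'$ slides by exactly $tm_k$ along $e_k$, while the foot coming from $T_k'$ is unchanged, giving $\Shear_{E_{t\lambda_n}(X)}(T_k',T_{k+1}') = \Shear_X(T_k',T_{k+1}') + tm_k$. The sign is fixed to be $+$ by the right-earthquake convention.

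\emph{Step 4 (sum and limit).} Summing the contributions telescopically gives $\Shear_{E_{t\lambda_n}(X)}(T_1,T_2) = \Shear_X(T_1,T_2) + t\sum_k m_k = \Shear_X(T_1,T_2) + t\lambda_n(A)$, and letting $n\to\infty$ yields the lemma. The main obstacle is the bookkeeping in Step~3, in particular the verification of the correct sign and the check that for $t$ sufficiently small there is still an arc of the horocyclic foliation of $E_{t\lambda}(X)$ joining $T_1$ and $T_2$ that naturally corresponds to $A$; the latter is precisely why the hypothesis ``$t$ sufficiently small'' is required, and it amounts to ensuring that the horocyclic foliation deforms continuously without forcing a new Whitehead move during the earthquake interval $[0,t]$.
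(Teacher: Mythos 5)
Your overall strategy---approximate $\lambda$ between $T_1$ and $T_2$ by a finite collection of its leaves, compute the effect of a single-leaf earthquake, sum, and pass to the limit---is exactly the strategy of the paper's proof. However, Steps 2--3 as written contain a genuine flaw in how you localize the single-leaf contribution. You telescope over a chain of triangles and treat each atom of $\lambda_n$ as a \emph{shared edge} of two adjacent triangles $T_k'$, $T_{k+1}'$. For a generic maximal lamination this configuration does not exist: a leaf of $\lambda$ that bounds a complementary triangle has that triangle on one side, but on the other side it is typically approximated by a Cantor set of other leaves, not by another triangle. So the edge-sharing pairs your chain is built from are generically absent, the ``mass $m_k$'' case of Step 3 is never triggered, and the telescoping sum would wrongly report zero change in shear.

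The repair is to drop the telescoping and apply your single-leaf computation directly to $\Shear(T_1,T_2)$, which is what the paper does: the time-$tm$ earthquake in a single leaf $\gamma$ lying between $T_1$ and $T_2$ applies the hyperbolic translation along $\gamma$ to the entire half-plane $\bH_\gamma$ on the $T_2$ side, hence translates $\tilde\lambda\cap\bH_\gamma$ and the horocyclic foliation there rigidly; every arc of the horocyclic foliation in $\bH_\gamma$ with an endpoint on $\gamma$ has that endpoint slide by $tm$ along $\gamma$, so the landing point defining $\Shear(T_1,T_2)$ moves by exactly $tm$ regardless of whether $\gamma$ is an edge of $T_1$, of $T_2$, or of neither. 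Your remaining steps (finitely many leaves give $t\sum_i m_i$, then $\lambda_n(A)\to\lambda(A)$ because the endpoints of $A$ lie in the interiors of the complementary triangles) then go through as you wrote them, and your closing remarks about the sign convention and the role of ``$t$ sufficiently small'' are consistent with the paper.
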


In other words, ``the change in shear is equal to the transverse measure." Mirzakhani cites \cite{B} for this fact, but it can be seen quite easily as follows. 

The restriction that $t$ be small is absolutely not required, but it is sufficient for our purposes, and allows us to avoid thinking about, for example, the situation where $T_2$ and $T_1$ are not joined by an arc of the horocyclic foliation on $E_{t\lambda}(X)$. 

Before reading the proof, the reader may first want to do the following warm up exercise. 

\begin{exercise}
Let $T_1$ and $T_2$ be triangles in $\bH$ that share an edge $\gamma$. How does the shear change after moving one of the triangles by a hyperbolic isometry with axis $\gamma$ and translation distance $t$? 
\end{exercise}

\begin{proof}[Proof of Lemma \ref{L:fund}]
Without loss of generality take $t=1$. 

$T_1$ and $T_2$ are separated by infinitely many leaves of $\tilde\lambda$. As discussed in the definition of earthquakes, we can understand how $T_2$ is moved by the earthquake $E_\lambda$ (assuming $T_1$ is fixed, i.e. relative to $T_1$) by approximating the measured lamination between $T_1$ and $T_2$ by a discrete one. So we do this, picking a discrete lamination consisting of a finite subset of the leaves of $\lambda$ that bound triangles. It doesn't matter to us if this is done in a group equivariant way, since we are just approximating the earthquake in $\bH$. (Indeed the experts may note that it can't be done in a group equivariant way. The quotient would be a discrete lamination, and hence must consist of closed leaves, but $\lambda$ has no closed leaves.) 

If we earthquake along a leaf $\gamma$ of $\tilde\lambda$ between $T_1$ and $T_2$ by an amount $t$, this changes the shear between $T_1$ and $T_2$ by exactly $t$, basically by definition. Indeed, the earthquake applies the hyperbolic isometry that translates along $\gamma$ to the half plane $\bH_\gamma$ on the $T_2$ side of $\gamma$. This moves $\tilde\lambda$ by this isometry on $\bH_\gamma$, and hence it translates the transverse horocyclic foliation on $\bH_\gamma$. Hence, each arc of the transverse horocyclic foliation in $\bH_\gamma$  with an endpoint on $\gamma$ is translated so that the new endpoint is $t$ farther along on $\gamma$. 

Similarly if we earthquake along finitely many leaves of $\lambda$ with measures $t_i$, the shear changes by precisely $\sum t_i$. So, taking a limit, we see that the shear between $T_1$ and $T_2$ changes by an amount equal to the transverse $\lambda$ measure of a transverse arc starting in $T_1$ and ending in $T_2$. 
\end{proof}

\begin{rem}
It may seem strange that the ``Fundamental Lemma", as we have named it, does not apply to arbitrary earthquakes, but only to earthquakes in maximal laminations. However any lamination can be extended, in many ways, to a maximal lamination, and the Fundamental Lemma applies to a measured maximal lamination even if the measure doesn't have full support. Recall that the horocyclic foliation, which we use to define shear, doesn't even depend on or require a measure on $\lambda$. 
\end{rem}


\section{Mirzakhani's isomorphism}

We now turn to the proof of Theorem \ref{T:main}. We begin by specifying full measure sets on which we will build the desired conjugacy $F$.  

Let $QD_0$ denote the locus of quadratic differentials over Teichm\"uller space that don't have any horizontal saddle connections and that have only simple zeros. Recall that a quadratic differential with simple zeros has $4g-4$ zeros. 

\begin{rem}This condition is equivalent to the horizontal lamination being maximal. This can be checked in more than one way. For example, you can note that each simple zero without a horizontal saddle connection gives a complementary triangle, and $4g-4$ times the area of the triangle is the area of the surface, so there is no room for any other complementary regions. 
\end{rem}

Let $\ML_{0}$  denote the locus of measured foliations that are maximal. We will build a mapping class group equivariant measurable isomorphism $F$ from $\ML_0\times \cT_g$ to $QD_0$ that conjugates earthquake flow to unipotent flow. The map sends $(\lambda,X)$ to the quadratic differential with foliations $(\lambda, F_\lambda(X))$, 
$$F(\lambda,X)=q(\lambda, F_\lambda(X)).$$ 
Here $F_\lambda(X)$ continues to denote the horocyclic foliation of $X$. This map is is only measurable, but its restriction to each slice $\{\lambda\}\times \cT_g$ is a homeomorphism onto the set of quadratic differentials with horizontal lamination $\lambda$ by Theorem \ref{T:shear}. It follows that $F$ is a bijection from $\ML_0\times \cT_g$ to $QD_0$. 

It remains only to show that the image of the earthquake flow path $(\lambda, E_{t\lambda}(X))$ is a unipotent flow path.  We begin by discussing Teichm\"uller unipotent flow, which is of course characterized by how it changes period coordinates. But first we present a lemma that will allow us to restrict from arbitrary periods to special saddle connections. 

\begin{lem}\label{L:rectangles}
Every isotopy  class of path joining singularities of a  quadratic differential can be realized by a sequence of paths that start at one singularity, travel in the horizontal direction, then travel in the vertical direction and end at a singularity.
\end{lem}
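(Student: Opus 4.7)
My plan is to reduce to a single saddle connection and then realize it as a sequence of L-paths using the developing map of the flat structure.

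First, I would replace the given isotopy class by its flat geodesic representative, which is a concatenation of saddle connections $\sigma_1, \ldots, \sigma_n$ between consecutive intermediate singularities (this broken geodesic representative exists by standard energy-minimization arguments). Since each $\sigma_i$ already joins two singularities, it suffices to show that a single saddle connection can be realized, up to isotopy rel endpoints, as a concatenation of L-paths (horizontal-then-vertical segments each going from singularity to singularity).

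For a single saddle connection $\sigma$ from $p$ to $q$ with period $(x, y)$, I would pass to the orientation double cover if the differential is not already abelian, lift $\sigma$ to the universal cover $\tilde X$, and apply the developing map $\mathrm{dev}(\tilde z) = \int_{\tilde p}^{\tilde z} \omega$, so that $\mathrm{dev}(\tilde p) = 0$ and $\mathrm{dev}(\tilde q) = (x, y)$. Consider the L-path in $\bR^2$ going from $0$ horizontally to $(x, 0)$ and then vertically to $(x, y)$. If the triangle bounded by $\mathrm{dev}(\tilde\sigma)$ and this L contains no developed lifts of singularities in its interior, the L lifts uniquely to $\tilde X$ and is homotopic to $\tilde\sigma$ rel endpoints (since $\tilde X$ is simply connected), and then projects to a single L-path isotopic to $\sigma$ on $X$.

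If there are singularities inside the triangle, I would build a finer L-staircase: starting from $\tilde p$, travel horizontally toward $\tilde q$ until hitting either the corner or the first lift of a singularity $\tilde p_1$; turn vertically and continue; repeat. At each stage the directions are chosen so that the $x$- and $y$-coordinates of the developed position make monotonic progress toward $\mathrm{dev}(\tilde q)$. Since only finitely many singularity lifts fit in the bounded region traversed, the construction should terminate at $\tilde q$ in finitely many L-paths, yielding the desired realization after projecting to $X$. The hardest step will be verifying that this staircase actually terminates at the correct lift $\tilde q$, rather than at a different lift of $q$ or running indefinitely along a separatrix that misses the target $x$- or $y$-coordinate; resolving this obstacle requires a careful case analysis of the possible separatrix configurations and a quantitative use of the fact that the bounded region between $\tilde\sigma$ and the staircase contains only finitely many singular points.
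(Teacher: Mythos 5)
Your reduction to a single saddle connection matches the paper's first step, and the target (a staircase of singularity-to-singularity L-paths homotopic to $\sigma$) is the right one, but the mechanism you propose for locating the intermediate singularities does not work. In your inductive step you ``travel horizontally toward $\tilde q$ until hitting either the corner or the first lift of a singularity.'' A horizontal separatrix leaving a singularity will in general never pass through another singularity (for a typical differential the horizontal foliation is minimal and the separatrices are dense), so there is no ``first lift of a singularity'' on that trajectory to turn at; and if you instead turn at the corner, the corner is not a singularity, so the resulting path does not decompose into L-paths whose endpoints are singularities, and the subsequent vertical segment has no reason to terminate at one either. The termination and correctness worry you flag at the end is therefore not a technical loose end to be handled by a case analysis of separatrix configurations; as stated, the staircase simply has no valid turning points to begin with.

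The device that fixes this --- and is essentially the whole of the paper's proof --- is to grow rectangles subordinate to the saddle connection itself rather than to flow along leaves. For a saddle connection $\sigma$ from $p$ to $q$ with period $(x,y)$, $x,y\neq 0$, consider for each $t$ the immersed axis-parallel rectangle $R_t$ having the initial segment $\sigma|_{[0,t]}$ as its diagonal. For small $t$ it contains no singularities; let $t_*$ be the first time its closure does. If no such $t_*$ occurs before the full rectangle is reached, $\sigma$ is a diagonal of a singularity-free immersed rectangle and is isotopic rel endpoints to a single L-path along two of its sides. Otherwise, since $\bigcup_{t<t_*}R_t$ is singularity-free, the first singularity $p_1$ to appear lies on the two ``far'' sides of $R_{t_*}$, hence is joined to $p$ by an L-path inside $R_{t_*}$, and $\sigma$ is isotopic to that L-path followed by a path from $p_1$ to $q$ on which one recurses. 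Finiteness comes from the fact that the region swept out is an immersed disk of bounded area meeting only finitely many preimages of singularities. (The degenerate cases where $\sigma$ is horizontal or vertical are immediate, since such a $\sigma$ is already an L-path with one leg of length zero.) Your first case (empty triangle) coincides with the first alternative here; what is missing from your write-up is this rectangle-growing step, which is precisely what produces the intermediate singularities.
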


\begin{proof}
It suffices to prove this for saddle connections. This can be done by growing rectangles: Look at the rectangle from one endpoint to a point on the saddle connection nearby, and grow this rectangle until it hits a singularity. Continue in this way as in Figure \ref{F:Rectangles}.
\begin{figure}[h!]
\includegraphics[width=.5\linewidth]{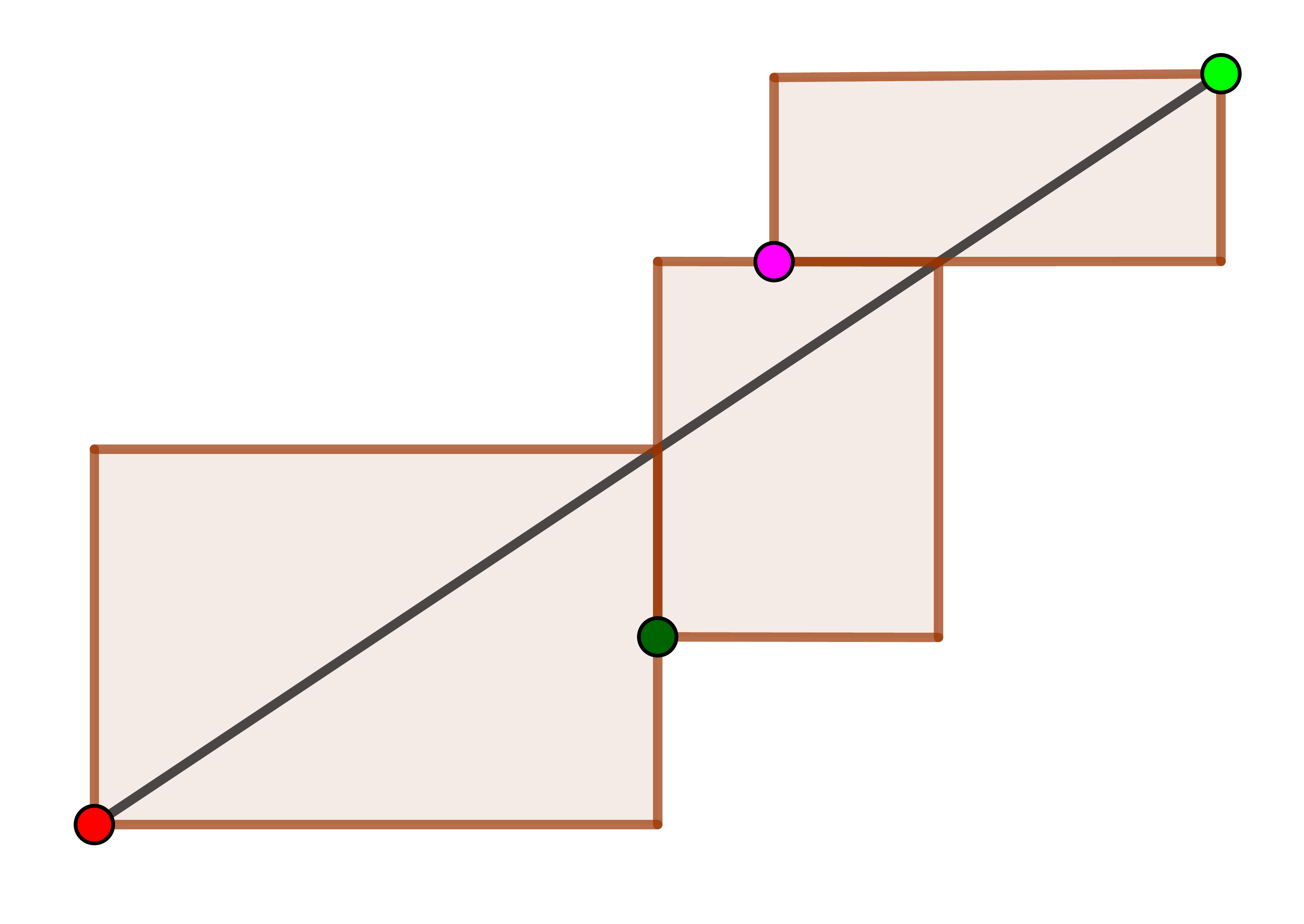}
\caption{The proof of Lemma \ref{L:rectangles}.}
\label{F:Rectangles}
\end{figure}
\end{proof}

\begin{cor}\label{C:unipotent}
Suppose $q_t$ is a path of quadratic differentials. Suppose that for every $t_0$ and every path $\gamma$ on $q_{t_0}$ as in the lemma, the period $x_t+iy_t$ of $\gamma$ satisfies 
$$\left. \frac{d}{dt}\right|_{t=t_0}x_t=y_{t_0}, \quad\text{\and}\quad \left. \frac{d}{dt}\right|_{t=t_0}y_t = 0.$$
Then $q_t$ is an orbit of Teichm\"uller unipotent flow. 
\end{cor}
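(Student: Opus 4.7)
The plan is to exploit Lemma \ref{L:rectangles} and the additivity of periods on concatenations of paths to reduce the condition from arbitrary relative cycles to horizontal-then-vertical paths, at which point the hypothesis applies directly.

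First I would recall that Teichm\"uller unipotent flow $u_t$ acts in local period coordinates by $z = x+iy \mapsto (x+ty)+iy$. Hence a $C^1$ path $q_t$ in $QD$ is an orbit of $u_t$ if and only if, for every isotopy class $\delta$ of path joining singularities, the period $x_t(\delta)+iy_t(\delta)$ satisfies
$$\frac{d}{dt} x_t(\delta) = y_t(\delta), \qquad \frac{d}{dt} y_t(\delta) = 0$$
at every $t$. This uses the fact that period maps are local diffeomorphisms on $QD$, so equality of tangent vectors is equivalent to equality of derivatives of every period coordinate.

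Next I would fix $t_0$ and an arbitrary isotopy class $\delta$ of path joining singularities on $q_{t_0}$. By Lemma \ref{L:rectangles} applied to $q_{t_0}$, the class $\delta$ is isotopic rel singularities to a concatenation $\gamma_1 \cdots \gamma_n$ in which each $\gamma_i$ is horizontal-then-vertical on $q_{t_0}$. Transporting the isotopy class to nearby $q_t$ via the marking, periods depend smoothly on $t$, and additivity of periods under concatenation gives
$$x_t(\delta) + i y_t(\delta) = \sum_{i=1}^{n} \bigl( x_t(\gamma_i) + i y_t(\gamma_i) \bigr)$$
for $t$ near $t_0$. Differentiating at $t=t_0$ and applying the hypothesis to each $\gamma_i$ yields
$$\left.\frac{d}{dt}\right|_{t=t_0} x_t(\delta) = \sum_i y_{t_0}(\gamma_i) = y_{t_0}(\delta), \qquad \left.\frac{d}{dt}\right|_{t=t_0} y_t(\delta) = 0,$$
where in the first sum we used that $y_{t_0}(\cdot)$ is additive on the concatenation as well. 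Since $t_0$ and $\delta$ were arbitrary, the ODE characterizing the generator of unipotent flow is satisfied at every point of the curve, so $q_t$ is an orbit of $u_t$.

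The only point requiring care is that the decomposition produced by Lemma \ref{L:rectangles} represents $\delta$ in relative homology so that periods add; this is immediate because the concatenation is isotopic to $\delta$ rel singularities and periods are cohomological. I do not expect a real obstacle: the conceptual content is already contained in Lemma \ref{L:rectangles}, and the remainder is linearity of the derivative combined with the defining property of unipotent flow in period coordinates.
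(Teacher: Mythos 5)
Your proposal is correct and follows the same route the paper intends: Lemma \ref{L:rectangles} plus additivity of periods reduces the check to the horizontal-then-vertical paths, and the pointwise derivative condition integrates to the unipotent formula $(x,y)\mapsto(x+ty,y)$ since period coordinates are local coordinates. The paper compresses all of this into the one-line observation ``recall the matrix identity and that a function with constant derivative is linear''; you have simply made explicit the decomposition and integration steps it leaves implicit.
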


\begin{proof}
It suffices to recall that 
\begin{eqnarray}\label{E:fund}
\left(\begin{array}{cc} 1& t\\ 0 & 1\end{array}\right) \left(\begin{array}{c} x\\ y \end{array}\right) = \left(\begin{array}{c} x+ty\\ y \end{array}\right)
\end{eqnarray} 
and that a function with constant derivative is linear.
\end{proof}

Observe that  the $y$ component of the period of $\gamma$ is given by the transverse measure of $\gamma$ for the horizontal measured foliation, see Figure \ref{F:y}.
\begin{figure}[h!]
\includegraphics[width=.5\linewidth]{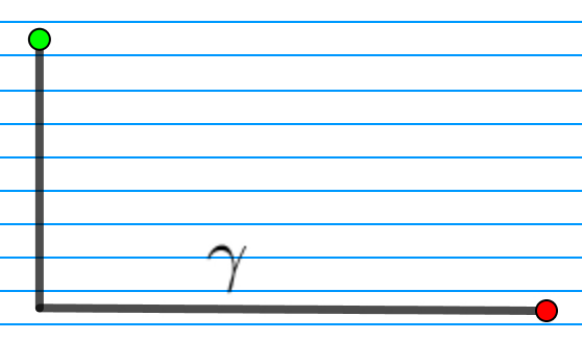}
\caption{The $y$ component is given by the horizontal foliation.}
\label{F:y}
\end{figure}
The intuition of the proof of Theorem \ref{T:main} is to think of each singularity of $q$ as corresponding to a complementary triangle for a lamination, and to think of the $x$ component of a period of such a $\gamma$ as the shear between the two corresponding triangles. We offer the following chart to summarize this intuition, before beginning the formal proof. 

\vspace{.5cm}
\begin{center}
\begin{tabular}{ c c c }\vspace{.1cm}
  earthquake flow & $\longleftrightarrow$ & horocycle flow \\ \vspace{.1cm}
  $\lambda$ & $\longleftrightarrow$ & horizontal foliation \\ \vspace{.1cm}
  $F_\lambda(X)$ & $\longleftrightarrow$ & vertical foliation \\ \vspace{.1cm}
 $(\lambda, F_\lambda(X))$ & $\longleftrightarrow$ & quadratic differential \\ \vspace{.1cm}
  triangle  & $\longleftrightarrow$ & singularity \\ \vspace{.1cm}
  shear  & $\longleftrightarrow$ & $x$-component of a period \\ \vspace{.1cm}
  Fundamental Lemma  &$\longleftrightarrow$& equation (\ref{E:fund})\\
\end{tabular}
\end{center}
\vspace{.5cm}

\begin{proof}[Proof of Theorem \ref{T:main}]
We wish to show that $$q(\lambda, F_\lambda(E_{t\lambda}(X)))$$ is a horocycle flow path using Corollary \ref{C:unipotent}. We will consider a moment in time $t_0$, which without loss of generality  is $t_0=0$, and show that for each path $\gamma$ as above, the derivative of the period of $\gamma$ satisfies Corollary \ref{C:unipotent}. 
\begin{figure}[h!]
\includegraphics[width=.7\linewidth]{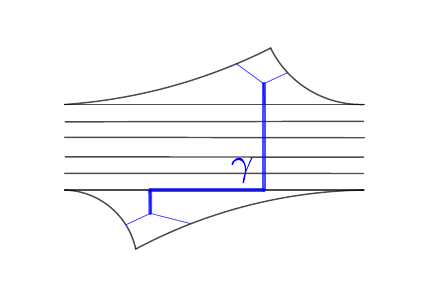}
\caption{This picture isn't geometrically accurate, but it gives an idea of how to think of $\gamma$ as it lies on $\bH=\tilde{X}$. The  horizontal lines  are leaves of $\tilde\lambda$, and the vertical lines are leaves of the horocyclic foliation.}
\label{F:gamma2}
\end{figure}

We've already done most of the work to see this. Indeed, the path $\gamma$ corresponds to a path in $X$ or $\tilde{X}=\bH$ joining two triangles. If the period of $\gamma$ is $(x_t, y_t)$, then we see that $x_t$ is the transverse measure of $\gamma$ given by the vertical foliation, and similarly for $y_t$. So $y_t=\lambda(\gamma)$ is constant. And the derivative of $x_t$ is equal to  $y_t=\lambda(\gamma)$ by the Fundamental Lemma. 

Hence Corollary \ref{C:unipotent} gives that  $$q(\lambda, F_\lambda(E_{t\lambda}(X)))= F(\lambda, E_{t\lambda}(X))$$ is an earthquake path as desired. We already known that $F$ is a homeomorphism from $\ML_0\times \cT_g$ to $QD_0$, so this concludes the proof. 
\end{proof}

\begin{rem}\label{R:differences}
In \cite{M}, Mirzakhani factors the map $F_\lambda$ through a subset $H_+(\lambda)$ of the space $H(\lambda,\bR)$ of transverse cocycles on $\lambda$. There is a map 
$$I_\lambda:\ML(\lambda)\to H_+(\lambda)$$
 that records the shearing data as a transverse cocycle. Mirzakhani considers a map 
$$G_\lambda:\cT_g\to H_+(\lambda)$$
satisfying $F_\lambda= I_\lambda^{-1} \circ G_\lambda$. The map $G_\lambda$ is a symplectomorphism by \cite{BS}, and using the fact the earthquake flow is Hamiltonian  Mirzakhani concludes that $G_\lambda$ conjugates $E_{t\lambda}$ with a natural linear flow on $H_+(\lambda)$.  Mirzakhani also notes that if $q(\lambda, \mu_t)$ is a horocycle flow path, then $I_\lambda(\mu_t)$ is an orbit of the same linear flow on $H_+(\lambda)$. 
From this she concludes that $F$ conjugates earthquake flow to horocycle flow. The proof we have presented thus differs from \cite{M} in that we do not factor through the intermediary $H_+(\lambda)$ and we use the Fundamental Lemma, rather than the Hamiltonian nature of the flows and \cite{BS}, to establish the conjugacy. 
\end{rem}

\begin{rem}
I conjecture that the semi-conjugacy is continuous on $\ML_0\times \cT_g$. For example, if you take a sequence of maximal laminations $\lambda_n$ that converge to some $\lambda$ that is also maximal, then for each fixed $X$ the horocyclic foliation on $X$ for $\lambda_n$ should converge to that of $\lambda$. 

But the semi-conjugacy  cannot be extended continuously to even to the locus where $\lambda$ has a single quadrilateral and the rest of the complementary regions are triangles. The reason is that such $\lambda$ are limits of maximal laminations in two different ways, essentially corresponding to the two different ways to turn the quadrilateral into two triangles. These two choices give two different horocyclic foliations. 
\begin{figure}[h!]
\includegraphics[width=.5\linewidth]{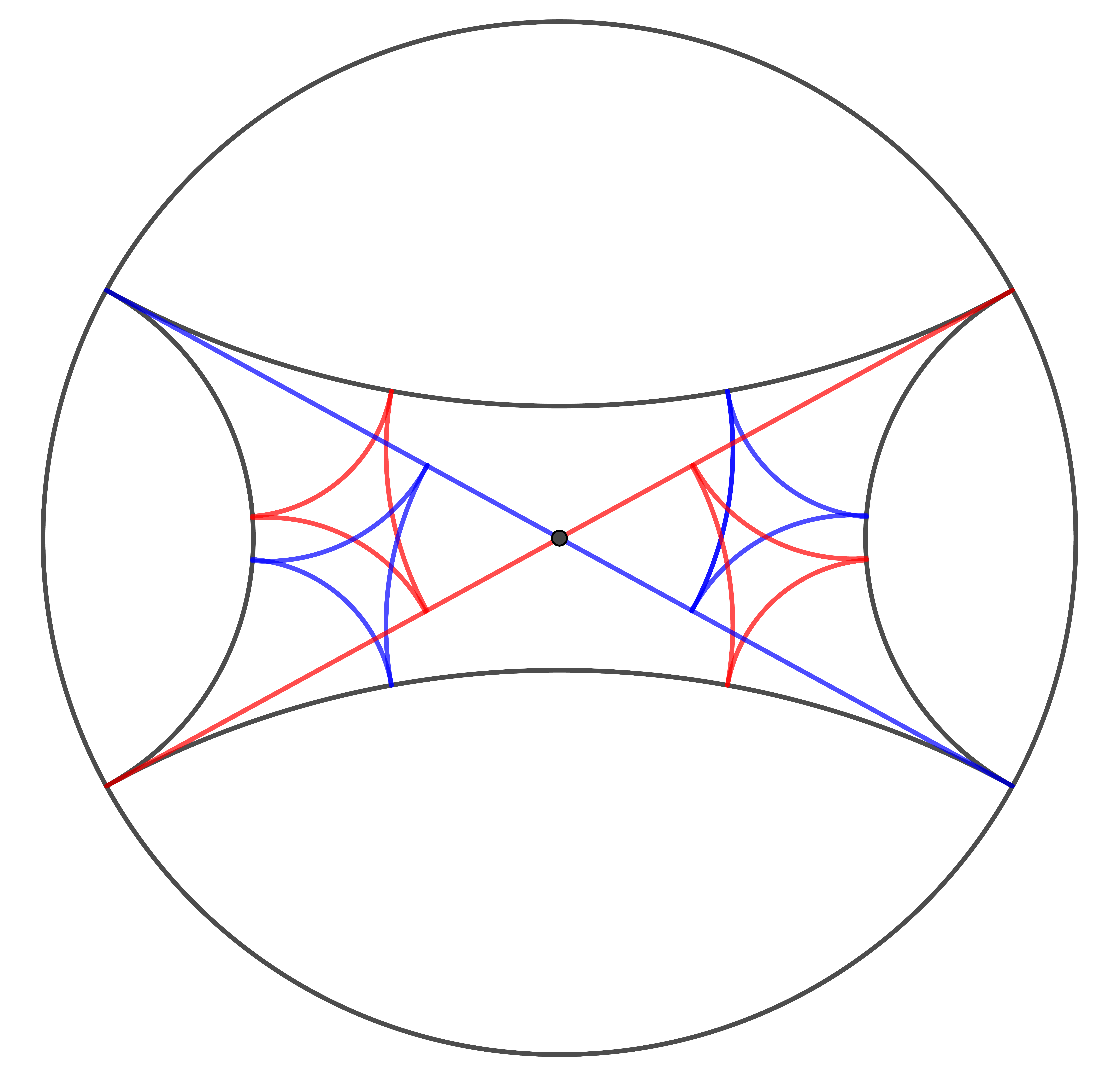}
\caption{The proof that $F$ cannot be extended to a continuous map.}
\label{F:NotContinuous}
\end{figure}
Note that this situation arises if you approximate a quadratic differential with a saddle with nearby quadratic differentials where that saddle can either slope slightly up or slightly down. (This comment is necessary because you can't just add one geodesic to the quadrilateral and get a lamination with a transverse measure of full support.) 

In general, for each $\lambda$, there is possibly a finite or infinite number of ways to fill in $\lambda$ to a maximal lamination (without a measure of full support), and each of these different maximal extensions gives a different horocyclic foliation that will serve as the vertical measured foliation for a quadratic differential. Perhaps one can think that Mirzakhani's map as being multivalued off of $\ML_0$ and the multiple values correspond to all these choices of maximal extension. Similarly if one wanted to compute $F_\lambda$ when $\lambda$ isn't maximal, one could do it by extending $\lambda$ to be maximal in a number of ways, so one can also think of the inverse of $F$ as being multi-valued. 

Alternatively, one could consider earthquake flow on $\ML_{ext}\times \cT_g$, where $\ML_{ext}$ consists of all pairs of a measured lamination plus an extension of its support to a maximal lamination, and we use the topology that requires convergence of both the measured lamination and the maximal unmeasured extension. This flow should map continuously onto both earthquake flow and Teichm\"uller unipotent flow.  
\end{rem}

\begin{rem}\label{R:continuous}
I don't know how to show that there couldn't be some (totally different) continuous conjugacy between earthquake flow and Teichm\"uller unipotent flow. This seems like an interesting open problem. 
\end{rem}

\begin{rem}\label{R:strat}
Consider a partition $\kappa$ of $4g-4$.
Consider the subset $\cI(\kappa)\subset \ML\times \cT_g$ where the complementary regions of the maximal measured lamination are all symmetric ideal hyperbolic polygons, with the number of polygons with a given number of edges given by $\kappa$. (You view $\kappa$ as a partition of the area divided by $\pi$.) Minus the symmetry assumption, this would be just a condition on the measured lamination, and not the point of Teichm\"uller space. The symmetry condition says that there is a hyperbolic isometry that cyclically permutes the ideal vertices. 

One can't even extend $F$ to this locus. However, I conjecture that there is a different $F$ that is a conjugacy from $\cI(\kappa)$ to the stratum $\cQ(\kappa)$ of quadratic differentials. To be more precise, the image of $F$ would be the locus with no horizontal saddle connections in that stratum. This map would use the horocyclic foliation that is defined for symmetric ideal polygons. 
\end{rem}

%
\begin{rem}\label{R:length}
There is a notion of hyperbolic length of geodesic laminations. The hyperbolic length of $\lambda$ on $X\in \cT_g$, is $i(\lambda, F_\lambda(X)))$. If you'd like you can take this as a definition. It makes sense because the transverse measure for $F_\lambda(X)$ corresponds to arc-length along $\lambda$.
 
It follows that the semi-conjugacy is such that if $\lambda$ has hyperbolic length $\ell$ on $X$, then the resulting quadratic differential has area $\ell$. A fancy way of putting this is to say $\lambda$ has extremal length $\ell$ on the image Riemann surface. (Don't worry if you don't know what that means.)
\end{rem}

\begin{rem}
Mirzakhani's map $F$ simultaneously conjugates Thurston's stretch map flow \cite{T} to the action of 
$$\left(\begin{array}{cc} 1& 0\\ 0 &e^s\end{array}\right)$$
on $QD$. This is because Thurston's stretch map flow is simply scalar multiplication in shear coordinates. 
\end{rem}

\begin{rem}
I conjecture that $F$ maps co-bounded sets to co-bounded sets. Co-bounded means contained in a compact set after you quotient by the action of the mapping class group. This is related to the fact that the set of maximal unmeasured laminations on a given (unmarked) hyperbolic surface should be compact. So given any non-maximal lamination, there should be a compact set of ways to extend it to a maximal lamination, and these should correspond to the possible limiting values of $F$. Similarly for $F^{-1}$. 
\end{rem}

\section{Invariant measures}

There is a natural measure called Thurston measure $\mu_{Th}$ on $\ML$. It is basically the same thing as Masur-Veech measure. Most laminations are not orientable, but  can be made so by passing to a double cover, after which they give a cohomology class. For nearby laminations, you can pass to a common (branched) double cover, so they give cohomology classes in the same vector space. Thurston measure is Lebesgue measure in this vector space. (Actually the vector space is the $-1$ eigenspace of the double cover.) 

As discussed, $QD$ is equal to $\ML\times \ML\setminus \Delta$. It isn't hard to show that the Masur-Veech measure (not just on the unit area locus) is equal to the restriction of $\mu_{Th}\times \mu_{Th}$ to the complement of $\Delta$. Indeed, Masur-Veech measure also arises from taking cohomology classes on the double cover where the foliation becomes orientable. 

A basic fact that we will discuss in the next section is that earthquakes are Hamiltonian flows. A corollary is the following. 

\begin{thm}
The action of $E_{t\lambda}$ on $\cT_g$ leaves invariant the Weil-Petersson measure $\mu_{WP}$
\end{thm}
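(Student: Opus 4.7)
The plan is to deduce the invariance of $\mu_{WP}$ from the stronger statement that each earthquake flow $E_{t\lambda}$ is the Hamiltonian flow of the hyperbolic length function $\ell_\lambda : \cT_g \to \bR$ (see Remark \ref{R:length}) with respect to the Weil--Petersson symplectic form $\omega_{WP}$. Once this is known, the conclusion is immediate: $\mu_{WP}$ is (up to normalization) the Liouville measure $\omega_{WP}^{3g-3}/(3g-3)!$, and any Hamiltonian flow preserves its defining symplectic form, hence all of its exterior powers, hence $\mu_{WP}$.

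First I would handle the case of a simple closed curve $\alpha$. In Fenchel--Nielsen coordinates $(\ell_i,\tau_i)_{i=1}^{3g-3}$ in which the first pair corresponds to $\alpha$, the earthquake $E_{t\alpha}$ is, as noted earlier in the excerpt, just the translation $\tau_1 \mapsto \tau_1+t$. Wolpert's formula $\omega_{WP}=\sum_{i=1}^{3g-3} d\ell_i\wedge d\tau_i$ then gives $\iota_{\partial/\partial \tau_1}\omega_{WP}=d\ell_\alpha$, which is precisely the Hamiltonian equation for $\ell_\alpha$. By additivity this extends to any weighted simple multi-curve $\alpha=\sum c_j\alpha_j$ with $\ell_\alpha=\sum c_j\ell_{\alpha_j}$.

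Next I would pass from weighted simple multi-curves to an arbitrary $\lambda\in\ML$ by density. By the discussion of earthquakes in Section 2, if $\alpha_n\to\lambda$ in $\ML$ then $E_{t\alpha_n}\to E_{t\lambda}$ on $\cT_g$; and by Kerckhoff--Wolpert the length function $\ell_\lambda(X)$ is continuous in $\lambda$ and $C^1$ in $X$, with $d_X\ell_\lambda$ depending continuously on $\lambda$. Writing $V_\mu$ for the generator of $E_{t\mu}$, one has $\iota_{V_{\alpha_n}}\omega_{WP}=d\ell_{\alpha_n}$, and passing to the limit yields $\iota_{V_\lambda}\omega_{WP}=d\ell_\lambda$, so $E_{t\lambda}$ is Hamiltonian for $\ell_\lambda$ and therefore preserves $\omega_{WP}$ and $\mu_{WP}$.

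The main obstacle is this last limit argument: to conclude $\iota_{V_{\alpha_n}}\omega_{WP}\to \iota_{V_\lambda}\omega_{WP}$ one needs more than pointwise convergence of flows, namely that the vector fields $V_{\alpha_n}$ converge to $V_\lambda$ in a suitable ($C^0$ locally uniform) sense on $\cT_g$. This is exactly the content of Kerckhoff's continuity/differentiability theorems for the earthquake cocycle and length functions; modulo citing these, the invariance of $\mu_{WP}$ follows. Everything else in the argument is formal symplectic geometry.
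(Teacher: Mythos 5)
Your proposal is correct and follows the same route as the paper, which states this theorem as a corollary of the fact (elaborated in its later section on Hamiltonian flows) that $E_{t\lambda}$ is the Hamiltonian flow of the length function $\ell_\lambda$ with respect to the Weil--Petersson symplectic form. You in fact supply more detail than the paper does --- Wolpert's formula for the simple closed curve case and the limiting argument via Kerckhoff's continuity results are exactly the standard way to substantiate that assertion.
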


Recall that the  Weil-Petersson is nothing other than the standard Lebesgue measure in Fenchel-Nielsen coordinates. 

\begin{cor}
For any measure $\nu$ on $\ML$, the earthquake flow leaves the measure $\nu\times \mu_{WP}$ on $\ML\times \cT_g$ invariant.  In particular, $\mu_{Th}\times \mu_{WP}$ is both invariant under earthquake flow and the action of the mapping class group. 
\end{cor}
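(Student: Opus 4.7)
The plan is to deduce both claims from Fubini's theorem, using the theorem immediately above (which asserts that $E_{t\lambda}$ preserves $\mu_{WP}$ on $\cT_g$ for each fixed $\lambda$) together with the fact that $\mu_{Th}$ and $\mu_{WP}$ are each known to be invariant under the mapping class group on their respective factors.

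First I would prove invariance of $\nu\times \mu_{WP}$ under the earthquake flow. The key observation is that $E_t$ preserves the $\ML$-coordinate: $E_t(\lambda, X) = (\lambda, E_{t\lambda}(X))$. Therefore, for any Borel set $A\subset \ML\times \cT_g$, the slice satisfies
\[
(E_t^{-1}A)_\lambda \;=\; \{X : E_{t\lambda}(X)\in A_\lambda\} \;=\; E_{t\lambda}^{-1}(A_\lambda),
\]
where $A_\lambda = \{X : (\lambda,X)\in A\}$. By the preceding theorem, $\mu_{WP}(E_{t\lambda}^{-1}(A_\lambda)) = \mu_{WP}(A_\lambda)$ for every $\lambda$. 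Applying Fubini,
\[
(\nu\times\mu_{WP})(E_t^{-1}A) \;=\; \int_{\ML}\mu_{WP}(E_{t\lambda}^{-1}A_\lambda)\,d\nu(\lambda)
\;=\; \int_{\ML}\mu_{WP}(A_\lambda)\,d\nu(\lambda)
\;=\; (\nu\times \mu_{WP})(A).
\]

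Specializing to $\nu = \mu_{Th}$ gives earthquake-flow invariance of $\mu_{Th}\times \mu_{WP}$. For the mapping class group invariance of this particular product, I would observe that $\mu_{Th}$ is mapping class group invariant on $\ML$ (this is part of its standard definition/construction as a Lebesgue class measure on the integral cohomology-like lattice) and $\mu_{WP}$ is mapping class group invariant on $\cT_g$ (since it descends to moduli space). The mapping class group acts diagonally on $\ML\times \cT_g$, and for the diagonal action another application of Fubini with two successive changes of variables — one in each factor, using invariance on that factor — yields invariance of the product measure under the diagonal action.

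There is no real obstacle here: the content is entirely in the input theorem about $\mu_{WP}$-invariance of $E_{t\lambda}$ (which in turn rests on the Hamiltonian nature of earthquake flow, foreshadowed for the next section) and in the already-known mapping class group invariances of the two factor measures. The corollary itself is a bookkeeping exercise with Fubini.
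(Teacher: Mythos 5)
Your proof is correct and is exactly the argument the paper leaves implicit: the corollary is stated without proof as an immediate consequence of the preceding theorem, since the earthquake flow is fiberwise over $\ML$ and Fubini transfers the fiberwise $\mu_{WP}$-invariance to the product, with the mapping class group statement following from the separate invariance of each factor measure. No further comment is needed.
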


Recall from Remark \ref{R:length} that one can take the hyperbolic length of a lamination. It isn't hard to show that this length $\ell_X(\lambda)$ is invariant under earthquake flow in $\lambda$.  For example, when you earthquake in a simple closed curve, the hyperbolic length of that curve doesn't change.  So earthquake flow preserves each level set $\ML_\ell$ for the hyperbolic length of $\lambda$. 

If one wishes an invariant measure on the set $\ML_1\times \cT_g$ where the measured lamination has length 1, one does the same thing as for Masur-Veech measure. Namely, over a point $X\in \cT_g$, the measure used on $\ML_1$ gives a subset of $\ML_1$ the Thurston measure of its cone in $\ML$. (The cone on a set consists of anything in the set times any number in $[0,1]$.) This gives a mapping class group and earthquake flow invariant measure on $\ML_1\times \cT_g$. 

Similarly one gets invariant measures on any other level set $\ML_\ell\times \cT_g$ for the length function. The measure of $\ML_\ell\times \cT_g$ is equal to $\ell^{6g-6}$ times the measure of $\ML_1\times \cT_g$ (the measure is finite after quotienting by the mapping class group).

The measure on $\ML_\ell\times \cT_g$ must map to a measure  on $QD_\ell$, the set of area $\ell$ quadratic differentials. The image measure is Lebesgue class  and invariant under horocycle flow, so using ergodicity of horocycle flow it must be a multiple of Masur-Veech. (The first point should be true since both maps $\ML\times \cT_g\to \ML\times \MF$ and $QD\to \ML\times \MF$ are pretty nice and understandable maps. For example, if you change $\lambda$ a bit then $F_\lambda(X)$ only changes a bit. For a formal proof, one likely has to look at Bonahon's papers.) 

The isomorphism is also a conjugacy for rescaling the $\lambda$ and rescaling the horizontal foliation of the quadratic differential. The measure of $QD_\ell$ is also equal to $\ell^{6g-6}$ times the measure of $QD_1$, so we get the multiple is independent of $\ell$. Hence $\mu_{Th}\times \mu_{WP}$ maps to $c\mu_{Th}\times\mu_{Th}$ for some $c>0$, and so $F_\lambda^*(\mu_{Th})=c\mu_{WP}$. In fact,  Bonahon-Sozen gave a more explicit proof of this, before Mirzakhani's isomorphism, that computes that $c=1$ and handles the symplectic forms rather than just their associated volume forms \cite{BS}.

\begin{thm}
$F_\lambda^*(\mu_{Th})=\mu_{WP}$.
\end{thm}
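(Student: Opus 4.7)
The plan is to leverage the constant-multiple relation already established in the preceding discussion and then pin down the constant. Indeed, the argument sketched immediately before the theorem shows that $F$ transports $\mu_{Th}\times\mu_{WP}$ on $\ML_0\times\cT_g$ to a Lebesgue-class measure on $QD_0$ that is invariant under horocycle flow on each area level $QD_\ell$; by ergodicity of horocycle flow (applied to the mapping-class-group quotient) and the rescaling equivariance of $F$, this image is some constant multiple of Masur--Veech measure, with the constant $c>0$ independent of both $\ell$ and $\lambda$. Disintegrating over $\ML_0$ yields $F_\lambda^*(\mu_{Th})=c\mu_{WP}$ for every maximal $\lambda$, with a single universal $c$. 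The entire problem is therefore reduced to showing $c=1$.

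To determine $c$, I would upgrade the equality of measures to an equality of symplectic forms, following Bonahon--Sozen \cite{BS}. Both top-dimensional volume forms arise canonically from symplectic structures on spaces of real dimension $6g-6$: the Weil--Petersson form $\omega_{WP}$ on $\cT_g$ (whose top wedge power produces $\mu_{WP}$ up to the standard $(3g-3)!$ factor), and Thurston's symplectic form $\omega_{Th}$ on $\MF\cong\ML$ (whose top wedge power produces $\mu_{Th}$ with the same normalization). The goal is to establish $F_\lambda^*\omega_{Th}=\omega_{WP}$ as 2-forms; taking top exterior powers then forces $c=1$.

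The strategy for the symplectic identity is to compute both forms in the same coordinates, namely shear coordinates associated to $\lambda$, or equivalently weights on a train track $\tau$ carrying laminations near $\lambda$. On the $\MF$-side, $\omega_{Th}$ has an explicit combinatorial description as an antisymmetric bilinear pairing on the edge weights of $\tau$ determined by the local structure of the switches. On the $\cT_g$-side, Bonahon's computation expresses $\omega_{WP}$ in shear coordinates via derivatives of the hyperbolic length function restricted to $\MF(\lambda)$; after this calculation, the resulting antisymmetric pairing coincides edge-by-edge with Thurston's pairing. The main obstacle is precisely this comparison of bilinear forms, which requires careful control of variations of hyperbolic length under shearing and the correct matching of orientations and signs.

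I expect that comparing the two symplectic forms is the only genuinely hard input. The scalar-multiple relation $F_\lambda^*(\mu_{Th})=c\mu_{WP}$ comes essentially for free from ergodicity and the naturality of $F$ (requiring no detailed geometric calculation), but ergodicity alone cannot distinguish $c=1$ from any other positive constant. The symplectic upgrade is what breaks the ambiguity, and it is the one step where one cannot avoid direct analysis of hyperbolic geometry near $\lambda$; once \cite{BS} is invoked (or re-derived via a train track computation), the theorem follows by passing to volume forms.
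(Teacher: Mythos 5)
Your proposal is correct and follows essentially the same route as the paper: the ergodicity-plus-rescaling argument preceding the theorem yields $F_\lambda^*(\mu_{Th})=c\mu_{WP}$ for a universal constant $c>0$, and the identification $c=1$ is delegated to the symplectic-form comparison of Bonahon--Sozen \cite{BS}, exactly as the paper does. The only cosmetic difference is that you sketch the \cite{BS} computation via train-track coordinates, whereas the paper notes it was discovered through the pants-decomposition case and Wolpert's magic formula; in both accounts that reference is the black box supplying the normalization.
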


 Their proof was discovered using the case when $\lambda$ contains a pair of pants (which doesn't fit into our setting, since such $\lambda$ don't have fully supported transverse measures) and the magic formula of Wolpert. 

\begin{cor}
The Masur-Veech volume of the principal stratum of quadratic differentials is 
$$\int_{X\in \cM_g} \mu_{Th}(B_X(1)) d\mu_{WP},$$
where $B_X(1)$ is the unit ball in $\ML$ of lamination of length at most 1 on $X$. 
\end{cor}

This corollary is \cite[Theorem 1.4]{M}. 

\begin{proof}[Proof sketch]
Using Fubini, the integral is the $\mu_{WP} \times \mu_{Th}$ measure of a fundamental domain for the mapping class group on 
$$\{(X, \lambda) \in \cT_g \times \ML : \ell_{X}(\lambda)\leq 1\}.$$
Using that $F_\lambda^*(\mu_{Th})=\mu_{WP}$, this is equal to the $\mu_{Th}\times \mu_{Th}$ measure of a fundamental domain for the mapping class group on 
$$\{(h,v) \in \ML\times \ML\setminus \Delta : i(h,v)\leq 1\},$$
which is the Masur-Veech volume of the principal stratum.
\end{proof}

\section{Laminations containing a pants decomposition}

We now consider the case of maximal lamination $\lambda$ that contains a pants decomposition $\cP$, i.e. a maximal set of disjoint curves. Such $\lambda$ are seemingly irrelevant for the discussion above, because they are not in the locus where Mirzakhani's semi-conjugacy is defined. Indeed, because such $\lambda$ can't have a fully supported transverse measure, they can't arise as the  horizontal lamination of a quadratic differential. But the map $F_\lambda$ is defined for any $\lambda$ maximal, and considering this case will provide insight. 

We can glue together two topological ideal triangles, i.e. triangles minus their vertices, to get a sphere minus three points, as in Figure \ref{F:TopPants}. 

\begin{figure}[h!]
\includegraphics[width=.25\linewidth]{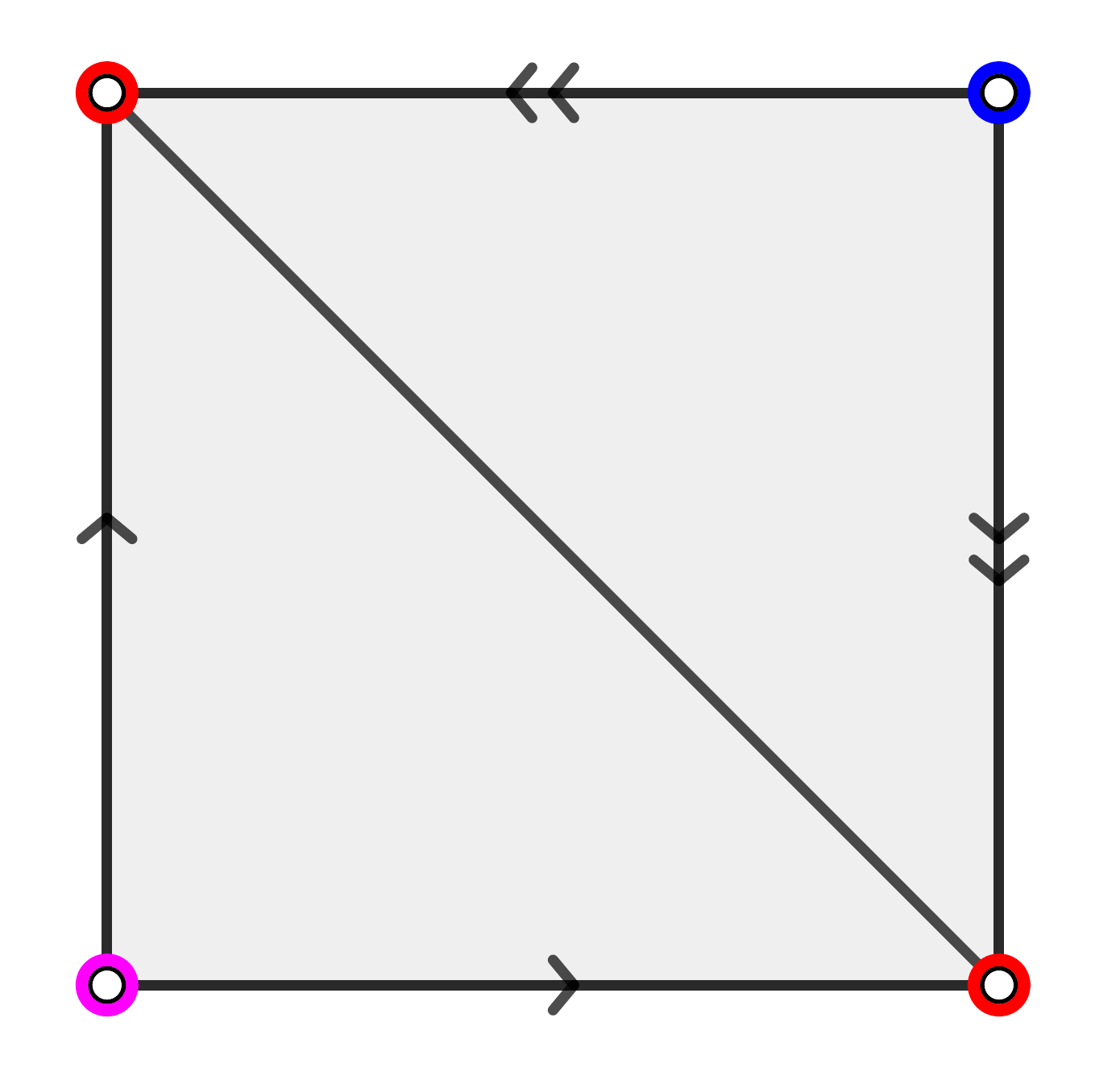}
\caption{A sphere minus three points can be obtained by gluing two triangles minus their vertices. }
\label{F:TopPants}
\end{figure}

Let us consider gluing together two ideal hyperbolic triangles along isometries of their edges. We'll glue in the same pattern, so we know that the result will topologically be a sphere minus three points, which topologically is the same thing as a sphere minus three discs. The result will have a hyperbolic metric, but this metric might be incomplete. There are three parameters, the three shears, that we'll denote $s_1, s_2, s_3$. Each shear is the distance between ``center points" of edges that are glued together, in the usual way.  

\begin{figure}[h!]
\includegraphics[width=.35\linewidth]{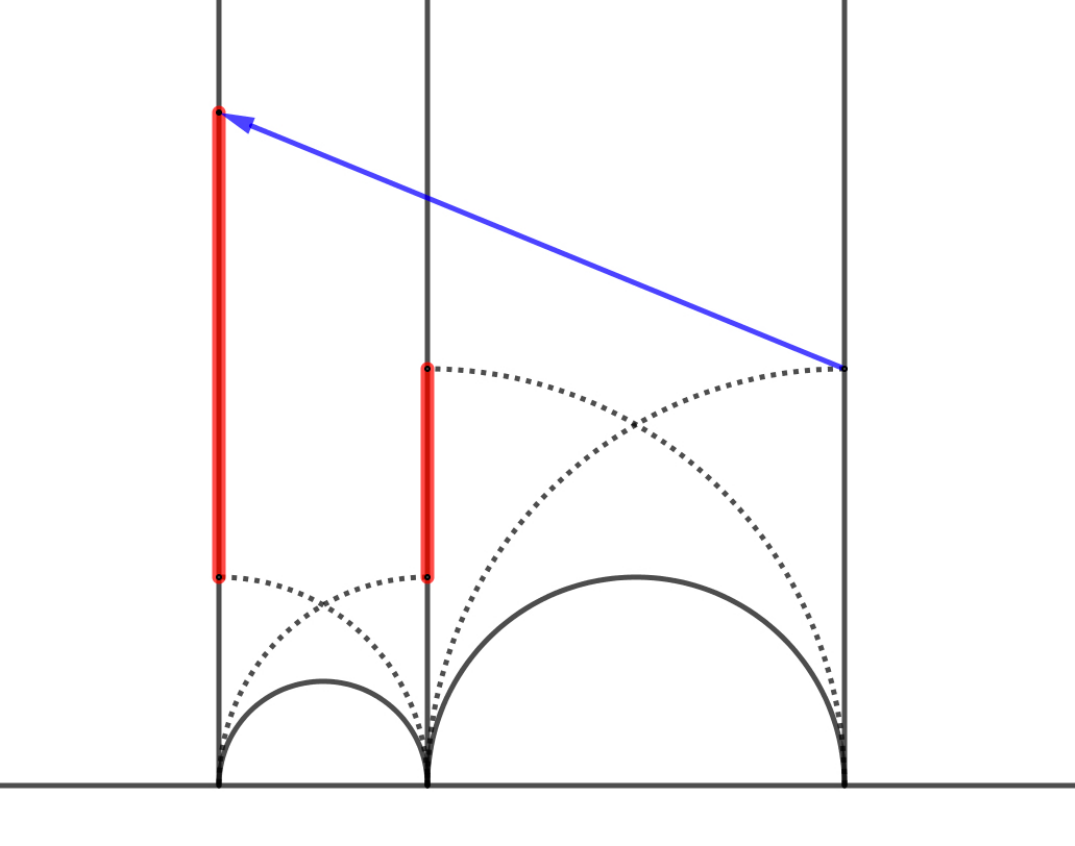}
\caption{Here the shears are shown in red, and have opposite signs. The left and right geodesics are glued by a hyperbolic isometry that takes the basepoint of the blue arrow to its tip.}
\label{F:TwoCusps}
\end{figure}

If you follow the horospherical foliation around a puncture, passing through both triangles, you arrive further out along the edge of the triangle by an amount equal to the sum of the shears, say $|s_1+s_2|$. See Figure \ref{F:TwoCusps}. You can then complete this horospherical path to a loop by traveling this $|s_1+s_2|$ along the geodesic. As you slide this path farther out along the cusps of the triangles, the distance traveled in the horospherical part of this loop goes to zero, so this loop seems to be converging to a geodesic of length $|s_1+s_2|$. 

\begin{lem}
The completion of the surface obtained by gluing together two triangles as above is a pair of pants with boundary geodesics of length $|s_1+s_2|, |s_2+s_3|, |s_3+s_1|$. If any of these three quantities are zero, you instead get a cusp. 
\end{lem}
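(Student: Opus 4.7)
The plan is to develop the gluing explicitly in the upper half-plane $\bH$ and track a horocyclic loop around one of the three punctures, making quantitative the intuition described just before the lemma statement; the other two punctures are handled identically by symmetry. Place the first triangle $T$ with vertices $v_1=0$, $v_2=1$, $v_3=\infty$, and study the puncture at $v_3=\infty$. The edges meeting there are $e_1=\{x=1\}$, with shear $s_1$, and $e_2=\{x=0\}$, with shear $s_2$. The distinguished points of $e_1$ and $e_2$ in $T$ (the feet of perpendiculars from the opposite vertices) are $(1,1)$ and $(0,1)$, and developing $T'$ across $e_1$ with shear $s_1$ places $T'$ in $\bH$ with vertices $1$, $1+e^{s_1}$, $\infty$; the distinguished point of $e_1$ in $T'$ is then $(1,e^{s_1})$, at signed hyperbolic distance $s_1$ above the distinguished point of $e_1$ in $T$, as required.

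For each large $c$, consider the loop around the puncture formed as follows. Run along the horocycle $\{y=c\}$ in $T$ from $(0,c)$ to $(1,c)$: a horocyclic arc of hyperbolic length $1/c$. Continue the horocycle into $T'$ from $(1,c)$ to $(1+e^{s_1},c)$: hyperbolic length $e^{s_1}/c$. The endpoint $(1+e^{s_1},c)$ lies on $e_2'$ at signed hyperbolic distance $\log c-s_1$ above the distinguished point $(1+e^{s_1},e^{s_1})$; the shear-$s_2$ identification of $e_2'\subset T'$ with $e_2\subset T$ sends it to the point of $e_2$ at signed distance $\log c-s_1-s_2$ above $(0,1)$, namely $(0,c\,e^{-s_1-s_2})$. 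Close the loop by a vertical geodesic segment on $e_2$ from $(0,c\,e^{-s_1-s_2})$ to $(0,c)$, of hyperbolic length $|s_1+s_2|$.

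As $c\to\infty$ the two horocyclic contributions total $(1+e^{s_1})/c\to 0$, while the closing segment keeps length $|s_1+s_2|$; the loops therefore converge to a closed curve of length $|s_1+s_2|$ in the metric completion, which must be the boundary geodesic of this end of the pair of pants. When $s_1+s_2=0$ the closing segment disappears, the horocyclic loops close up on their own, and their lengths shrink to $0$, so the completion at this puncture is a cusp instead. Repeating the argument at $v_1$ and $v_2$ produces boundary lengths $|s_2+s_3|$ and $|s_3+s_1|$, proving the lemma.

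The only non-routine step is the shear identification, which must correctly convert signed distance from the distinguished point on $e_2'$ into signed distance from the distinguished point on $e_2$ using the sign conventions implicit in the definition of shear. A good cross-check is to compute the monodromy of this puncture directly as a M\"obius transformation: a short developing computation gives $\gamma(z)=e^{s_1+s_2}z+(1+e^{s_1})$, whose translation length is $|s_1+s_2|$ and which is parabolic precisely when $s_1+s_2=0$, confirming both the length identification and the dichotomy between cusp and funnel. Everything else is routine hyperbolic length bookkeeping in $\bH$.
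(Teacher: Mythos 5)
Your computation is correct and follows essentially the same route as the paper, which only sketches the horocyclic-loop heuristic and defers the careful developing-map argument to its reference: you make that heuristic quantitative in $\bH$, and your holonomy cross-check $\gamma(z)=e^{s_1+s_2}z+(1+e^{s_1})$ is exactly the deferred developing-map proof. The one soft spot in the loop argument alone --- asserting that the shrinking loops converge to \emph{the} boundary geodesic presupposes the completion structure being established --- is repaired by that holonomy computation, since the translation length $|s_1+s_2|$ of the hyperbolic holonomy (parabolic precisely when $s_1+s_2=0$) gives both the boundary length and the cusp dichotomy.
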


For a very careful and clear proof, which proceeds using the developing map rather than the informal heuristic we have suggested, see \cite[Section 7.4]{Mar}. 

\begin{figure}[h!]
\includegraphics[width=.35\linewidth]{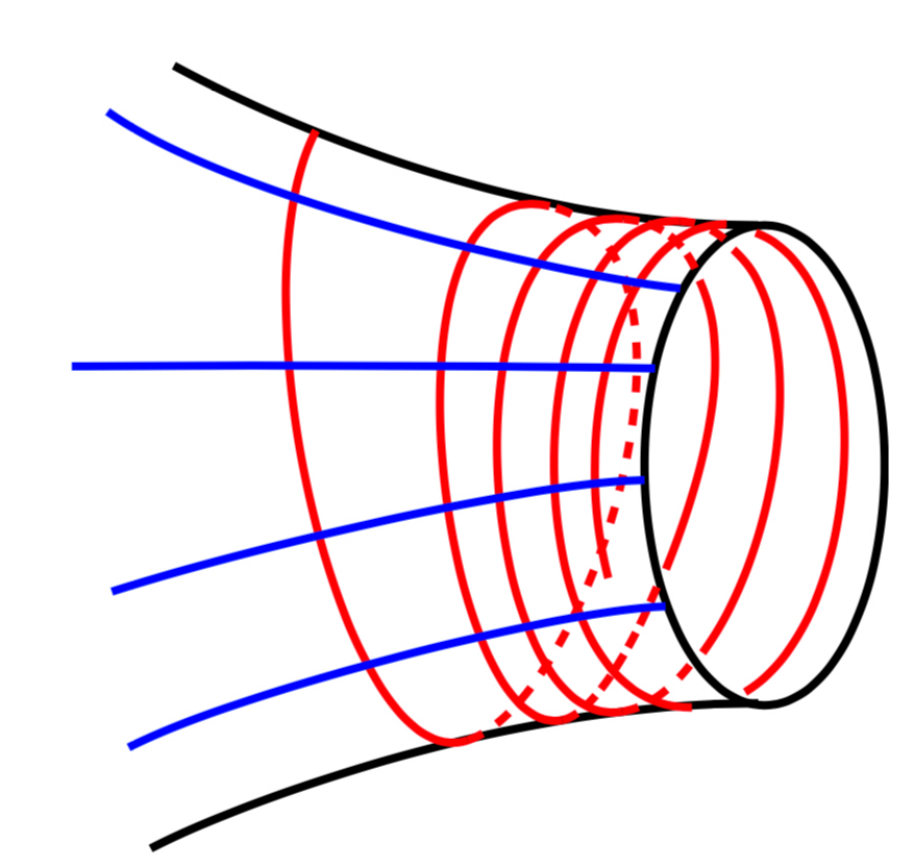}
\caption{Each cusp of each triangle spirals towards one of the three boundary curves of the pants. Image from \cite[Figure 7.20]{Mar}.}
\label{F:Spiral}
\end{figure}

\begin{rem}
As we linearly interpolate between $(s_1, s_2, s_3)$ and $(-s_1, -s_2, -s_3)$, at the halfway point $(0,0,0)$ each boundary component will reach zero length and become a cusp. On one side of the interpolation the triangles will spiral towards the boundary component in one direction, and on the other side they will spiral in the other direction. 
\end{rem}

Return to the situation of a maximal lamination $\lambda$ containing a pants decomposition $\cP$. The $s_i$ above are some of the shear coordinates for $\cT_g$. One also requires shear coordinates for small arcs $A$ passing through a boundary of a pants. This shear coordinate is directly seen to be similar to a Fenchel-Nielsen twist parameter, in that if we do a Fenchel-Nielsen twist by $\e$, the shear changes by $\e$. 

The  Fenchel-Nielsen twist is presumably not the exact same thing as the shear of the arc $A$, even up to a constant.  This is because Fenchel-Nielsen twist parameters are usually defined by considering orthogeodesics from another boundary of the pants. The shear is related to where the central leaves of the horocyclic foliation lands on the cuff. The shear for $A$ should be a function of the Fenchel-Nielsen twist parameter of that curve, and the 5 length parameters for the 2 pants that share this cuff. In this way one can at least see that the map from Fenchel-Nielsen twist parameters to the shear parameters preserves volume, because its derivative can be written as an upper triangular matrix with ones on the diagonal.

\section{Hamiltonian flows}\label{S:Ham}

In fact, both the Thurston and Weil-Petersson volume forms arise from symplectic forms. (Although it is a little tricky to talk about symplectic forms since $\ML$ doesn't have a natural differential structure.) Bonahon-Sozen actually showed that the map from $\cT_g$ to $\ML_\lambda$ is a symplectomorphism. The earthquake flow on $\cT_{g}$ is the Hamiltonian flow for the length of $\lambda$, and the unipotent flow on quadratic differentials with horizontal foliation $\lambda$ is the Hamiltonian flow of the area function. 

Consider a specific $\mu$, and let $\hat{X}$ denote the double cover associated to $q_{\lambda, \mu}$.  We can associate $\lambda$ and $\mu$ to cohomology classes $\hat{\lambda}$ and $\hat{\mu}$, and the area function $A$ is given by 
$$A(\eta)=\langle\hat{\lambda}, \eta\rangle.$$
We now claim that the Hamiltonian vector field is $\hat{\lambda}$. To show this, we compute 
\begin{eqnarray*}
(dA)_\eta(\xi) &=& \left.\frac{d}{dt}\right\vert_{t=0} \langle \hat{\lambda}, \eta+t\xi \rangle \\
&=& \langle \hat{\lambda}, \xi \rangle.
\end{eqnarray*}
This exactly shows that unipotent flow is Hamiltonian. 

\section{The linear structure on $\ML_\alpha$}

If $\alpha$ is maximal, then all foliations  $\mu\in\ML_\alpha$ have singularities in correspondence to the triangular regions of $\alpha$. Hence one can pass consistently to a double cover where $\hat{\mu}$ gives a cohomology class $[\hat{\mu}]$. This maps $\ML_\alpha$ to a vector space. 

\begin{lem}
The map $\mu\mapsto [\hat{\mu}]$ is injective. 
\end{lem}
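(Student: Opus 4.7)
The strategy is to identify $[\hat\mu]$ with the periods of an abelian differential on $\hat X$ and invoke the injectivity of period coordinates on a stratum.

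First I would fix the cover. Maximality of $\alpha$ forces the three-pronged singularities of every $\mu\in\ML_\alpha$ to lie at the centers $\Sigma$ of the complementary triangles of $\alpha$; this set depends only on $\alpha$, not on $\mu$. Let $\pi\colon\hat X\to X$ be the double cover branched over $\Sigma$, with deck involution $\iota$ that fixes $\hat\Sigma:=\pi^{-1}(\Sigma)$ pointwise. For every $\mu$ the pullback is an orientable measured foliation realized by a closed $1$-form $\hat\mu$ with $\iota^{*}\hat\mu=-\hat\mu$, and the task is to show that the class $[\hat\mu]\in H^{1}(\hat X;\bR)^{-}$ determines $\mu$.

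Next I would translate to periods. By Theorem~\ref{T:QDMF} the pair $(\alpha,\mu)$ determines a unique quadratic differential $q_\mu=q(\alpha,\mu)$, and the pullback $\pi^{*}q_\mu$ is $\omega_\mu^{2}$ for a unique $\iota$-anti-invariant abelian differential $\omega_\mu$ on $\hat X$, with $[\operatorname{Re}\omega_\mu]=[\hat\mu]$ and $[\operatorname{Im}\omega_\mu]=[\hat\alpha]$ (the latter fixed because $\alpha$ is fixed). Hence $[\hat\mu]$ determines the absolute period class $[\omega_\mu]\in H^{1}(\hat X;\bC)^{-}$. Because $\iota$ acts trivially on $H^{0}(\hat\Sigma)$, the long exact sequence of the pair $(\hat X,\hat\Sigma)$ restricted to the $-1$ eigenspace gives an isomorphism $H^{1}(\hat X,\hat\Sigma;\bR)^{-}\cong H^{1}(\hat X;\bR)^{-}$, so $[\hat\mu]$ in fact determines the relative period class of $\omega_\mu$.

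To conclude I would invoke the standard injectivity of period coordinates on each connected component of the marked stratum of $\iota$-anti-invariant abelian differentials on $\hat X$. The family $\{\omega_\mu\}_{\mu\in\ML_\alpha}$ depends continuously on $\mu\in\ML_\alpha\cong\cT_g$ via Theorems~\ref{T:shear} and~\ref{T:QDMF}, so it lies in a single component, and period coordinate injectivity yields that $[\omega_\mu]$ determines $\omega_\mu$, hence $q_\mu$, hence $\mu$. Thus $[\hat\mu_1]=[\hat\mu_2]$ forces $\mu_1=\mu_2$. The main obstacle is the absolute-versus-relative cohomology bookkeeping in the second step: $[\hat\mu]$ naturally lives in absolute cohomology whereas period coordinates on strata are stated in relative cohomology. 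The resolution, that the two coincide in the $-1$ eigenspace because $\iota$ fixes each branch point pointwise, is short but is the step most easily overlooked.
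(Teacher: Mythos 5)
Your setup is fine and matches the paper's parenthetical remark: the branch points of the orientation double cover are fixed by the involution, so $H^0(\hat\Sigma)^-=0$ and the long exact sequence of the pair identifies $H^1(\hat X,\hat\Sigma;\bR)^-$ with $H^1(\hat X;\bR)^-$, so $[\hat\mu]$ (together with the fixed class $[\hat\alpha]$) does determine the relative period class of $\omega_\mu$. The gap is in the last step. ``Injectivity of period coordinates on a connected component of the marked stratum'' is not a standard fact: period coordinates are only \emph{local} homeomorphisms on strata, the global period map on a component of a stratum (even over Teichm\"uller space) is not known to be injective and is believed not to be in general, and in any case local injectivity along a connected family $\{\omega_\mu\}$ does not give injectivity of $\mu\mapsto[\omega_\mu]$. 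Worse, the statement you would actually need --- that the period map is injective on the slice of a stratum where the horizontal foliation is held fixed --- is exactly what the lemma asserts (see the remark immediately following it in the paper), so invoking it is circular.

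What is missing is the reconstruction argument, which is the entire content of the paper's proof. Because the horizontal foliation $\hat\alpha$ is fixed as an actual measured foliation (not just a cohomology class), it determines the interval exchange transformation arising as the first return map to any vertical transversal, including its combinatorics and interval lengths. The class $[\hat\mu]$ then supplies the remaining data --- the heights of the rectangles in the associated zippered rectangle decomposition --- and Veech's construction rebuilds the abelian differential, hence $q_\mu$, hence $\mu$, from this data. This is the step your proposal replaces with an appeal to a non-theorem; supplying it (or citing the Minsky--Weiss treatment, as the paper does) is what the proof requires.
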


\begin{proof}
This is equivalent to the statement that if you know the horizontal foliation of an Abelian differential (up to Whitehead moves), and you know the relative cohomology class of the vertical foliation, then you can recover the Abelian differential. (Actually $\hat{\mu}$ lives in the $-1$ eigenspace of the cohomology of the double cover, which is isomorphic to the $-1$ eigenspace of the relative cohomology.) The proof is that knowing the horizontal foliation allows you to determine the IET giving the first return map to any vertical segment, and the cohomology class gives the sizes of the rectangles in an associated zippered rectangle decomposition. See  \cite{MW}. 
\end{proof}

\begin{rem}
This can be interpreted as saying that, passing to the appropriate Teichm\"uller space, a single period coordinate chart suffices for the slice of any stratum where the horizontal foliation is held constant. 
\end{rem}

In fact one can see that the image is a convex polyhedral cone. Typically (ex if $\alpha$ is uniquely ergodic) this cone is a half space. 

Alternatively, one can parameterize  $\ML_\alpha$ by transverse distributions or transverse cocycles, and in this way see that $\ML_\alpha$ has a natural linear structure \cite{B}. Any two points in $\ML_\alpha$ can be joined by a straight line, and the resulting path in $\cT_g$ is called a cataclysm or shear map. It differs from an earthquake in that earthquakes always shear in one direction (right or left), and that earthquakes can be continued for all time, whereas cataclysm paths can cease to be well-defined in finite time. 

\section{Other results on earthquakes}

Thurston proved that, given any two points in $\cT_{g}$, there is a unique earthquake path between them \cite{Thurston2D}. Kerckhoff proved that hyperbolic length functions are convex along earthquake (and even cataclysm paths \cite{Th}).  This was famously used by Kerckhoff to solve the Nielsen realization problem \cite{K}.

\bibliography{earthquakes}{}
\bibliographystyle{amsalpha}
\end{document}